\numberwithin{equation}{section}
\pgfplotsset{compat=1.15}
\tikzset{
line/.style={very thick, color=black},
marker/.style={very thick, color=black,only marks,mark=*,mark size=2},
}
\pgfplotsset{emptydot/.style={marker,fill=white}} 
\pgfplotsset{soliddot/.style={marker}}
\pgfplotsset{nice axis/.style={%
%         scale = .5,% 
        axis x line=middle,%
        axis y line=middle,% 
        axis equal, %
        xlabel style={anchor=west},
        ylabel style={anchor=east},
        ticks=none, 
        xlabel={}, 
        ylabel={}, 
    %         xmin=0        
%         xmax=0.1E8, 
%         xlabel=$x$,%
%         ymin=0        
%         ymax=0.1E8, 
%         ylabel=$y$,%
        }}
\tikzset{>=latex}
\newtheorem{theorem}{Theorem}[section]
\newtheorem{proposition}[theorem]{Proposition}
\newtheorem{lemma}[theorem]{Lemma}
\newtheorem{corollary}[theorem]{Corollary}
\theoremstyle{remark}
\newtheorem{remark}[theorem]{Remark}
\theoremstyle{definition}
\newtheorem{example}[theorem]{Example}
\newtheorem{definition}[theorem]{Definition}
\numberwithin{equation}{section}
\newcommand{\N}{\mathbb{N}}
\newcommand{\R}{\mathbb{R}}
\newcommand{\dom}{{\mathrm{dom}}}
\newcommand{\closure}[1]{\overline{#1}}
\newcommand{\norm}[1]{\left\Vert #1 \right\Vert}
\newcommand{\abs}[1]{\left\vert #1 \right\vert}
\renewcommand{\H}{\mathcal{H}}
\newcommand{\X}{\mathcal{X}}
\renewcommand{\div}{\operatorname{div}}
\DeclareMathOperator{\argmin}{arg\,min}
\DeclareMathOperator{\argmax}{arg\,max}
\DeclareMathOperator{\esssup}{ess\,sup}
\newcommand{\st}{\,:\,}
\newcommand{\supp}{\operatorname{supp}}
\newcommand{\dx}{\,\mathrm{d}x}
\renewcommand{\d}{\,\mathrm{d}}
\newcommand{\sign}{\operatorname{sign}}
\newcommand{\calN}{\mathcal{N}}
\newcommand{\calM}{\mathcal{M}}
\newcommand{\calP}{\mathcal{P}}
\newcommand{\calR}{\mathcal{R}}
\newcommand{\calD}{\mathcal{D}}
\newcommand{\eps}{\varepsilon}
\newcommand{\kr}{\mathrm{KR}}
\newcommand{\sg}{\mu}
\newcommand{\dist}{\operatorname{dist}}
\newcommand{\lip}{\mathrm{Lip}}
\newcommand{\C}{\mathrm{C}}
\renewcommand{\L}{\mathrm{L}}
\newcommand{\W}{\mathrm{W}}
\newcommand{\Omegamax}{\Omega_{\max}}
\newcommand{\omegamax}{\omega_{\max}}
\newcommand{\Omegain}{\Omega_\mathrm{in}}
\newcommand{\din}{d_\mathrm{in}}
\newcommand{\inradius}{r_\Omega}
\newcommand{\highridge}{\calR_\Omega}
\newcommand{\M}{\mathcal M}
\newcommand{\grad}{\nabla}
\newcommand{\defeq}{:=}
\newsavebox{\@brx}
\newcommand{\llangle}[1][]{\savebox{\@brx}{\(\m@th{#1\langle}\)}%
  \mathopen{\copy\@brx\kern-0.5\wd\@brx\usebox{\@brx}}}
\newcommand{\rrangle}[1][]{\savebox{\@brx}{\(\m@th{#1\rangle}\)}%
  \mathclose{\copy\@brx\kern-0.5\wd\@brx\usebox{\@brx}}}
\newcommand{\restr}{\mathbin{\vrule height 1.6ex depth 0pt width
0.13ex\vrule height 0.13ex depth 0pt width 1.3ex}}
\newcommand{\wsto}{\rightharpoonup^\ast}
\newcommand{\RI}{(-\infty,+\infty]}
\let\blx@rerun@biber\relax
\begin{document}

\title{Eigenvalue Problems in $\mathrm{L}^\infty$: Optimality Conditions, Duality, and Relations with Optimal Transport}
\author{Leon Bungert\thanks{Hausdorff Center for Mathematics, University of Bonn, Endenicher Allee 62, Villa Maria, 53115 Bonn, Germany. \href{mailto:leon.bungert@hcm.uni-bonn.de}{leon.bungert@hcm.uni-bonn.de}} 
\and Yury Korolev\thanks{Department of Mathematical Sciences, University of Bath, Claverton Down BA2 7AY, UK. \href{mailto:ymk30@bath.ac.uk}{ymk30@bath.ac.uk}}}
\date{}
\maketitle

\begin{abstract}
In this article we characterize the $\mathrm{L}^\infty$ eigenvalue problem associated to the Rayleigh quotient $\left.{\|\nabla u\|_{\mathrm{L}^\infty}}\middle/{\|u\|_\infty}\right.$ and relate it to a divergence-form PDE, similarly to what is known for $\mathrm{L}^p$ eigenvalue problems and the $p$-Laplacian for $p<\infty$. Contrary to existing methods, which study $\mathrm{L}^\infty$-problems as limits of $\mathrm{L}^p$-problems for $p\to\infty$, we develop a novel framework for analyzing the limiting problem directly using  convex analysis and geometric measure theory. 
For this, we derive a novel fine characterization of the subdifferential of the Lipschitz-constant-functional $u\mapsto\|\nabla u\|_{\mathrm{L}^\infty}$.
{We show that the eigenvalue problem takes the form $\lambda \nu u =-\operatorname{div}(\tau\nabla_\tau u)$, where $\nu$ and $\tau$ are non-negative measures concentrated where $|u|$ respectively $|\nabla u|$ are maximal, and $\nabla_\tau u$ is the tangential gradient of $u$ with respect to $\tau$.
Lastly, we investigate a dual Rayleigh quotient whose minimizers solve an optimal transport problem associated to a generalized Kantorovich--Rubinstein norm.
Our results apply to all stationary points} of the Rayleigh quotient, including infinity ground states, infinity harmonic potentials, distance functions, etc., and generalize known results in the literature.

{\bf Keywords:} Nonlinear eigenvalue problem, L infinity, Subdifferential, Lipschitz constant, Divergence-measure fields, Infinity Laplacian, Optimal transport

{\bf AMS Subject Classification:} 26A16, 35P30, 46N10, 47J10, 49R05   
\end{abstract}

\tableofcontents

\section{Introduction}

\subsection{{Motivation and Main Contributions}}
{
Nonlinear eigenvalue problems for the $p$-Laplacian for $p<\infty$ have been the subject of extensive research for the last three decades---see \cite{garcia1987existence,lindqvist1990equation,le2006eigenvalue,kawohl2003isoperimetric,kawohl2017geometry,kawohl2006positive,kawohl2007dirichlet,kawohl2008p} for a non-exhaustive list---and have applications in data science \cite{buhler2009spectral,hein2010inverse}.
They can be  characterized as solutions of a nonlinear divergence-form PDE or as unique minimizers of a Rayleigh quotient involving the $p$-Dirichlet energy (we refer to this as the $\L^p$ eigenvalue problem). 
For $p=\infty$ minimizers of the Rayleigh quotient, now involving the Lipschitz constant, are no longer unique. We refer to this problem as the $\L^\infty$ eigenvalue problem. 
A certain class of minimizers, called infinity ground states, can be recovered as limits of $p$-Laplacian eigenfunctions as $p\to\infty$. General minimizers, however, do not admit such a variational principle. 

In this paper we develop a novel analytical framework for studying the $\L^\infty$ eigenvalue problem which does not require taking the limit $p\to\infty$ and instead uses techniques from convex analysis and geometric measure theory. This allows us to generalize various known results about special classes of minimizers and extend them to general minimizers.

Let us fix some notation. Let $\Omega\subset\R^n$ be a bounded domain.
For $p\in[1,\infty]$, we denote the $\L^p$-spaces with respect to a measure $\mu$ as $\L^p_\mu(\Omega)$ and we write simply $\L^p(\Omega)$ when $\mu$ is the Lebesgue measure.  
These spaces are equipped with  standard $\L^p$-norms $\norm{u}_{\L^p_\mu}$ or $\norm{u}_{\L^p}$, where we omit the dependency on $\Omega$ for the sake of a compact notation. 
For $p\in[1,\infty)$, the Sobolev space $\W^{1,p}_0(\Omega)$ is defined as the closure of the space of smooth and compactly supported functions with respect to the norm $\norm{u}_{\W^{1,p}}:=\norm{u}_{\L^p}+\norm{\grad u}_{\L^p}$.

The eigenvalue problem of the $p$-Laplacian {(see~\cite{lindqvist1990equation} for a detailed study)} consists in finding a function $u\in\W^{1,p}_0(\Omega)$ which is a weak solution of
\begin{align}\label{eq:p-groundstate}
    \lambda_p^p \abs{u}^{p-2}u=-\div(\abs{\grad u}^{p-2}\grad u).
\end{align}}
The eigenvalue $\lambda_p>0$ is given by the minimal value of a nonlinear Rayleigh quotient
\begin{align}\label{eq:p-eigenvalue}
    \lambda_p\defeq \inf_{u\in\W^{1,p}_0(\Omega)}\frac{\norm{\nabla u}_{\L^p}}{\norm{u}_{\L^p}}.
\end{align}
Solutions of the $p$-Laplacian eigenvalue problem \labelcref{eq:p-groundstate} for $p>1$ are known to be unique {up to normalization} and are in one-to-one correspondence with minimizers of the Rayleigh quotient in~\labelcref{eq:p-eigenvalue}.

{In this paper we study the following limiting minimization problem of an $\L^\infty$ Rayleigh quotient over $\lip_0(\Omega)$, the space of Lipschitz functions on $\Omega$ which are zero on the boundary:
\begin{align}\label{eq:infty-eigenvalue}
    \lambda_\infty\defeq \inf_{u\in\lip_0(\Omega)}\frac{\norm{\nabla u}_{\L^\infty}}{\norm{u}_{\L^\infty}}.
\end{align}
We denote by $\inradius>0$  the inradius of $\Omega$, defined as maximal value of the distance function:
\begin{align}
    \label{eq:distance_fct}
    d_\Omega(x) &\defeq  \dist(x,\partial\Omega) \defeq  \min_{y\in\partial\Omega}\abs{x-y},\\
    \label{eq:inradius}
    \inradius&\defeq \max_{x\in\Omega}d_\Omega(x).
\end{align}
It is very easy to show \cite{juutinen1999eigenvalue} that the infimal value in \labelcref{eq:infty-eigenvalue} is given by
\begin{align}\label{eq:infty-eigenvalue_inradius}
\lambda_\infty = \lim_{p\to\infty}\lambda_p = \frac{1}{\inradius},
\end{align}
which implies that the distance function is always a minimizer of the Rayleigh quotient. 

It has been shown in~\cite{champion2008infty,katzourakis2021generalised,evans2005various} that certain classes of minimizers of~\eqref{eq:infty-eigenvalue} satisfy a divergence-form PDE which is structurally similar to~\eqref{eq:p-groundstate}.
Furthermore, a connection between infinity ground states  and solutions of a certain optimal transport problem was established in  \cite{champion2008infty}.

In this paper we ask  the following questions:
\begin{enumerate}
    \item Do all solutions of the nonlinear eigenvalue problem associated to the Rayleigh quotient ${\norm{\grad u}_{\L^\infty}}/{\norm{u}_{\infty}}$ satisfy a PDE which is structurally similar to the $p$-Laplacian eigenvalue problem $\lambda \abs{u}^{p-2}u=-\div(\abs{\grad u}^{p-2}\grad u)$?
    \item Can all minimizers of the Rayleigh quotient ${\norm{\grad u}_{\L^\infty}}/{\norm{u}_{\infty}}$ be related to solutions of an optimal transport problem?
\end{enumerate}
The short answer is yes, see the PDE \labelcref{eq:divergence_PDE} and the optimal transport problem \labelcref{eq:nu_OT_problem} below.
To answer these questions we work with general \emph{stationary points} instead of minimizers of the Rayleigh quotient ${\norm{\grad u}_{\L^\infty}}/{\norm{u}_{\infty}}$ for which we derive a nonlinear eigenvalue problem in the form of a divergence PDE. 
Then we shall study minimizers of the Rayleigh quotient which we will relate to the distance function and solutions of an optimal transport problem.

The techniques we use to study the $\L^\infty$ eigenvalue problem are also novel: instead of approximating the $\L^\infty$-problem with $\L^p$-problems and sending $p$ to infinity, we mainly rely on elegant and well-established methods of convex analysis. 
On the one hand, this establishes a new analytical framework to tackle $\L^\infty$-type problems without using viscosity solutions or similar technical concepts from PDE analysis. 
On the other hand, this makes our results more general since the class of minimizers to the $\L^\infty$-problem considered is strictly larger than the class of minimizers which can be approximated with $\L^p$-problems.

Our \textbf{main contributions} are the following:
\begin{itemize}
    \item We develop a novel analytical framework solely based on convex analysis and geometric measure theory which allows us to prove known and novel results for $\L^\infty$-problems without the need to take the technical limit $p\to\infty$.
    \item We derive a nonlinear eigenvalue problem, involving duality maps and subdifferentials, which describes stationary points of the Rayleigh quotient ${\norm{\nabla u}_{\L^\infty}}/{\norm{u}_\infty}$.
    \item We characterize solutions to the eigenvalue problem as solutions to a fully nonlinear PDE in divergence form,
    \begin{align}\label{eq:divergence_PDE}
        \lambda \nu u = -\div(\tau \grad_\tau u),
    \end{align}
    involving non-negative measures $\nu$ and $\tau$ which are concentrated where $|u|$ respectively $|\nabla u|$ are maximal, {and the notion of a \emph{tangential gradient} $\grad_\tau u$ developed in~\cite{bouchitte1997energies}, see also \cite{jimenez:2008, champion2008infty,luvcic2021characterisation}}.
    This is our main result \cref{thm:main}.
    \item We show geometric relations between general minimizers of the Rayleigh quotient, the distance functions to the boundary, and the distance function to a generalized inball.
    \item We derive a dual Rayleigh quotient defined on the space of measures on $\Omega$ and relate it to an optimal transport problem involving a variant of the Kantorovich--Rubinstein norm.
    In particular, \cref{prop:OT_problem} shows that the measure $\mu := \nu \norm{u}_\infty$ solves
    \begin{align}\label{eq:nu_OT_problem}
        \max_{\tilde\mu\in\calP(\Omega)}\inf_{\rho\in\calP(\partial\Omega)} W^1(\tilde\mu,\rho),
    \end{align}
    where $\calP(\Omega)$ and $\calP(\partial\Omega)$ are the spaces of probability measures on $\Omega$ and its boundary $\partial\Omega$, respectively, and $W^1(\cdot,\cdot)$ is the geodesic $1$-Wasserstein distance.
\end{itemize}}

The rest of the paper is organized as follows:
{\cref{sec:special_solutions} discusses special classes of minimizers of the $\L^\infty$ Rayleigh quotient, namely infinity ground states and infinity harmonic potentials.}
In \cref{sec:eigenvalue_banach} we introduce essential concepts from convex analysis and derive general relations of nonlinear eigenvalue problems and Rayleigh quotients {on Banach spaces}.
In particular, we show equivalence between minimizers of the Rayleigh quotient and those of a dual Rayleigh quotient, which is a new result in its own right. 
In \cref{sec:funct_measures} we define {suitable} spaces of {continuous} functions and measures {and their duality relations}.
\cref{sec:main} constitutes the core of our article where we first state our main result and some corollaries, characterizing the $\L^\infty$ eigenvalue problem and minimizers of the Rayleigh quotient, and then characterize subdifferentials to prove the result.
In \cref{sec:distance_fct} we provide some geometric relations between minimizers of the Rayleigh quotient and the distance function.
\cref{sec:ot}, where we investigate a dual Rayleigh quotient and provide an optimal transport characterization of the subgradients of minimizers using Kantorovich--Rubinstein theory, is self-contained and does not utilize the subdifferential characterizations from \cref{sec:main}.
\cref{sec:outlook} concludes the paper with a summary of our results and some open questions.

\subsection{{Special Solutions of the Eigenvalue Problem}}
\label{sec:special_solutions}

{Besides the distance function \labelcref{eq:distance_fct}, which is always a minimizer of the Rayleigh quotient in \labelcref{eq:infty-eigenvalue}, there are two other important classes of minimizers: infinity ground states and infinity harmonic potentials. 
Unless for very specific domains \cite{yu2007some}, these three different classes of minimizers are different.}

{In \cite{juutinen1999eigenvalue} it was shown that in the limit $p\to\infty$} normalized eigenfunctions $u_p\in\W^{1,p}_0(\Omega)$ of the $p$-Laplacian, i.e, solutions of \labelcref{eq:p-groundstate} with $\norm{u_p}_{\L^p}=1$, converge (up to a subsequence) uniformly to a {continuous function $u_\infty$, termed infinity ground state.
Furthermore,} $u_\infty$ is a viscosity solution of {the following PDE, which is structurally completely different from~\labelcref{eq:p-groundstate}:}
\begin{align}\label{eq:infty-groundstate}
    \min(\abs{\nabla u}-\lambda_\infty u,-\Delta_\infty u)=0.
\end{align}
Here $\lambda_\infty$ is given by the reciprocal inradius as in \labelcref{eq:infty-eigenvalue_inradius}, and $\Delta_\infty u:=\langle\grad u, D^2 u\grad u\rangle$ denotes the infinity Laplacian operator, see the seminal work \cite{aronsson2004tour} for a detailed study and \cite{juutinen1999infinity} for intriguing properties.

While every solution to \labelcref{eq:infty-groundstate} is a minimizer of the Rayleigh quotient in \labelcref{eq:infty-eigenvalue}, the converse is not true and there are typically many minimizers which do not solve \labelcref{eq:infty-groundstate}. 
Furthermore, this PDE can have solutions which do not arise as limits of solutions of \eqref{eq:p-groundstate} for $p\to\infty$ and are hence called non-variational ground states, see \cite{hynd2013nonuniqueness} for an example.
Only for a very specific class of domains $\Omega$, namely stadium-like sets as classified in \cite{yu2007some}, these ambiguities do not occur and the distance function is the unique minimizer of the Rayleigh quotient and viscosity solution of the PDE.

Apart from the distance function {and infinity ground states}, another class of minimizers of the Rayleigh quotient are infinity harmonic potentials, defined as solutions to
\begin{align}\label{eq:infinity_harmonic}
    \begin{cases}
    \Delta_\infty u = 0, \qquad & \text{in }\Omega\setminus\highridge,\\
    u = \inradius,\qquad & \text{on }\highridge\\
    u = 0, \qquad & \text{on }\partial\Omega.
    \end{cases}
\end{align}
The set $\highridge\subset\Omega$ is the so-called high ridge of $\Omega$, defined as the set of all points with maximal distance to the boundary:
\begin{align}\label{eq:highridge}
    \highridge&\defeq \argmax_{x\in\Omega}d_\Omega(x).
\end{align}
Also infinity harmonic potentials are in general no infinity ground states; a counterexample on a convex domain can be found in \cite{lindgren2012inftyharmonic}.
For interesting properties of these potentials and their streamlines we refer to \cite{lindgren2021gradient}.

\subsection{Nonlinear Eigenvalue Problems on Banach Spaces}
\label{sec:eigenvalue_banach}

Before we specialize the discussion to $\L^\infty$ eigenvalue problems, this section contains a short primer on nonlinear eigenvalue problems in Banach spaces.
We introduce some important concepts from convex analysis, e.g., subdifferentials and duality maps, introduce nonlinear eigenvalue problems, and discuss their dual versions.
The presentation follows the lines of \cite{Bungert2020,bungert2021gradient}.

We let $\X$ be a Banach space over $\R$ with topological dual space $\X^*$.
The duality product is denoted by $\langle\cdot,\cdot\rangle$ and the norm on $\X^*$ is given by
\begin{align}
    \norm{\sg}_{\X^*}\defeq \sup_{\norm{u}_\X=1}\langle\sg,u\rangle.
\end{align}
\begin{definition}[Subdifferential]
Given a convex functional $J:\X\to\RI$, the subdifferential of $J$ is defined as 
\begin{align}
    \partial J(u)=\{\sg\in\X^*\st J(u)+\langle \sg,v-u\rangle\leq J(v),\;\forall v\in\X\},\quad u\in\X.
\end{align}
\end{definition}
The subdifferential is a generalization of the Frechet derivative for non-differentiable convex functionals.
Geometrically, $\partial J(u)$ contains all slopes such that the linerarization of $J$ in $u$ with this slope lies below the graph of $J$.
By definition, $\partial J(u)$ is a subset of the dual space $\X^*$.

In the context of nonlinear eigenvalue problems, absolutely homogeneous functionals have particular importance since they can be used to formulate a plethora of eigenvalue problems, e.g., associated to linear operators, or nonlinear differential operators like the $p$-Laplacian or the porous medium operator (see, e.g., \cite{bungert2019asymptotic,hynd2017approximation,bungert2019nonlinear}).
\begin{definition}[Absolutely one-homogeneous functionals]
A functional $J:\X\to\RI$ is called \emph{absolutely one-homogeneous}, if
\begin{align}
    J(cu)=|c|J(u),\quad\forall c\in\R,\;u\in\dom(J).
\end{align}
\end{definition}
Since absolutely one-homogeneous functionals are semi-norms on subspaces of $\X$, their subdifferential can be characterized as \cite{benning2013ground,burger2016spectral}
\begin{align}\label{eq:subdiff_1-hom}
    \partial J(u)=\{\sg\in\X^*\st \langle\sg,v\rangle\leq J(v),\;\forall v\in\X,\;\langle\sg,u\rangle=J(u)\}.
\end{align}
For the specific choice $J(\cdot)=\norm{\cdot}_\X$, the subdifferential is better know as duality map, defined as follows:
\begin{definition}[Duality map]
The duality map $\Phi_\X$ of $\X$ is given by
\begin{align}\label{eq:duality_map}
    \Phi_\X(u)=\{\sg\in\X^*\st\norm{\sg}_{\X^*} \leq 1,\;\langle\sg,u\rangle=\norm{u}_\X\},\quad u\in\X.
\end{align}
By the Hahn--Banach theorem $\Phi_\X(u)$ is non-empty for any $u\in\X$.
\end{definition}

We assume without loss of generality that 
\begin{align}
    \calN(J)=\{u\in\X\st J(u)=0\}=\{0\},
\end{align}
which can always be achieved by replacing $\X$ with the quotient space $\X/\calN(J)$, see \cite{bungert2019asymptotic}.
Then we can define a nonlinear Rayleigh quotient
\begin{align}\label{eq:rayleigh}
    R(u)=\frac{J(u)}{\norm{u}_\X},\quad u\in\X\setminus\{0\}
\end{align}
and the minimal value of the Rayleigh quotient is defined as
\begin{align}
    \lambda_{\min}\defeq \inf_{u\in\X\setminus\{0\}}R(u).
\end{align}
Positivity of $\lambda_{\min}$ is equivalent to $J$ being coercive, meaning that there exists $C>0$ such that
\begin{align}\label{eq:coercive}
    C\norm{u}_\X\leq J(u),\quad\forall u\in\X.
\end{align}
In this case, obviously $C=\lambda_{\min}$ is the optimal constant in \labelcref{eq:coercive}. 

Indeed, the minimal value $\lambda_{\min}$ of the Rayleigh quotient can be interpreted as smallest eigenvalue. 
To see this we define a doubly nonlinear eigenvalue problem as follows:
\begin{definition}[Nonlinear eigenvalue problem]
We call $u\in\X$ an eigenvector with eigenvalue $\lambda\in\R$ if 
\begin{align}\label{eq:eigenvalue_prob}
    \lambda\Phi_\X(u)\cap\partial J(u)\neq\emptyset.
\end{align}
\end{definition}
The following proposition---the proof of which is standard and can be found in \cite{benning2013ground} or \cite{bungert2021gradient} in large generality---states that minimizers of $R$ coincide with eigenfunctions with eigenvalue~$\lambda_{\min}$.
\begin{proposition}\label{prop:ground_states}
It holds that $u\in\X$ minimizes $R(u)=J(u)/\norm{u}_\X$ if and only if it satisfies \labelcref{eq:eigenvalue_prob} with $\lambda\defeq \lambda_{\min}$.
Such $u\in\X$ are called \emph{ground states}.
\end{proposition}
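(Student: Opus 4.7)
The plan is to reduce both directions to the characterization of the subdifferential of absolutely one-homogeneous functionals stated in equation~\eqref{eq:subdiff_1-hom}, together with the analogous structure of the duality map $\Phi_\X$ in~\eqref{eq:duality_map}. The key observation is that the inequality defining $\lambda_{\min}$ is exactly the coercivity estimate $\lambda_{\min}\norm{v}_\X \leq J(v)$ for all $v\in\X$, with $\lambda_{\min}$ being the optimal constant.

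First I would handle the forward direction. Assume $u\in\X\setminus\{0\}$ minimizes $R$, so $J(u) = \lambda_{\min}\norm{u}_\X$. By Hahn--Banach, pick any $\sg\in\Phi_\X(u)$, i.e.\ $\norm{\sg}_{\X^*}\leq 1$ and $\langle\sg,u\rangle=\norm{u}_\X$. I claim $\lambda_{\min}\sg\in\partial J(u)$. Using \eqref{eq:subdiff_1-hom}, I must check two things. The equality $\langle\lambda_{\min}\sg,u\rangle = \lambda_{\min}\norm{u}_\X = J(u)$ is immediate from the minimizing property. For the inequality $\langle\lambda_{\min}\sg,v\rangle \leq J(v)$ for all $v\in\X$, I combine $\langle\sg,v\rangle\leq \norm{v}_\X$ (from $\norm{\sg}_{\X^*}\leq 1$) with the coercivity $\lambda_{\min}\norm{v}_\X\leq J(v)$, which holds by definition of $\lambda_{\min}$ as an infimum. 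Thus $\lambda_{\min}\sg\in\lambda_{\min}\Phi_\X(u)\cap\partial J(u)$.

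For the reverse direction, suppose there exists $\sg\in\Phi_\X(u)$ with $\lambda_{\min}\sg\in\partial J(u)$. The ``equality part'' of the characterization~\eqref{eq:subdiff_1-hom} gives $\langle\lambda_{\min}\sg,u\rangle = J(u)$, while the defining property of the duality map yields $\langle\sg,u\rangle=\norm{u}_\X$. Combining these identities we obtain $J(u) = \lambda_{\min}\norm{u}_\X$, so $R(u)=\lambda_{\min}$ and $u$ is a minimizer of the Rayleigh quotient.

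No step here is a real obstacle; the only subtlety is the cleanness of the bridge between $\Phi_\X$ and $\partial J$, which is provided precisely by~\eqref{eq:subdiff_1-hom} and relies crucially on $J$ being absolutely one-homogeneous (so that $\partial J(u)$ is described by a global linear bound plus equality at $u$, exactly mirroring the structure of~\eqref{eq:duality_map}). The argument also implicitly uses that $\lambda_{\min}>0$ (so $u\neq 0$ is meaningful) which is guaranteed as soon as $J$ is coercive.
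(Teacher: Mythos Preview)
Your argument is correct and is precisely the standard proof that the paper alludes to by citing \cite{benning2013ground,bungert2021gradient} rather than writing out itself. The only remark is that your final comment about $\lambda_{\min}>0$ is not actually needed anywhere in the proof: both directions go through verbatim for $\lambda_{\min}=0$ as well, and the assumption $u\neq 0$ is already built into the definition of the Rayleigh quotient.
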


\begin{example}[$p$-Laplacian eigenvalue problem]
Letting $\X=\L^p(\Omega)$ and $J(u)=\norm{\nabla u}_{\L^p}$ if $u\in \W_0^{1,p}(\Omega)$ the eigenvalue problem \labelcref{eq:eigenvalue_prob} is equivalent to the $p$-Laplacian eigenvalue problem
\begin{align*}
    \lambda\abs{u}^{p-2}u = -\Delta_p u.
\end{align*}
\end{example}
We conclude this section with a study of the dual eigenvalue problem to~\labelcref{eq:eigenvalue_prob}. 
For this, we define the \emph{dual functional} of $J$---not to be confused with the convex conjugate---as follows:
\begin{definition}[Dual functional]\label{def:dual_fctl}
Let $J:\X\to\RI$ be absolutely one-homogeneous. 
Then the dual functional $J_*:\X^*\to\RI$ is defined as
\begin{align}
    J_*(\sg) = \sup_{J(u)=1}\langle\sg,u\rangle,\quad\sg\in\X^*.
\end{align}
\end{definition}
Since $J$ is a semi-norm when being absolutely one-homogeneous, the dual functional is nothing but the dual semi-norm, see \cite{bungert2019nonlinear}.
In particular, it is also absolutely one-homogeneous and we can define the dual Rayleigh quotient
\begin{align}
    R_*(\sg) = \frac{\norm{\sg}_{\X^*}}{J_*(\sg)},\quad \sg\in\X^*\setminus\{0\}
\end{align}
with associated dual eigenvalue problem
\begin{align}\label{eq:eigenvalue_prob_dual}
    \lambda \partial J_*(\sg) \cap \Phi_{\X^*}(\sg) \neq \emptyset.
\end{align}
The relation to the primal Rayleigh quotient $R(u)$ and the eigenvalue problem \labelcref{eq:eigenvalue_prob} becomes clear in the following proposition, which states that a solution of the primal problem gives rise to a dual solution.
\begin{proposition}\label{prop:dual_quotient}
It holds that
\begin{align}
    \inf_{u\in\X}R(u) \leq \inf_{\sg\in\X^*}R_*(\sg),
\end{align}
with equality if the left problem admits a minimizer.
If furthermore $u\in\X$ solves \labelcref{eq:eigenvalue_prob} with $\lambda=\lambda_{\min}$, then any $\sg\in\Phi_\X(u)$ with $\lambda_{\min}\sg\in\partial J(u)$ solves~\labelcref{eq:eigenvalue_prob_dual} with $\lambda=\lambda_{\min}$.
\end{proposition}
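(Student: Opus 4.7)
The plan is to prove the inequality first, then deduce equality and the dual eigenvalue statement together. For the inequality, I would fix any nonzero $\sg\in\X^*$ and use the very definition of $J_*$ to extract $u_n$ with $J(u_n)=1$ and $\langle\sg,u_n\rangle\to J_*(\sg)$. Combining with the operator-norm bound $\langle\sg,u_n\rangle\leq\norm{\sg}_{\X^*}\norm{u_n}_\X$ yields
\begin{equation*}
R(u_n)=\frac{1}{\norm{u_n}_\X}\leq\frac{\norm{\sg}_{\X^*}}{\langle\sg,u_n\rangle}\longrightarrow R_*(\sg),
\end{equation*}
so $\inf_u R(u)\leq R_*(\sg)$; taking the infimum over $\sg$ proves the desired inequality.

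For equality, suppose $u^*\in\X$ minimizes $R$. By \cref{prop:ground_states} there exists $\sg_0\in\Phi_\X(u^*)$ with $\lambda_{\min}\sg_0\in\partial J(u^*)$. Applying the characterization \labelcref{eq:subdiff_1-hom} of the subdifferential of the absolutely one-homogeneous $J$ gives both $\langle\sg_0,v\rangle\leq J(v)/\lambda_{\min}$ for all $v\in\X$, whence $J_*(\sg_0)\leq 1/\lambda_{\min}$, and $\langle\sg_0,u^*\rangle=J(u^*)/\lambda_{\min}=\norm{u^*}_\X$. Testing the definition of $J_*$ with $v=u^*/J(u^*)$ gives the reverse bound $J_*(\sg_0)\geq 1/\lambda_{\min}$, and the identity $\langle\sg_0,u^*\rangle=\norm{u^*}_\X$ combined with $\sg_0\in\Phi_\X(u^*)$ forces $\norm{\sg_0}_{\X^*}=1$. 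Together these yield $R_*(\sg_0)=\lambda_{\min}$, which closes the chain and proves equality.

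It remains to show that $\sg_0$ solves the dual eigenvalue problem \labelcref{eq:eigenvalue_prob_dual} with $\lambda=\lambda_{\min}$. I would verify directly that $\hat u\defeq u^*/\norm{u^*}_\X$, embedded canonically into $\X^{**}$, lies in $\lambda_{\min}\partial J_*(\sg_0)\cap\Phi_{\X^*}(\sg_0)$. The membership in $\Phi_{\X^*}(\sg_0)$ is immediate from the isometric embedding ($\norm{\hat u}_{\X^{**}}=1$) and the pairing $\langle\hat u,\sg_0\rangle=1=\norm{\sg_0}_{\X^*}$. The membership $\hat u/\lambda_{\min}=u^*/J(u^*)\in\partial J_*(\sg_0)$ follows from the one-homogeneous subdifferential test \labelcref{eq:subdiff_1-hom} applied to $J_*$: the inequality $\langle\tau,u^*/J(u^*)\rangle\leq J_*(\tau)$ for all $\tau\in\X^*$ is exactly the definition of $J_*$, while the equality $\langle\sg_0,u^*/J(u^*)\rangle=J_*(\sg_0)=1/\lambda_{\min}$ was already computed. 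I do not anticipate a genuine technical obstacle; the main care is in tracking which subdifferential lives in which dual space and in exploiting the primal--dual symmetry between the defining inequality of $J_*$ and the subdifferential test.
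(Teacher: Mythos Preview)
Your proof is correct and follows essentially the same approach as the paper. The only notable difference is in the first step: the paper establishes the inequality more directly by inserting the coercivity bound $\lambda_{\min}\norm{u}_\X\leq J(u)$ into the supremum defining $J_*$, obtaining $J_*(\sg)\leq\tfrac{1}{\lambda_{\min}}\norm{\sg}_{\X^*}$ in one line, whereas you pass through a maximizing sequence (which requires the minor care that $\langle\sg,u_n\rangle>0$ eventually, valid once $J_*(\sg)>0$). For the dual eigenvalue statement the paper simply remarks that the argument mirrors that of \cref{prop:ground_states}, while you spell out the verification that $u^*/\norm{u^*}_\X\in\lambda_{\min}\partial J_*(\sg_0)\cap\Phi_{\X^*}(\sg_0)$ explicitly; this is a welcome clarification rather than a different route.
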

\begin{proof}
Letting $\lambda_{\min}=\inf_u R(u)$ it holds $\lambda_{\min} \norm{u}\leq J(u)$ for all $u\in\X$. 
This implies
\begin{align*}
    J_*(\sg)=\sup_{u\in\X}\frac{\langle\sg,u\rangle}{J(u)}\leq\frac{1}{\lambda_{\min}}\sup_{u\in\X}\frac{\langle\sg,u\rangle}{\norm{u}_\X}=\frac{1}{\lambda_{\min}}\norm{\sg}_{\X^*}
\end{align*}
and hence $\lambda_{\min}\leq \inf_{\sg\in\X^*} R_*(\sg)$.

On the other hand, letting $u\neq 0$ such that $R(u)=\lambda_{\min}$ and $\sg\in\X^*$ such that $\sg\in\Phi_\X(u)$ and $\lambda_{\min}\sg\in\partial J(u)$ we obtain $\norm{\sg}_{\X^*} = 1$ and hence
\begin{align*}
    \inf_{\sg\in\X^*}R_*(\sg)\leq \frac{1}{J_*(\sg)}=\frac{1}{\sup_{u\in\X}\frac{\langle\sg,u\rangle}{J(u)}}\leq \frac{\lambda_{\min}}{\frac{\langle\lambda_{\min}\sg,u\rangle}{J(u)}}=\lambda_{\min}.
\end{align*}
Hence, we have shown $\inf_\sg R_*(\sg)=\lambda_{\min}$ and that $\sg$ is a minimizer of $R_*$. 
Showing that this implies \labelcref{eq:eigenvalue_prob_dual} with $\lambda=\lambda_{\min}$ works just as in the proof of \cref{prop:ground_states}.
\end{proof}
\begin{remark}[Reflexive spaces] 
If $\X$ is reflexive it is easy to see that the dual-dual functional $(J_*)_{*}$ equals $J$ and the same holds for the quotients $(R_*)_{*}=R$. Hence, in this case the eigenvalue problems~\labelcref{eq:eigenvalue_prob} and~\labelcref{eq:eigenvalue_prob_dual} are equivalent in the sense that the subgradients of one problem are solutions to the other problem.
\end{remark}

\subsection{Functions and Measures}
\label{sec:funct_measures}

Having some abstract theory of nonlinear eigenvalue problems in Banach spaces at hand, we now introduce the setup for the $\L^\infty$-type problem that we are studying.

For a bounded domain $\Omega\subset\R^n$  we let $\C_0({\Omega})$ denote the space of all continuous functions on ${\Omega}$ which vanish on $\partial\Omega$.
Equipped with the norm $\norm{u}_\infty \defeq \max_{{\Omega}}|u|$ this is a Banach space.  
We note that $\C_0(\Omega)=\C_0(\closure\Omega)$ and is hence a closed subspace of $\C(\closure\Omega)$.
Its dual space is given by the space of {finite and signed} Radon measures $\calM(\Omega)$ on $\Omega$ equipped with the total variation norm $\norm{\mu}_{\calM(\Omega)}\defeq\abs{\mu}(\Omega)$, and the duality pairing is
\begin{align}
    \langle\mu,u\rangle \defeq  \int_\Omega u\d\mu.
\end{align}
Weak$^*$ convergence of measures $\{\mu_n\}_{n\in\N}\subset\calM(\Omega)$ to $\mu\in\calM(\Omega)$ is denoted by $\mu_n\wsto\mu$ and means $\langle u,\mu_n\rangle\to\langle u,\mu\rangle$ for all $u\in \C_0({\Omega})$.
Because of the weak$^*$ lower semicontinuity of the total variation, one has $\abs{\mu}({\Omega})\leq \liminf_{n\to\infty}\abs{\mu_n}({\Omega})$.

{We denote non-negative measures by $\calM_+(\Omega)$ and abbreviate by $\calP(\Omega)$ the set of probability measures} which consists of all measures $\mu\in\calM_+(\Omega)$ with $\mu(\Omega)=1$.

The space of vector-valued Radon measures on $\Omega$ is denoted as $\calM(\Omega,\R^n)$ and can be equipped with the same notion of convergence.
The so-called divergence-measure fields \cite{silhavy2005divergence,chen2001theory,chen2003extended,chen2005perimeter} constitute an important subclass of vector-valued Radon measures, which will turn out to be essential for studying $\L^\infty$ variational problems.
\begin{definition}[Divergence-measure field]\label{def:divergence-measure}
A measure $\sigma\in\calM(\Omega,\R^n)$ is said to be a divergence-measure field if there is a measure $\mu\in\calM(\Omega)$ such that
\begin{align}
    -\int_\Omega\nabla\varphi\cdot\d\sigma =\int_\Omega\varphi\d\mu,\quad\forall\varphi\in \C^\infty_c(\Omega).
\end{align}
In this case we write $\div\sigma\defeq\mu$ and $\sigma\in\calD\calM(\Omega,\R^n)$.
\end{definition}

We let $\lip(\closure{\Omega})$ denote the space of all Lipschitz continuous functions on $\closure{\Omega}$ and let $\lip_0(\closure{\Omega})$ be the subspace of Lipschitz-functions vanishing on $\partial\Omega$. 
A norm on $\lip(\closure{\Omega})$ is given by $\norm{u}_{\lip(\closure{\Omega})}\defeq \max(\norm{u}_\infty,\lip(u))$, where $\lip(u)$ denotes the Lipschitz constant of $u\in\lip(\closure{\Omega})$.
An equivalent norm on $\lip_0(\closure{\Omega})$ is given by $\norm{u}_{\lip_0({\Omega})}\defeq \lip(u)$ and because of the homogeneous boundary conditions it holds $\lip(u)=\norm{\nabla u}_{\L^\infty}\defeq \esssup_{x\in\Omega}\abs{\nabla u(x)}$. 
We will again simplify our notation using $\lip_0(\Omega)=\lip_0(\closure\Omega)$ and write $\lip_0(\Omega)$ throughout the paper.

Finally, the space of smooth functions on $\Omega$ is denoted by $\C^\infty(\Omega)$ and the subspace of compactly supported test functions by $\C^\infty_c(\Omega)$.

\section{Characterization of the \texorpdfstring{$\L^\infty$}{L-infinity} Eigenvalue Problem}
\label{sec:main}

For a rigorous study of {stationary points of the Rayleigh quotient}
\begin{align}\label{eq:rayleigh_inf_naive}
    {\lip_0(\Omega)\ni u\mapsto}\frac{\norm{\nabla u}_{\L^\infty}}{\norm{u}_{\L^\infty}}
\end{align}
we have to extend the functional $u\mapsto\norm{\nabla u}_{\L^\infty}$ to the Banach space $\C_0(\Omega)$ which lets us apply the abstract results of \cref{sec:eigenvalue_banach}.
For $u\in \C_0(\Omega)$ we therefore define the absolutely one-homogeneous and convex functional
\begin{align}
    J^\infty(u)=\sup\left\lbrace \int_\Omega u\div\sigma \dx\st \sigma\in \C^\infty({\Omega},\R^n),\;\norm{\sigma}_{\L^1}\leq 1 \right\rbrace,\quad u\in \C_0(\Omega),
\end{align}
which satisfies $\dom(J^\infty)= \lip_0(\Omega)$ and can be expressed as
\begin{align*}
    J^\infty(u)=
    \begin{cases}
        \norm{\nabla u}_{\L^\infty},\quad&u\in \lip_0(\Omega),\\
        +\infty,\quad&\text{else}.
    \end{cases}
\end{align*}
Therefore, the Rayleigh quotient \labelcref{eq:rayleigh_inf_naive} {can be replaced by} the Rayleigh quotient
\begin{align}\label{eq:rayleigh_inf}
    R^\infty(u) = \frac{J^\infty(u)}{\norm{u}_\infty},\qquad {u\in \C_0(\Omega).}
\end{align}
The associated abstract eigenvalue problem \labelcref{eq:eigenvalue_prob} becomes
\begin{align}\label{eq:eigenvalue_prob_inf}
    \lambda\Phi_{\C_0(\Omega)}(u)\cap\partial J^\infty(u)\neq\emptyset,
\end{align}
where $\partial J^\infty$ denotes the subdifferential \labelcref{eq:subdiff_1-hom} of $J^\infty$ with respect to $\C_0(\Omega)$.
{In particular, by} \cref{prop:ground_states} minimizers of the Rayleigh quotient \labelcref{eq:rayleigh_inf} are in one-to-one correspondence to solutions of the eigenvalue problem \labelcref{eq:eigenvalue_prob_inf} with eigenvalue $\lambda=\lambda_\infty$.

\subsection{Main Result}

{%
Here we state our main theorem, the proof of which is given at the end of \cref{sec:subdiffs}.
It features some objects which will be rigorously defined later.
For convenience, we outline their meaning here: 

The sets $\omegamax(u)$ and $\Omegamax(u)$ contain the points in $\Omega$ where $\abs{u}$ and $\abs{\nabla u}$ are maximal (in a generalized sense) and will be defined rigorously in \cref{def:omegamax,def:Omegamax}. 
The notions of the tangential gradient $\nabla_{{\tau}}u$ and \v{S}ilhav{\'y}'s pairing measure $\llangle\grad u,\sigma\rrangle$ will be introduced in \cref{def:tangetial-grad,prop:pairing}, respectively.
For now, the tangential gradient $\nabla_{{\tau}}u(x)$ can be thought of as projection of $\nabla u(x)$ onto a linear subspace ``tangential'' to the support of the measure ${\tau}$ at $x$.
Furthermore, the pairing measure $\llangle\grad u,\sigma\rrangle$ coincides with the measure $\langle\grad u, \sigma\rangle$ in case that $\grad u$ is continuous.
}

\begin{theorem}[$\L^\infty$ eigenvalue problem]\label{thm:main}
A function $u\in\C_0(\Omega)\setminus\{0\}$ solves the eigenvalue problem 
\begin{align*}
    \lambda\Phi_{\C_0(\Omega)}(u)\cap\partial J^\infty(u)\neq\emptyset
\end{align*}
if and only if there exist non-negative measures $\nu,\tau\in\calM_+(\Omega)$ such that
\begin{align}\label{eq:eigenfunction_div}
    \lambda \nu u = -\div(\tau\grad_{{\tau}}u)
\end{align}
and they have the following properties:
\begin{itemize}
    \item The measures have mass $\nu(\Omega)=\frac{1}{\norm{u}_\infty}$ and ${\tau}(\Omega)=\frac{1}{\norm{\grad u}_{\L^\infty}}$.
    \item $\nu$ is concentrated on the set $\omegamax(u)$ where $\abs{u}$ is maximal:
    \begin{align*}
        {\nu}(\Omega\setminus\omegamax(u)) = 0.
    \end{align*}
    \item $\tau$ is concentrated on  the set  $\Omegamax(u)$ where $\abs{\nabla u}$ is maximal:
    \begin{align*}
        {\tau}(\Omega\setminus\Omegamax(u))=0.
    \end{align*}
    \item The measure $\sigma:=\tau\grad_{{\tau}}u$ fulfills the following identity for \v{S}ilhav{\'y}'s pairing measure~\cite{vsilhavy2008normal}:
    \begin{align*}
    \llangle\nabla u, \sigma\rrangle &= \norm{\nabla u}_{\L^\infty} \abs{\sigma}.
    \end{align*}
\end{itemize}
\end{theorem}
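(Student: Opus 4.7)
The plan is to decompose the condition $\sg \in \lambda\Phi_{\C_0(\Omega)}(u) \cap \partial J^\infty(u)$ into two independent characterizations---one for the duality map of $\C_0(\Omega)$ and one for the subdifferential of $J^\infty$---and then recombine them into the divergence PDE. The duality-map side is elementary, while the subdifferential side is the technical core of the theorem.

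For the duality map, any $\phi \in \Phi_{\C_0(\Omega)}(u)$ is a signed Radon measure with $\norm{\phi}_{\calM(\Omega)}\leq 1$ and $\langle\phi,u\rangle = \norm{u}_\infty$. The chain $\langle\phi,u\rangle \leq \int\abs{u}\d\abs{\phi} \leq \norm{u}_\infty\abs{\phi}(\Omega)\leq\norm{u}_\infty$ must be saturated, which forces $\abs{\phi}(\Omega)=1$, concentration of $\abs{\phi}$ on $\omegamax(u)$, and $\d\phi = \sign(u)\d\abs{\phi}$. Since $u = \sign(u)\norm{u}_\infty$ on $\omegamax(u)$, the non-negative measure $\nu\defeq\abs{\phi}/\norm{u}_\infty$ has mass $\nu(\Omega)=1/\norm{u}_\infty$, is supported on $\omegamax(u)$, and satisfies $\lambda\phi = \lambda u\nu$ as measures.

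For the subdifferential of $J^\infty$, I would use that $J^\infty$ is (by definition) the support function of the image of the $\L^1$-unit ball of smooth fields under $\sigma\mapsto\div\sigma$. A density and weak$^*$-compactness argument extends this to a supremum over divergence-measure fields $\sigma\in\calD\calM(\Omega,\R^n)$ with $\abs{\sigma}(\Omega)\leq 1$. The general subdifferential formula~\labelcref{eq:subdiff_1-hom} for absolutely one-homogeneous functionals then gives $\sg\in\partial J^\infty(u)$ iff $\sg = -\div\sigma$ for some attaining $\sigma$, i.e.\ $\langle\sg,u\rangle = J^\infty(u) = \norm{\grad u}_{\L^\infty}$. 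Rewriting the pairing $\langle\sg,u\rangle$ via \v{S}ilhav{\'y}'s pairing measure yields $\langle\sg,u\rangle = \llangle\grad u,\sigma\rrangle(\Omega)$, and the universal bound $\abs{\llangle\grad u,\sigma\rrangle}\leq\norm{\grad u}_{\L^\infty}\abs{\sigma}$ together with $\abs{\sigma}(\Omega)\leq 1$ shows that attainment forces $\abs{\sigma}(\Omega)=1$, the pointwise identity $\llangle\grad u,\sigma\rrangle = \norm{\grad u}_{\L^\infty}\abs{\sigma}$ as measures, and concentration of $\abs{\sigma}$ on $\Omegamax(u)$.

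Finally, to reach the displayed form $\sigma = \tau\grad_\tau u$, I would invoke the Bouchitté--Buttazzo--Seppecher tangential gradient theory. Setting $\tau\defeq\abs{\sigma}/\norm{\grad u}_{\L^\infty}$ gives $\tau(\Omega)=1/\norm{\grad u}_{\L^\infty}$ and $\tau$ concentrated on $\Omegamax(u)$; polar-decomposing $\sigma = w\abs{\sigma}$ with $\abs{w}=1$ a.e., the saturated pairing identity forces $w = \grad_\tau u/\norm{\grad u}_{\L^\infty}$, which yields $\sigma = \tau\grad_\tau u$. Combining with the duality-map part produces $\lambda u\nu = \sg = -\div(\tau\grad_\tau u)$, and reversing each equivalence closes the "if" direction. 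The main obstacle is the last identification: rigorously matching the polar density of $\sigma$ with the tangential gradient $\grad_\tau u$, since $\grad u$ only lives in $\L^\infty$ while $\sigma$ is a measure whose support can be lower-dimensional. I would isolate this step in a preliminary fine subdifferential lemma, where the tangential calculus replaces the ill-defined classical product $\grad u \cdot (\sigma/\abs{\sigma})$, and then appeal to it here.
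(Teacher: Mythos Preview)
Your proposal is correct and follows essentially the same architecture as the paper: split the intersection condition into a duality-map characterization (your first paragraph matches \cref{prop:duality_map} exactly) and a subdifferential characterization via divergence-measure fields (your second paragraph matches \cref{prop:subdifferential}), then refine the latter pointwise using both \v{S}ilhav{\'y}'s pairing and the Bouchitt\'e--Buttazzo--Seppecher tangential gradient, and finally read off $\nu=\abs{\phi}/\norm{u}_\infty$ and $\tau=\abs{\sigma}/\norm{\grad u}_{\L^\infty}$.

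One point of ordering worth flagging: you suggest that the saturated \v{S}ilhav{\'y} identity $\llangle\grad u,\sigma\rrangle=\norm{\grad u}_{\L^\infty}\abs{\sigma}$ already forces concentration of $\abs{\sigma}$ on $\Omegamax(u)$. In the paper this is \emph{not} how concentration is obtained; the \v{S}ilhav{\'y} bound $\abs{\llangle\grad u,\sigma\rrangle}(U)\leq\norm{\grad u}_{\L^\infty(U)}\abs{\sigma}(U)$ involves the local essential supremum of $\abs{\grad u}$, which is not a priori tied to the approximating-sequence set $\Omegamax(u)$ of \cref{def:Omegamax}. The paper instead derives both concentration on $\Omegamax(u)$ and the polar decomposition $\sigma=(\grad_{\abs{\sigma}}u/\norm{\grad u}_{\L^\infty})\abs{\sigma}$ in one stroke via the tangential-gradient chain of inequalities in \cref{prop:subdiff-pointwise}, and proves the \v{S}ilhav{\'y} identity separately in \cref{prop:subdiff-pointwise-pair}. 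Since you already plan a ``preliminary fine subdifferential lemma'' built on the tangential calculus, you will recover exactly this; just be aware that the concentration statement should come out of that lemma rather than from \v{S}ilhav{\'y}.
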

Comparing the PDE \labelcref{eq:eigenfunction_div} with the $p$-Laplacian eigenvalue problem \labelcref{eq:p-groundstate} one identifies strong structural similarities.
The singular terms $\abs{u}^{p-2}$ and $\abs{\grad u}^{p-2}$ are replaced by the measures $\nu$ and $\tau$, respectively.
The occurrence of the tangential gradient $\grad_\tau u$ instead of $\grad u$ is due to the fact that our results apply to general solutions of the $\L^\infty$-problem and not only infinity harmonic limits of $\L^p$-problems, where we conjecture that $\grad_\tau u$ coincides with $\grad u$ (see \cref{ex:potentials} below for a partial argument).

Having \cref{thm:main} at hand, we can now reformulate the nonlinear eigenvalue problem
\begin{align*}
    \lambda_\infty\Phi_{\C_0(\Omega)}\cap\partial J^\infty(u)\neq \emptyset,
\end{align*}
which arises as optimality condition for minimizers of the Rayleigh quotient \labelcref{eq:rayleigh_inf}, in a way that strongly resemble the definition of infinity ground states \labelcref{eq:infty-groundstate}.
For comparison we recap that infinity ground states solve
\begin{align*}
    \min\big(|\nabla u|-\lambda_\infty u,-\Delta_\infty u\big)=0.
\end{align*}

\begin{corollary}
Let $u\in \lip_0(\Omega)$ be a non-negative minimizer of the Rayleigh quotient $R^\infty$, meaning that $R^\infty(u)=\lambda_\infty$. Then it holds
\begin{align}\label{eq:min_equ_inf_minimizers}
    \min\big(\norm{\nabla u}_{\L^\infty}-\lambda_\infty u,-\div(\tau\grad_\tau u)\big)=0
\end{align}
in the sense of measures.
\end{corollary}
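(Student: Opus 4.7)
The plan is to read off the corollary directly from \cref{thm:main} applied with $\lambda=\lambda_\infty$, exploiting the concentration properties of the measures $\nu$ and $\tau$ and the fact that $R^\infty(u)=\lambda_\infty$ forces a sharp pointwise inequality between $\lambda_\infty u$ and $\norm{\nabla u}_{\L^\infty}$.

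First I would verify non-negativity of the two ``arguments'' of the minimum. The identity $R^\infty(u)=\lambda_\infty$ reads $\norm{\nabla u}_{\L^\infty}=\lambda_\infty\norm{u}_\infty$. Combined with $0\leq u\leq\norm{u}_\infty$ (here is where non-negativity of $u$ enters), this gives pointwise on $\Omega$
\begin{equation*}
    \norm{\nabla u}_{\L^\infty}-\lambda_\infty u\;\geq\;0,
\end{equation*}
with equality exactly on the set where $u=\norm{u}_\infty$, i.e.\ on $\omegamax(u)$. Hence the first entry is a non-negative continuous function, which we identify with the non-negative measure $(\norm{\nabla u}_{\L^\infty}-\lambda_\infty u)\mathcal{L}^n\restr\Omega$, vanishing precisely on $\omegamax(u)$.

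Next, since minimizers of $R^\infty$ solve the abstract eigenvalue problem \labelcref{eq:eigenvalue_prob_inf} with $\lambda=\lambda_\infty$ by \cref{prop:ground_states}, I would invoke \cref{thm:main} to obtain $\nu,\tau\in\calM_+(\Omega)$ with
\begin{equation*}
    -\div(\tau\grad_\tau u)=\lambda_\infty\,\nu u.
\end{equation*}
Because $\nu\geq 0$, $u\geq 0$ and $\lambda_\infty>0$, the right-hand side is a non-negative measure, so the second entry of the minimum is also non-negative. Furthermore, \cref{thm:main} asserts that $\nu$ is concentrated on $\omegamax(u)$, so the measure $\lambda_\infty\nu u=-\div(\tau\grad_\tau u)$ is supported in $\omegamax(u)$.

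To conclude, I would note that the two non-negative measures in question have disjoint supports: the first lives on $\Omega\setminus\omegamax(u)$ (as it vanishes on $\omegamax(u)$), while the second lives on $\omegamax(u)$. Taking the pointwise minimum of non-negative measures with disjoint supports yields the zero measure, proving \labelcref{eq:min_equ_inf_minimizers}. There is essentially no hard step here; the only point requiring a moment of care is the interpretation of ``$\min$'' as the minimum of two non-negative Borel measures on $\Omega$, and the (elementary) identification, for continuous non-negative $u$, of $\omegamax(u)$ with the closed level set $\{x\in\Omega:u(x)=\norm{u}_\infty\}$, which ensures the zero sets line up as claimed.
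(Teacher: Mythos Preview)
Your proof is correct and follows essentially the same route as the paper: invoke \cref{thm:main} to obtain $-\div(\tau\grad_\tau u)=\lambda_\infty\nu u\geq 0$ concentrated on $\omegamax(u)$, combine with the pointwise inequality $\norm{\nabla u}_{\L^\infty}-\lambda_\infty u\geq 0$ which is an equality on $\omegamax(u)$, and observe that on $\omegamax(u)$ and its complement at least one of the two entries vanishes. A tiny terminological remark: rather than ``disjoint supports'' (topological supports may touch), the precise statement is that the two non-negative measures are mutually singular, which is exactly what you use and what suffices for $\min$ to be the zero measure.
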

\begin{proof}
From \cref{thm:main} we know that $u\geq 0$ solves {$\lambda_\infty\nu u=-\div(\tau\grad_\tau u)$} with a measure ${\nu}\geq 0$ such that ${\nu}(\Omega\setminus\omegamax(u))=0$.
Hence, it holds
\begin{align*}
    -\div{(\tau\grad_\tau u)} = \lambda_\infty{\nu u}
    \begin{cases}
    =0,\quad&\text{in }\Omega\setminus\omegamax(u),\\
    \geq 0,\quad&\text{in }\omegamax(u).
    \end{cases}
\end{align*}
On the other hand, since $u$ minimizes $R^\infty$ we obviously have
\begin{align}
    \norm{\nabla u}_{\L^\infty}-\lambda_\infty u
    \begin{cases}
    \geq 0,\quad&\text{in }\Omega\setminus\omegamax(u),\\
    =0,\quad&\text{in }\omegamax(u).
    \end{cases}
\end{align}
Combining these two equations we obtain \labelcref{eq:min_equ_inf_minimizers}.
\end{proof}
We would like to emphasize that \labelcref{eq:min_equ_inf_minimizers} is strikingly similar to \labelcref{eq:infty-groundstate} for infinity ground states, however, it is valid for {\emph{all non-negative}} minimizers of $R^\infty$. {We will see in \cref{rem:sign-changing-minimizers} below that there may exist global minimizers of $R^\infty$ that change sign.}

\begin{example}[Infinity harmonic potentials]\label{ex:potentials}
Our results can also be applied to infinity harmonic potentials~\labelcref{eq:infinity_harmonic}. 
By definition, these are absolutely minimizing Lipschitz extensions on the open domain $\Omega\setminus\highridge$ and by definition solve
\begin{align}
    \Delta_\infty u = 0\quad\text{on }\Omega\setminus\highridge.
\end{align}
In \cite{evans2005various} (see also \cite{evans1999differential} for a similar result) the authors show that the solution of this equation satisfies the divergence PDE
\begin{align}
    \div(\nu \nabla u) = 0\quad\text{on }\Omega\setminus\highridge,
\end{align}
in a distributional sense.
Here $\nu$ is a non-negative measure concentrated where $\nabla u$ is maximal and it was shown that $\nabla u$ exists on this set.
Applying our \cref{thm:main} shows that
\begin{align}
    \div\left({\tau\nabla_\tau }u\right)=0\quad&\text{on }\Omega\setminus\highridge.
\end{align}
{The following formal argument suggests that one might replace $\grad_\tau u$ by $\grad u$ for an infinity harmonic $u$ so that we recover the result of \cite{evans2005various}. 
The set $\Omegamax(u)$, where $\tau$ is supported,  is a level set of the function $x\mapsto\frac12\abs{\grad u}^2$. 
Assume that $u \in C^2(\Omega)$. Since $u$ is infinity harmonic, it holds that
\begin{align*}
    \langle\grad\frac12\abs{\grad u}^2,\grad u\rangle = \langle D^2 u\grad u,\grad u\rangle = \Delta_\infty u = 0.
\end{align*}
Hence, $\grad u$ is orthogonal to $\grad\frac12\abs{\grad u}^2$ and therefore tangential to the level set $\Omegamax(u)$, which implies $\grad_\tau u = \grad u$.

This computation requires second derivatives of $u$; 
however, infinity harmonic functions are typically not sufficiently smooth, see, e.g., \cite{juutinen1999infinity}. Relating $\grad_\tau u$ of a general infinity harmonic function $u$ to its gradient $\grad u$ without using second derivatives is a challenging topic for future work.}
\end{example}

\begin{remark}[Relation to previous results]
Similar results to \cref{thm:main} can be found in the paper \cite{champion2008infty} and the recent article \cite{katzourakis2021generalised} which appeared during completion of the present work.
In \cite{champion2008infty} the authors investigate infinity ground states \labelcref{eq:infty-groundstate} whereas in \cite{katzourakis2021generalised} the more general problem $\min_{u\in\lip_0(\Omega,\R^N)}{\norm{f(D u)}_{\L^\infty}}/{\norm{g(u)}_{\infty}}$ is studied, which optimizes over vector-valued Lipschitz functions and contains the minimization of \labelcref{eq:rayleigh_inf} as a special case.
Both papers derive similar characterizations of the optimality conditions, relying on the standard and somewhat technical approach of finite $p$ approximation. In contrast, our approach utilizes duality together with simple and elegant arguments from convex analysis. 
Just as we do \cite{champion2008infty} utilizes tangential gradients whereas \cite{katzourakis2021generalised} uses smooth approximation to characterize the gradient on the singular support of the measure $\sigma$.
It is an open question whether this notion of gradient coincides with the tangential gradient from our theory.
\end{remark}

\subsection{Characterization of Subdifferentials}
\label{sec:subdiffs}

For proving \cref{thm:main} we have to characterize the duality map and the subdifferential operator occurring in the eigenvalue problem \labelcref{eq:eigenvalue_prob_inf}. 
To this end, we introduce the set where $u$ attains its maximal modulus:
\begin{definition}\label{def:omegamax}
For $u\in\C_0(\Omega)$ we define
\begin{equation}
    \omegamax(u) =\{x\in\Omega\st |u(x)|=\norm{u}_\infty\}.
\end{equation}
\end{definition}
Because $u$ is continuous, the set $\omegamax(u)$ is closed.
We start with a characterization of the duality map of $\C_0(\Omega)$.

\begin{proposition}[Duality map]\label{prop:duality_map}
Let $\C_0(\Omega)$ be equipped with the norm $\norm{\cdot}_\infty$.
The duality map $\Phi_{\C_0(\Omega)}(u)$ for $u\in\C_0(\Omega)\setminus\{0\}$ consists of all measures $\mu\in\calM(\Omega)$ with $\abs{\mu}(\Omega)=1$ and
\begin{align}
    \label{eq:duality_map_cond_a}
    u\frac{\d\mu}{\d\abs{\mu}} &= \norm{u}_\infty, \quad\abs{\mu}-\text{a.e.},
\end{align}
where $\frac{\d\mu}{\d\abs{\mu}} \in \L^1_{\abs{\mu}}(\Omega)$ is the Radon--Nikod\'ym derivative of $\mu$ w.r.t. its total variation. 
Moreover, any such $\mu$ satisfies
\begin{align}
    \label{eq:duality_map_cond_b}
    \abs{\mu}(\Omega\setminus\omegamax(u))&=0
\end{align}
{and has the polar decomposition
\begin{align}\label{eq:polar_duality_map}
    \mu = \frac{u}{\norm{u}_\infty}\abs{\mu}.
\end{align}
}
\end{proposition}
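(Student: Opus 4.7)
The plan is to unpack the definition of the duality map and use the polar decomposition $\mu = h \abs{\mu}$ where $h \defeq \frac{\d\mu}{\d\abs{\mu}}$ satisfies $\abs{h} = 1$ $\abs{\mu}$-a.e. Under the Riesz representation identification of $\C_0(\Omega)^*$ with $\calM(\Omega)$ equipped with the total variation norm, membership of $\mu$ in $\Phi_{\C_0(\Omega)}(u)$ is equivalent to requiring $\abs{\mu}(\Omega) \leq 1$ together with $\int_\Omega u\,\d\mu = \norm{u}_\infty$, and the duality pairing rewrites as $\int_\Omega u h\,\d\abs{\mu}$.

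First I would handle the ``only if'' direction via the saturated chain
\begin{align*}
\norm{u}_\infty = \int_\Omega u h\,\d\abs{\mu} \leq \int_\Omega \abs{u}\,\d\abs{\mu} \leq \norm{u}_\infty \abs{\mu}(\Omega) \leq \norm{u}_\infty,
\end{align*}
which forces equality at every step. The rightmost equality gives $\abs{\mu}(\Omega) = 1$, the middle one gives $\abs{u} = \norm{u}_\infty$ $\abs{\mu}$-a.e. (which is the concentration property \labelcref{eq:duality_map_cond_b}), and the leftmost gives $u h = \abs{u}$ $\abs{\mu}$-a.e.; combining the latter two yields \labelcref{eq:duality_map_cond_a}. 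The ``if'' direction is a direct computation: if $\abs{\mu}(\Omega)=1$ and $u h = \norm{u}_\infty$ $\abs{\mu}$-a.e., then $\int_\Omega u\,\d\mu = \int_\Omega u h\,\d\abs{\mu} = \norm{u}_\infty$, so $\mu \in \Phi_{\C_0(\Omega)}(u)$.

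For the polar decomposition \labelcref{eq:polar_duality_map} I would argue $\abs{\mu}$-a.e.\ on $\omegamax(u)$, where $\abs{u} = \norm{u}_\infty > 0$: the identity $u h = \norm{u}_\infty$ forces $h = \norm{u}_\infty / u$, which coincides with $u / \norm{u}_\infty$ since $u \in \{\pm \norm{u}_\infty\}$ there. Hence $\mu = h \abs{\mu} = (u/\norm{u}_\infty) \abs{\mu}$ on $\omegamax(u)$, and this identity extends to all of $\Omega$ because the complement is $\abs{\mu}$-null by \labelcref{eq:duality_map_cond_b}.

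The argument is essentially a careful bookkeeping of equality cases in an $\L^1$--$\L^\infty$ H\"older-type estimate, and I do not foresee a significant obstacle; the only care required is to simultaneously read off the three independent consequences (total mass, concentration, pointwise identity for $h$) from the single saturated chain of inequalities, which is why I process them in the stated order.
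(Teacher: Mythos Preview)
Your proof is correct and follows essentially the same approach as the paper: both arguments write the duality pairing via the polar decomposition $\mu = h\,\abs{\mu}$, use the saturated chain of inequalities $\norm{u}_\infty = \int u h\,\d\abs{\mu} \leq \int \abs{u}\,\d\abs{\mu} \leq \norm{u}_\infty \abs{\mu}(\Omega) \leq \norm{u}_\infty$ for the ``only if'' direction, and then read off the total mass, concentration, and pointwise identity from the equality cases. The derivation of the polar decomposition \labelcref{eq:polar_duality_map} is likewise the same in substance (the paper phrases it as ``$u$ and $\frac{\d\mu}{\d\abs{\mu}}$ have the same sign'', which is equivalent to your $h = u/\norm{u}_\infty$ on $\omegamax(u)$).
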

\begin{proof}
If $\mu\in\calM(\Omega)$ admits $\abs{\mu}(\Omega)= 1$ and \labelcref{eq:duality_map_cond_a} it holds
\begin{align*}
    \langle\mu,u\rangle = \int_\Omega u \d\mu = \int_{\Omega}u\frac{\d\mu}{\d\abs{\mu}}\d\abs{\mu} = \norm{u}_\infty\abs{\mu}(\Omega)=\norm{u}_\infty,
\end{align*}
which implies $\mu\in\Phi_\X(u)$.

Conversely, let us assume that $\mu\in\Phi_\X(u)$.
Then it holds
\begin{align*}
    \norm{u}_\infty = \int_\Omega u \d\mu = \int_\Omega u \frac{\d\mu}{\d\abs{\mu}}\d\abs{\mu} \leq \int_\Omega \abs{u}\d\abs{\mu} \leq \norm{u}_\infty \int_\Omega \d\abs{\mu} \leq \norm{u}_\infty
\end{align*}
and, in particular, all inequalities in this estimate are equalities.
{Since $u\neq 0$}, this implies that the identity \labelcref{eq:duality_map_cond_a} holds true {and that $\abs{\mu}(\Omega)=1$}.
Second, it shows that $\abs{\mu}$-\text{a.e.} it holds $\abs{u}=\norm{u}_\infty$ which is equivalent to~\labelcref{eq:duality_map_cond_b}.
{%
We finish the proof by computing the polar decomposition \labelcref{eq:polar_duality_map}.
For this we compute
\begin{align*}
    \norm{u}_\infty = u\frac{\d\mu}{\d\abs{\mu}}\leq\abs{u}\leq\norm{u}_\infty,\quad \abs{\mu}-a.e.
\end{align*}
Hence, equality holds true and $u$ and $\frac{\d\mu}{\d\abs{\mu}}$ have the same sign $\abs{\mu}$-a.e. This implies
\begin{align*}
    \mu=\frac{\d\mu}{\d\abs{\mu}}\abs{\mu}=\frac{u}{\norm{u}_\infty}\abs{\mu}.
\end{align*}
}
\end{proof}
Now we characterize the subdifferential of $J^\infty(u)$, which is significantly more involved. 
We first prove an integral characterization similar to \cite{bredies2016pointwise}---which deals with the subdifferential of the total variation functional---and then prove a pointwise one.
The main insight from the following integral characterization is that the space of divergence-measure fields, defined in \cref{def:divergence-measure}, is strongly connected to the subdifferential of $J^\infty$.

\begin{proposition}[Integral characterization of $\partial J^\infty$]\label{prop:subdifferential}
Let $u\in \lip_0(\Omega)$. Then it holds
\begin{align}\label{eq:subdifferential_J_inf}
    \partial J^\infty(u)=\left\lbrace-\div\sigma\st\sigma\in\calD\calM(\Omega,\R^n),\;\langle-\div\sigma,u\rangle=J^\infty(u),\;|\sigma|(\Omega)\leq 1\right\rbrace.
\end{align}
A measure $\sigma\in\calD\calM(\Omega,\R^n)$ such that $-\div\sigma\in\partial J^\infty(u)$ is called \emph{calibration} of $u$.
\end{proposition}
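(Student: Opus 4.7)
Since $J^\infty$ is absolutely one-homogeneous and lower semicontinuous on $\C_0(\Omega)$, the identity \labelcref{eq:subdiff_1-hom} says that $\mu\in\mathcal{M}(\Omega)=\C_0(\Omega)^*$ belongs to $\partial J^\infty(u)$ if and only if (a) $\langle\mu,v\rangle\le J^\infty(v)$ for every $v\in\C_0(\Omega)$, and (b) $\langle\mu,u\rangle=J^\infty(u)$. Condition (b) appears verbatim in the right-hand side of \labelcref{eq:subdifferential_J_inf}. Hence, writing $C:=\{\mu\in\mathcal{M}(\Omega)\st(a)\text{ holds}\}$, it suffices to prove
\[
C = \left\{-\div\sigma\st \sigma\in\calD\calM(\Omega,\R^n),\;|\sigma|(\Omega)\le 1\right\}.
\]

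\textbf{Inclusion ``$\supseteq$''.} Take $\sigma\in\calD\calM(\Omega,\R^n)$ with $|\sigma|(\Omega)\le 1$ and put $\mu:=-\div\sigma$. For $\varphi\in\C^\infty_c(\Omega)$, \cref{def:divergence-measure} gives $\langle\mu,\varphi\rangle = -\int\nabla\varphi\cdot\d\sigma \le \|\nabla\varphi\|_\infty\,|\sigma|(\Omega) \le \|\nabla\varphi\|_{\L^\infty} = J^\infty(\varphi)$. For a general $v\in\lip_0(\Omega)$ I would approximate it by $v_n\in\C^\infty_c(\Omega)$ with $v_n\to v$ uniformly and $\|\nabla v_n\|_{\L^\infty}\to\|\nabla v\|_{\L^\infty}$ via standard mollification combined with a boundary cut-off; uniform convergence on a finite measure gives $\langle\mu,v_n\rangle\to\langle\mu,v\rangle$ and the bound passes to the limit. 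For $v\in\C_0(\Omega)\setminus\lip_0(\Omega)$ we have $J^\infty(v)=+\infty$ so (a) is vacuous, and thus $\mu\in C$.

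\textbf{Inclusion ``$\subseteq$''.} Take $\mu\in C$. I would build $\sigma$ by Hahn--Banach. Consider the linear space $E:=\{\nabla v\st v\in\C^\infty_c(\Omega)\}\subset\C_0(\Omega,\R^n)$ equipped with the sup-norm, and define $\ell\colon E\to\R$ by $\ell(\nabla v):=-\langle\mu,v\rangle$. This is well-defined: if $\nabla v_1=\nabla v_2$ with $v_i\in\C^\infty_c(\Omega)$, then $v_1-v_2$ is locally constant and compactly supported in $\Omega$, hence zero. Applying (a) to $\pm v$ yields $|\ell(\nabla v)|\le\|\nabla v\|_\infty$, so $\ell$ is bounded by $1$. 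Extend $\ell$ to a functional $\tilde\ell$ on $\C_0(\Omega,\R^n)$ of norm $\le 1$ via Hahn--Banach, and represent it by Riesz--Markov--Kakutani as $\tilde\ell(\varphi)=\int\varphi\cdot\d\sigma$ for a vector measure $\sigma\in\mathcal{M}(\Omega,\R^n)$ with $|\sigma|(\Omega)=\|\tilde\ell\|\le 1$. By construction, $-\langle\mu,v\rangle=\int\nabla v\cdot\d\sigma$ for all $v\in\C^\infty_c(\Omega)$, which is exactly the defining identity of \cref{def:divergence-measure} giving $\sigma\in\calD\calM(\Omega,\R^n)$ and $\div\sigma=-\mu$.

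\textbf{Combining.} Condition (b) in the subdifferential characterization becomes $\langle-\div\sigma,u\rangle=J^\infty(u)$, giving the stated equality. The main obstacle I expect is the approximation step in the ``$\supseteq$'' direction: constructing $v_n\in\C^\infty_c(\Omega)$ with $v_n\to v$ uniformly and $\|\nabla v_n\|_{\L^\infty}\to\|\nabla v\|_{\L^\infty}$ for every $v\in\lip_0(\Omega)$ on a general bounded domain. Mollifying a zero extension of $v$ and multiplying by a cut-off with gradient supported near $\partial\Omega$ is the canonical recipe; the minor technicalities come from ensuring the cut-off does not blow up the Lipschitz norm, which is handled by the fact that $v$ already vanishes on $\partial\Omega$. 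Once this approximation is in place, both directions of the argument are clean consequences of the Hahn--Banach and Riesz representation theorems.
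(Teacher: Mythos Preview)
Your proof is correct and takes a genuinely different route from the paper's. Both arguments share the ``$\supseteq$'' direction: the paper cites \cite{bungert2020structural} for precisely the approximation $v_n\in\C_c^\infty(\Omega)$, $v_n\to v$ uniformly with $J^\infty(v_n)\le J^\infty(v)$, that you sketch. The difference lies in ``$\subseteq$''. The paper follows \cite{bredies2016pointwise} and argues via closures: it shows that the set $K$ on the right-hand side of \labelcref{eq:subdifferential_J_inf} is closed and contains the smooth divergences $C=\{-\div\sigma:\sigma\in\C^\infty(\closure\Omega),\,\norm{\sigma}_{\L^1}\le 1\}$; since $J^\infty$ is the support function of $C$, its subdifferential at zero is the weak-* closed convex hull of $C$, which is then sandwiched between $C$ and $K$. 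Your Hahn--Banach/Riesz construction is more direct: it produces the calibrating field $\sigma$ explicitly from $\mu$, bypassing any closure or bipolar reasoning. This is arguably more elementary and avoids having to check that $K$ is weak-* closed.

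One minor sign slip: with $\ell(\nabla v):=-\langle\mu,v\rangle$ and $\tilde\ell(\varphi)=\int\varphi\cdot\d\sigma$, you get $-\int\nabla v\cdot\d\sigma=\langle\mu,v\rangle$, which by \cref{def:divergence-measure} says $\div\sigma=\mu$, not $\div\sigma=-\mu$. The fix is trivial---either drop the minus sign in the definition of $\ell$, or replace $\sigma$ by $-\sigma$ at the end.
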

\begin{proof}
Following \cite{bredies2016pointwise} we have to show that the closure of the set
\begin{align}
    C\defeq \left\lbrace -\div\sigma\st\sigma\in \C^\infty(\closure{\Omega}),\;\norm{\sigma}_{\L^1}\leq 1 \right\rbrace
\end{align}
with respect to the total variation norm on $\calM(\Omega)$ is given by
\begin{align}
    \closure{C}=\{-\div \sigma\st \sigma\in \calD\calM(\Omega,\R^n),\;\abs{\sigma}(\Omega)\leq 1\}=:K.
\end{align}
Since $C\subset K$ is obviously true, we first show that $K$ is closed which implies $\closure{C}\subset K$.
To this end, let us take a sequence $\{\sigma_n\}_{n\in\N}\subset \calM(\Omega,\R^n)$ such that $\abs{\sigma_n}(\Omega)\leq 1$ and $-\div\sigma_n\to\mu\in\calM(\Omega)$. 
Then there is a vector-valued Radon measure $\sigma\in\calM(\Omega,\R^n)$ such that (up to a subsequence that we do not relabel) $\sigma_n\wsto\sigma$ and hence $|\sigma|(\Omega)\leq 1$, by lower semicontinuity of the total variation. 
Moreover, we obtain
\begin{align*}
    \langle \sigma,\nabla\varphi\rangle=\lim_{n\to\infty}\langle \sigma_n,\nabla\varphi\rangle=\lim_{n\to\infty}\langle-\div \sigma_n,\varphi\rangle=\langle\mu,\varphi\rangle,\quad\forall\varphi\in \C^\infty_c(\Omega),
\end{align*}
which means that $\mu=-\div\sigma$. 
To show that $K\subset\closure{C}$ it suffices to prove that 
\begin{align*}
    \langle-\div\sigma, u\rangle\leq J^\infty(u),\quad\forall u\in\dom(J^\infty).
\end{align*}
By~\cite{bungert2020structural} we can find a sequence $\{u_n\}_{n\in\N}\subset \C^\infty_c(\Omega)$ which satisfies $\norm{u_n-u}_\infty\to 0$ as $n\to\infty$ and $J^\infty(u_n)\leq J^\infty(u)$. Then it holds
\begin{align*}
    \langle-\div\sigma,u\rangle=\lim_{n\to\infty}\langle-\div\sigma,u_n\rangle=\lim_{n\to\infty}\langle\sigma,\nabla u_n\rangle\leq \lim_{n\to\infty} J^\infty(u_n)\leq J^\infty(u),
\end{align*}
which lets us conclude.
\end{proof}

\sloppy 
Before we proceed with a pointwise characterization of calibrations $\sigma$ which satisfy $-\div\sigma\in\partial J^\infty(u)$, we need to understand how the integration-by-parts formula
\begin{align*}
    \int_\Omega \nabla u \cdot\d\sigma = -\int_\Omega u\d\div\sigma = J^\infty(u)
\end{align*}
can be made rigorous.
Assuming for a moment that the formula is valid one can show analogously to \cref{prop:duality_map} that $\sigma$ is parallel to $\nabla u$ and that $\sigma$ is supported where $\abs{\nabla u}$ is maximal.

The problem with this formula is that integral of the $\L^\infty$-function $\nabla u$ with respect to the (non absolutely continuous) measure $\sigma$ is not well-defined.
This can be fixed by replacing $\nabla u$ with the \emph{tangential gradient} with respect to $\abs{\sigma}$, {a concept that goes back to~\cite{bouchitte1997energies}}.
We will use the following definition, which is a slight modification of~\cite[Def. 4.6]{jimenez:2008}. For details, we refer the reader to~\cite{jimenez:2008, champion2008infty}. 
{We would also like to point to \cite{luvcic2021characterisation} for a novel characterization of the tangential gradient as minimal norm element of a set-valued gradient operator.}
\begin{definition}[Tangential gradient]\label{def:tangetial-grad}
Let $u \in \lip_0(\Omega)$ and $\mu \in \calM_+(\Omega)$ be a non-negative measure. 
Let  $\{u_n\}_{n\in\N} \subset \C_c^\infty(\Omega)$ be any sequence such that
\begin{subequations}\label{eq:conv-def-tangential-grad}
\begin{align}
    &\sup_{n \in \N} \max_{x \in \Omega}\abs{\grad u_n(x)} \leq J^\infty(u), \label{eq:conv-def-tangential-grad-a} \\
    & \max_{x \in \Omega} \abs{u_n(x) - u(x)} \to 0.
\end{align}
\end{subequations}
Denote by $P_\mu(x, \nabla u_n(x))$ the projection of the gradient $\nabla u_n(x)$ onto the tangent space of $\mu$ at $x \in \Omega$ (see~\cite[Def. 4.3]{jimenez:2008} {and \cite{fragala1999some} for relations to other notions of tangent spaces to a measure}). The \emph{tangential gradient} of $u$ with respect to $\mu$ is defined as the following limit with respect to the weak$^*$ convergence in $\L^\infty_\mu(\Omega,\R^n)$
\begin{equation*}
    \nabla_\mu u \defeq  \text{weak}^*\text{-}\lim_{n \to \infty} P_\mu(\cdot, \nabla u_n(\cdot)).
\end{equation*}
By~\cite[Prop. 4.5]{jimenez:2008}, this limit exists and does not depend on the choice of the approximating sequence $\{u_n\}_{n\in\N}$. We note that the operation $\phi(\cdot) \mapsto P_\mu(\cdot, \phi(\cdot))$ is nonlinear in $\phi$.
\end{definition}
\begin{lemma}\label{lem:norm-tangential-grad}
Let $u \in \lip_0(\Omega)$ and $\mu \in \calM_+(\Omega)$. Then
\begin{equation*}
    \abs{\nabla_\mu u} \leq J^\infty(u) \quad \text{$\mu$-\text{a.e.}}
\end{equation*}
\end{lemma}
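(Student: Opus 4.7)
The plan is to invoke the defining construction of $\grad_\mu u$ in \cref{def:tangetial-grad} and then use the fact that a pointwise a.e. bound on a bounded sequence in $L^\infty_\mu$ is preserved under weak$^*$ limits. First I would fix an approximating sequence $\{u_n\}_{n\in\N}\subset \C^\infty_c(\Omega)$ as in~\labelcref{eq:conv-def-tangential-grad}; the existence of such a sequence is guaranteed by the structural approximation result of~\cite{bungert2020structural} invoked already in the proof of \cref{prop:subdifferential}.

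Next, since $P_\mu(x,\cdot)$ is an orthogonal projection onto the (linear) $\mu$-tangent space at $x$, it is a contraction and satisfies
\begin{equation*}
    \abs{P_\mu(x,\grad u_n(x))}\leq \abs{\grad u_n(x)}\leq J^\infty(u),\qquad\forall x\in\Omega,\ \forall n\in\N,
\end{equation*}
where the second inequality is precisely~\labelcref{eq:conv-def-tangential-grad-a}. Hence the sequence $P_\mu(\cdot,\grad u_n(\cdot))$ lies in the closed ball of radius $J^\infty(u)$ in $\L^\infty_\mu(\Omega,\R^n)$.

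The final step is to pass this bound through the weak$^*$ limit that defines $\grad_\mu u$. To this end I would argue that the set
\begin{equation*}
    B\defeq\left\{\phi\in\L^\infty_\mu(\Omega,\R^n)\st\abs{\phi(x)}\leq J^\infty(u)\ \text{for $\mu$-\text{a.e.}}\ x\right\}
\end{equation*}
is weak$^*$ closed: it is convex, and it is norm-closed because it can be written as $B=\bigcap_{v\in\R^n,\,\abs{v}\leq 1}\{\phi\st \langle\phi(\cdot),v\rangle\leq J^\infty(u)\ \mu\text{-a.e.}\}$, and each intersecting set is of the form $\{\phi\st\int\langle\phi,v\rangle\psi\d\mu\leq J^\infty(u)\int\psi\d\mu\text{ for all }\psi\in\L^1_\mu,\ \psi\geq 0\}$, hence is intersection of half-spaces defined by continuous linear functionals on $(\L^\infty_\mu)^*$ (restricted to $\L^1_\mu$-duality), so it is weak$^*$ closed. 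Since $\grad_\mu u$ is the weak$^*$ limit of elements of $B$, it also lies in $B$, which is the claim.

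The one point requiring some care is verifying that the closed ball is weak$^*$ closed despite $\L^\infty_\mu$ being the dual of $\L^1_\mu$; this is the only technical step, but it is a standard fact and follows from testing against non-negative $\psi\in\L^1_\mu$ supported on sets of finite measure and then taking $v$ in a countable dense subset of the unit ball of $\R^n$. The argument is essentially routine once the right reformulation is in place, and no deep tools beyond the definition of weak$^*$ convergence are needed.
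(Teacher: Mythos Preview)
Your proposal is correct and follows essentially the same route as the paper: both show that the projected gradients $P_\mu(\cdot,\nabla u_n(\cdot))$ lie in the closed $\L^\infty_\mu$-ball of radius $J^\infty(u)$ and that this property survives the weak$^*$ limit defining $\nabla_\mu u$. The paper's version is shorter, invoking the standard fact that the norm on a dual space is weak$^*$ lower semicontinuous (equivalently, the closed ball is weak$^*$ compact by Banach--Alaoglu, hence weak$^*$ closed), whereas you re-derive this closedness by hand; your half-space argument is fine, though the phrase ``it is convex, and it is norm-closed because\ldots'' is misleading since convex plus norm-closed alone would not give weak$^*$ closedness---it is the explicit representation as an intersection of weak$^*$-closed half-spaces that does the work.
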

\begin{proof}
Since $P_\mu(\cdot, \nabla u_n(\cdot)) \wsto \nabla_\mu u$ and norms are weakly* lower-semicontinuous, we get
\begin{equation*}
    \norm{\nabla_\mu u}_{\L^\infty_\mu} \leq \liminf_{n \to \infty} \norm{P_\mu(\cdot, \nabla u_n(\cdot))}_{\L^\infty_\mu} \leq \liminf_{n \to \infty} \norm{{\nabla u_n}}_{\L^\infty_\mu} \leq J^\infty(u),
\end{equation*}
which implies the claim. Here we used the fact that $\abs{P_\mu(x, \nabla u_n(x))} \leq \abs{\nabla u_n(x)}\leq J^\infty(u)$ for all $x\in\Omega$, using condition~\labelcref{eq:conv-def-tangential-grad-a}.
\end{proof}

\begin{proposition}[Integration by parts]\label{prop:int_by_parts}
Let $u \in \lip_0(\Omega)$ and $\sigma \in \calD\calM(\Omega,\R^n)$. Then
\begin{align}
    -\int_\Omega u\d\div\sigma = \int_\Omega \nabla_{\abs{\sigma}} u \cdot\d{\sigma}.
\end{align}
\end{proposition}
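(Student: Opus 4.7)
The plan is to reduce to the classical integration by parts for smooth functions and then pass to the limit using the defining properties of the tangential gradient. Let $\{u_n\}_{n\in\N}\subset\C^\infty_c(\Omega)$ be an approximating sequence as in \cref{def:tangetial-grad} with $\mu = \abs{\sigma}$, so that $u_n\to u$ uniformly and $P_{\abs{\sigma}}(\cdot,\nabla u_n(\cdot))\wsto \nabla_{\abs{\sigma}}u$ in $\L^\infty_{\abs{\sigma}}(\Omega,\R^n)$. The existence of such a sequence, satisfying also the uniform gradient bound \labelcref{eq:conv-def-tangential-grad-a}, follows as in the proof of \cref{prop:subdifferential}.

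For each smooth $u_n$, the classical integration by parts against the divergence-measure field $\sigma$ reads
\begin{equation*}
-\int_\Omega u_n\,\mathrm{d}\div\sigma = \int_\Omega \nabla u_n\cdot\mathrm{d}\sigma = \int_\Omega \nabla u_n(x)\cdot\theta(x)\,\mathrm{d}\abs{\sigma}(x),
\end{equation*}
where $\theta := \tfrac{\mathrm{d}\sigma}{\mathrm{d}\abs{\sigma}}\in\L^\infty_{\abs{\sigma}}(\Omega,\R^n)$ is the polar vector with $\abs{\theta}=1$ $\abs{\sigma}$-a.e. Since $\div\sigma\in\calM(\Omega)$ is finite and $u_n\to u$ uniformly, the left-hand side converges to $-\int_\Omega u\,\mathrm{d}\div\sigma$.

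For the right-hand side, the key fact is that $\theta(x)$ belongs to the tangent space of $\abs{\sigma}$ at $\abs{\sigma}$-a.e.\ $x$; this is precisely the characterization of divergence-measure fields given in~\cite{bouchitte1997energies} (see also~\cite{jimenez:2008}), and is the reason why the tangential gradient is the right object here. Once this is granted, the orthogonal decomposition $\nabla u_n(x) = P_{\abs{\sigma}}(x,\nabla u_n(x)) + (\id - P_{\abs{\sigma}}(x,\cdot))\nabla u_n(x)$ yields $\nabla u_n(x)\cdot\theta(x) = P_{\abs{\sigma}}(x,\nabla u_n(x))\cdot\theta(x)$ for $\abs{\sigma}$-a.e.\ $x$. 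Combining the weak$^*$ convergence $P_{\abs{\sigma}}(\cdot,\nabla u_n(\cdot))\wsto\nabla_{\abs{\sigma}}u$ in $\L^\infty_{\abs{\sigma}}(\Omega,\R^n)$ with $\theta\in\L^1_{\abs{\sigma}}(\Omega,\R^n)$ (as $\abs{\sigma}$ is finite and $\abs{\theta}=1$), we conclude
\begin{equation*}
\int_\Omega \nabla u_n\cdot\theta\,\mathrm{d}\abs{\sigma} \;\longrightarrow\; \int_\Omega \nabla_{\abs{\sigma}}u\cdot\theta\,\mathrm{d}\abs{\sigma} \;=\; \int_\Omega \nabla_{\abs{\sigma}}u\cdot\mathrm{d}\sigma,
\end{equation*}
which combined with the left-hand side limit gives the claim.

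The main obstacle is the tangency property $\theta(x)\in T_{\abs{\sigma}}(x)$ for $\abs{\sigma}$-a.e.\ $x$, which is the geometric content of being a divergence-measure field and cannot be derived by elementary manipulation: intuitively, any normal component of $\sigma$ would contribute a singular part to $\div\sigma$ that is not a measure. Everything else is a routine passage to the limit in a bilinear pairing.
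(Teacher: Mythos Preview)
Your proof is correct and follows essentially the same approach as the paper's: approximate $u$ by smooth $u_n$, integrate by parts classically, and pass to the limit using the tangency of $\theta=\frac{\d\sigma}{\d\abs{\sigma}}$ (which the paper cites as \cite[Lem.~4.9]{jimenez:2008}) together with the weak$^*$ convergence defining $\nabla_{\abs{\sigma}}u$. The paper itself notes that this is a straightforward adaptation of \cite[Prop.~4.10]{jimenez:2008}.
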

\begin{proof}
The proof is a straightforward adaption of \cite[Prop. 4.10]{jimenez:2008}.
Let $\{u_n\}_{n\in\N}\subset\C_c^\infty(\Omega)$ be a sequence satisfying \labelcref{eq:conv-def-tangential-grad}.
Then we can compute
\begin{align*}
    -\int_\Omega u\d\div\sigma 
    &= -\lim_{n\to\infty}\int_\Omega u_n\d\div\sigma = \lim_{n\to\infty}\int_\Omega \nabla u_n \cdot\d\sigma\\
    &= \lim_{n\to\infty}\int_\Omega \nabla u_n \cdot\frac{\d\sigma}{\d\abs{\sigma}}\d\abs{\sigma}.
\end{align*}
Since $\abs{\sigma}$-a.e. the function $\frac{\d\sigma}{\d\abs{\sigma}}$ lies in the tangent space of $\abs{sigma}$ (see \cite[Lem.~4.9]{jimenez:2008}), we get 
\begin{align*}
    \nabla u_n \cdot \frac{\d\sigma}{\d\abs{\sigma}} = P_{\abs{\sigma}}(\cdot,\nabla u_n(\cdot))\cdot \frac{\d\sigma}{\d\abs{\sigma}},\quad\abs{\sigma}-\text{a.e.}
\end{align*}
and hence by the definition of the tangential gradient:
\begin{align*}
   -\int_\Omega u\d\div\sigma 
   &=  \lim_{n\to\infty}\int_\Omega P_{\abs{\sigma}}(\cdot,\nabla u_n(\cdot)) \cdot\frac{\d\sigma}{\d\abs{\sigma}}\d\abs{\sigma} \\
   &= \int_\Omega \nabla_{\abs{\sigma}} u\cdot\frac{\d\sigma}{\d\abs{\sigma}}\d\abs{\sigma} = \int_\Omega \nabla_{\abs{\sigma}} u\cdot\d{\sigma}.
\end{align*}
\end{proof}
With the same approximation trick, we can define the set where a function $u \in \lip_0(\Omega)$ attains the maximal value of its gradient in a sense that will become clear in \cref{prop:subdiff-pointwise}. 

\begin{definition}\label{def:Omegamax}
Let $u \in \lip_0({\Omega})$ and consider any sequence $\{u_n\}_{n\in\N} \subset \C_c^\infty(\Omega)$ satisfying~\labelcref{eq:conv-def-tangential-grad}. 
We define
\begin{equation*}
    \Omegamax(u) \defeq   \{x \in \Omega \colon \limsup_{n \to \infty} \abs{\grad u_n(x)} = \norm{\nabla u}_{\L^\infty}\}.
\end{equation*}
Similarly to \cref{def:tangetial-grad}, this definition does not depend on the choice of the approximating sequence (cf.~\cite[Prop. 4.5]{jimenez:2008}).
\end{definition}
\begin{remark}
This definition bears similarities with the attainment set defined in \cite{brizzi2021property} as
\begin{align*}
    \mathcal{A}(u) \defeq \{x\in\Omega \st \abs{\nabla u}(x)=\norm{\nabla u}_{\L^\infty}\}.
\end{align*}
Here $x\mapsto\abs{\nabla u}(x)$ denotes an \emph{everywhere-defined} version of the $\L^\infty$-function $x\mapsto\abs{\nabla u(x)}$, defined as
\begin{align*}
    \abs{\nabla u}(x)\defeq\lim_{r\downarrow 0}\inf\left\lbrace\lambda>0\st u(y)-u(x)\leq \lambda\abs{y-x},\;\forall y\in B_r(x)\right\rbrace,\quad x\in\Omega.
\end{align*}
It is not unlikely that under suitable regularity conditions the sets $\Omegamax(u)$ and $\mathcal{A}(u)$ coincide, however, for dealing with tangential gradients our definition is more useful.
\end{remark}

We continue with a few examples that illustrate the definition of $\Omegamax(u)$.

\begin{figure*}[t!]%
\centering
\begin{subfigure}[t]{0.23\textwidth}
\centering
    \begin{tikzpicture}%
    \begin{axis}[scale=.5, nice axis, ticks=none, ymin=-.1, ymax=1.1, xmin=-1.2, xmax=1.2]%
            \addplot[domain=-1:1, line] {1-abs(x)};%
    \end{axis}%
    \end{tikzpicture}
    \caption{$\Omegamax = \Omega \setminus \{0\}$}
    \label{fig:dist-fun-1d}
\end{subfigure}
\begin{subfigure}[t]{0.23\textwidth}
\centering
    \begin{tikzpicture}%
    \begin{axis}[scale=.5, nice axis, ticks=none, ymin=-.1, ymax=1.1, xmin=-1.2, xmax=1.2]%
            \addplot[domain=-1:1, line] {1-2*abs(x)+x^2};%
    \end{axis}%
    \end{tikzpicture}
    \caption{$\Omegamax = \emptyset$}
    \label{fig:peak-1d}
\end{subfigure}
\begin{subfigure}[t]{0.25\textwidth}
\centering
    \begin{tikzpicture}
    \begin{axis}[scale=.7, anchor=origin, zmin=-.1, zmax=1.1, xmin=-1.2, xmax=1.2, ymin=-1.2, ymax=1.2, axis lines = none, clip=false]
          \begin{scope}[canvas is xy plane at z=0,>=stealth]
         \draw (-1,-1) -- (1,-1) -- (1,1) -- (-1,1) -- (-1,-1);
         \draw[dashed] (-1,-1) -- (1,1);
         \draw[dashed] (1,-1) -- (-1,1);
        \end{scope} 

    \draw[dashed] (0,0,0) -- (0,0,1);
    \addplot3[surf, colormap = {whiteblack}{color(0cm)  = (white);color(1cm) = (black)}, opacity=0.5, domain=-1:1, y domain=-1:1] ({x}, {y}, { 1 - max(abs(x),abs(y)) });
    \draw[thick] (-1,-1,0) -- (1,-1,0) -- (1,1,0);
    \draw[thick, dashed] (-1,-1,0) -- (-1,1,0) -- (1,1,0);
    \draw[thick] (-1,-1,0) -- (0,0,1) -- (1,1,0);
    \draw[thick] (1,-1,0) -- (0,0,1);
    \draw[thick,dashed] (-1,1,0) -- (0,0,1);
    \end{axis}   
    \end{tikzpicture}
    \caption{$\Omegamax = \Omega \setminus \{\text{diags}\}$}
    \label{fig:dist-fun-square}
\end{subfigure}
\begin{subfigure}[t]{0.24\textwidth}
\centering
    \begin{tikzpicture}
    \begin{axis}[scale=.7, anchor=origin, zmin=-.1, zmax=1.1, xmin=-1.2, xmax=1.2, ymin=-1.2, ymax=1.2, axis lines = none, clip=false]
          \begin{scope}[canvas is xy plane at z=0,>=stealth]
         \draw (-1,-1) -- (1,-1) -- (1,1) -- (-1,1) -- (-1,-1);
        %  \draw[dashed] (-1,-1) -- (1,1);
        %  \draw[dashed] (1,-1) -- (-1,1);
        \end{scope} 
    \addplot3[surf, colormap = {whiteblack}{color(0cm)  = (white);color(1cm) = (black)}, opacity=0.5, domain=-1:1, y domain=-1:1] ({x}, {y}, {(1-2*abs(x)+x^2)*min(2 - 2*abs(y),1) });
    
    \draw[thick] (-1,1,0) -- (-1,-1,0) -- (1,-1,0) -- (1,1,0);
    \draw[thick, dashed] (-1,1,0) -- (1,1,0);
    \draw[thick] (0,-1,0) -- (0,-.5,1) -- (0,.5,1);
    \draw[thick, dashed] (0,.5,1) -- (0,1,0);
    \addplot3[domain=-1:1, samples y=0] ({x},{-0.5},{1-2*abs(x)+x^2});
    \addplot3[domain=0:1, samples y=0] ({x},{0.5},{1-2*abs(x)+x^2});
    \addplot3[domain=-1:0, samples y=0, dashed] ({x},{0.5},{1-2*abs(x)+x^2});
    \addplot3[domain=0:1, samples y=0, thick] ({x-1},{x/2-1},{x^3});
    \addplot3[domain=0:1, samples y=0, thick] ({1-x},{x/2-1},{x^3});
    \addplot3[domain=0:1, samples y=0, dashed, thick] ({x-1},{1-x/2},{x^3});
    \addplot3[domain=0:1, samples y=0, thick] ({1-x},{1-x/2},{x^3});
    
    \node[anchor=north east,scale=0.8] at (0,-1,0) {$x$};
    \node[anchor=north west,scale=0.8] at (1,0,0) {$y$};
    
    \node[anchor=south west,scale=0.8] at (0,-1,0) {$A$};
    \node[anchor=south east,scale=0.8] at (0,-0.5,1) {$A^\prime$};
    \node[anchor=north,scale=0.8] at (0,1,0) {$B$};
    \node[anchor=south east,scale=0.8] at (0,0.5,1) {$B^\prime$};
    \end{axis}   
    \end{tikzpicture}
    \caption{$\Omegamax = P_{xy}(\overline{AA^\prime}) \cup  P_{xy}(\overline{BB^\prime})$. $P_{xy}$ is projection onto $xy$-plane.}
    \label{fig:combined-2d}
\end{subfigure}
\caption{Functions from \cref{ex:dist-fun-1d,ex:peak-1d,ex:dist-fun-square,ex:combined-2d}}
\end{figure*}

\begin{example}[Distance function of an interval]\label{ex:dist-fun-1d}
Let $\Omega \defeq (-1,1)$ and $u(x) \defeq \d_\Omega(x) = 1 - \abs{x}$ be the distance function (sketched in \cref{fig:dist-fun-1d}). Choosing $u_n$ to be a sequence of smooth symmetric approximations satisfying~\labelcref{eq:conv-def-tangential-grad}, we see that $\lim_{n \to \infty} \abs{u'_n(x)} = 1$ for $x \neq 0$ and $u'_n(0) = 0$ for all $n$. Therefore, $\Omegamax(u) = (-1,0) \cup (0,1)$.
\end{example}

\begin{example}[A function with empty $\Omegamax$]\label{ex:peak-1d}
Let $\Omega \defeq (-1,1)$ and $u(x)=1 - 2\abs{x}+ x^2$ (sketched in \cref{fig:peak-1d}). 
In this case $\abs{u'}$ increases towards the origin and the maximal value is attained at zero, where $u$ is not differentiable. Choosing again $u_n$ to be a sequence of smooth symmetric approximations satisfying~\labelcref{eq:conv-def-tangential-grad}, we see that 
\begin{equation*}
\lim_{n \to \infty} \abs{u'_n(x)} = 
\begin{cases}
2\abs{x - \sign(x)} < 2 = J^\infty(u), \quad & x \in (-1,0)\cup(0,1),\\
0, \quad & x = 0.
\end{cases}
\end{equation*}
Hence $\Omegamax(u)=\emptyset$.
\end{example}

\begin{example}[Distance function of a square]\label{ex:dist-fun-square}
Let $\Omega \defeq (-1,1)^2$ and $u(x) \defeq \d_\Omega(x) = 1 - \max(\abs{x},\abs{y})$. A sketch is shown in \cref{fig:dist-fun-square}. Choose again a (radially) symmetric smooth approximating sequence $u_n$. Since $u$ is differentiable everywhere except for the diagonals $\overline{(-1,-1)(1,1)}$ and $\overline{(-1,1)(1,-1)}$, we have $\lim_{n\to\infty} \grad u_n(x) = \grad u(x)$ everywhere except for the diagonals. A calculation shows that on the diagonals $\lim_{n\to\infty} \abs{\grad u_n(x)} = \frac{1}{\sqrt{2}}$, i.e.,
\begin{equation*}
 \lim_{n \to \infty} \abs{\grad u_n(x)} = 
\begin{cases}
1 = J^\infty(u), \quad & x \notin \text{diagonals},\\
\frac{1}{\sqrt{2}} < J^\infty(u), \quad & x \in \text{diagonals}.
\end{cases}
\end{equation*} 
Hence, $\Omegamax(u) = \Omega \setminus \{\text{diagonals}\}$.
\end{example}

\begin{example}(Mountain ridge)\label{ex:combined-2d}
Let $\Omega = (-1,1)^2$ and denote 
\begin{equation*}
    \varphi(x) \defeq 1 - 2\abs{x}+ x^2, \quad \psi(y) \defeq 
    \begin{cases}
    2(y+1), \quad & y \in (-1,-0.5),\\
    1, \quad & y \in (-0.5,0.5),\\
    2(1-y), \quad & y \in (0.5,1).
    \end{cases}
\end{equation*}
Let $u(x,y) \defeq \varphi(x) \psi(y)$ (sketched in \cref{fig:combined-2d}). From \cref{ex:peak-1d} we know that the partial derivative $\frac{\partial u}{\partial x}$ does not exist at $x=0$ and that for a symmetric approximating sequence $u_n$ we have
\begin{equation*}
    \lim_{n\to\infty} \left. \frac{\partial u_n}{\partial x} \right|_{x=0} = 0 < J^\infty(u) = 2.
\end{equation*}
Since $u(0,y)=1$ for $y \in \left(-\frac12,\frac12\right)$, we have $\left. \frac{\partial u}{\partial y}\right|_{x=0} = 0$ and $\lim_{n\to\infty} \left. \frac{\partial u_n}{\partial y} \right|_{x=0} = 0$ for $y \in \left(-\frac12,\frac12\right)$. Hence,
\begin{equation*}
    \lim_{n\to\infty} \abs{\grad u_n(x,y)} = 0 < J^\infty(u) = 2 \quad \text{for $x=0$, $y \in \left(-\frac12,\frac12\right)$.}
\end{equation*}
For $y \in \left(-1,-\frac12\right)\cup\left(\frac12,1\right)$, however, we have $\left| \frac{\partial u}{\partial y}\right|(0,y) = 2$ and it is easy to convince oneself that
\begin{equation*}
    \lim_{n\to\infty} \abs{\grad u_n(x,y)} = 2 = J^\infty(u) \quad \text{for $x=0$, $y \in \left(-1,-\frac12\right)\cup\left(\frac12,1\right)$.}
\end{equation*}
Therefore, we conclude that
\begin{equation*}
    \Omegamax(u) = \left \{(x,y) \colon x=0, \, y \in \left(-1,-\frac12\right)\cup\left(\frac12,1\right) \right\}.
\end{equation*}
In other words, $\Omegamax(u)$ consists of projections onto the $xy$-plane of the two open segments $\overline{AA^\prime}$ and $\overline{BB^\prime}$ shown in \cref{fig:combined-2d}.
We note that on the whole set $\Omegamax(u)$ the gradient $\nabla u$ \emph{does not exist}. 
In contrast, according to \cite{evans2005various} the gradients of an infinity harmonic potential exist on the so-called contact set $P_{xy}(\overline{AA'})\cup P_{xy}(\overline{BB'})$ which implies that the function $u$ which we constructed is no infinity harmonic potential.
\end{example}

\begin{example}[Distance function of fat Cantor set]
Let $F\subset[0,1]$ be a Cantor set \cite{smith1874integration} and let $u(x)=\dist(x,F)$ be the distance function to $F$.
In this case $\Omegamax(u)=[0,1]\setminus(F\cup D)$ where $D$ is a countable discrete set, corresponding to the midpoints and boundary points of the intervals which are removed from $[0,1]$ to construct the Cantor set. 
We note that $\Omegamax(u)$ is dense in $[0,1]$.
\end{example}

Now we are ready to give a pointwise characterisation of the subdifferential of $J^\infty$. 

\begin{proposition}[Pointwise characterization of calibrations, Part 1]\label{prop:subdiff-pointwise}
Let $u\in \lip_0(\Omega)\setminus\{0\}$ and $\sigma\in\calD\calM(\Omega,\R^n)$.
It holds $-\div\sigma \in \partial J^\infty(u)$ if and only if $\abs{\sigma}(\Omega)=1$ and
\begin{equation}\label{eq:pw_subdiff-0}
     \nabla_{\abs{\sigma}} u \cdot \frac{\d\sigma}{\d\abs{\sigma}} = J^\infty(u) \quad  \abs{\sigma}-\text{a.e.}, 
\end{equation}
where $\nabla_{\abs{\sigma}}$ denotes the tangential gradient w.r.t. $\abs{\sigma}$ and $\frac{\d\sigma}{\d\abs{\sigma}} \in \L^1_{\abs{\sigma}}(\Omega)$ is the Radon--Nikod\'ym derivative of $\sigma$ w.r.t. its total variation. 
Moreover, any such $\sigma$ satisfies
\begin{equation}\label{eq:sigma_support}
    \abs{\sigma}(\Omega \setminus \Omegamax(u)) = 0
\end{equation}
{and has the polar decomposition
\begin{align}\label{eq:polar_subdiff}
    \sigma = \frac{\grad_{\abs{\sigma}}u}{\norm{\grad u}_{\L^\infty}}\abs{\sigma}.
\end{align}}
\end{proposition}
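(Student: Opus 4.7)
My plan is to reduce the iff to the integral characterization of $\partial J^\infty$ in \cref{prop:subdifferential} via the integration-by-parts identity from \cref{prop:int_by_parts}, and then to extract the pointwise statements from a chain of sharp inequalities, in the spirit of the proof of \cref{prop:duality_map}.

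First I would observe that, using \cref{prop:int_by_parts}, the defining identity $\langle -\div\sigma, u\rangle = J^\infty(u)$ of a calibration can be rewritten as
\begin{align*}
    J^\infty(u) = \int_\Omega \grad_{\abs{\sigma}} u \cdot \frac{\d\sigma}{\d\abs{\sigma}} \d\abs{\sigma}.
\end{align*}
Since $\abs{\d\sigma/\d\abs{\sigma}}=1$ $\abs{\sigma}$-a.e.\ (standard for the polar decomposition) and, by \cref{lem:norm-tangential-grad}, $\abs{\grad_{\abs{\sigma}} u}\le J^\infty(u)$ $\abs{\sigma}$-a.e., the Cauchy--Schwarz chain
\begin{align*}
    J^\infty(u) = \int_\Omega \grad_{\abs{\sigma}} u \cdot \frac{\d\sigma}{\d\abs{\sigma}} \d\abs{\sigma}
    \leq \int_\Omega \abs{\grad_{\abs{\sigma}} u} \d\abs{\sigma}
    \leq J^\infty(u)\, \abs{\sigma}(\Omega)
    \leq J^\infty(u)
\end{align*}
collapses to equalities, noting that $J^\infty(u)>0$ because $u\in\lip_0(\Omega)\setminus\{0\}$ vanishes on $\partial\Omega$. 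Hence $\abs{\sigma}(\Omega)=1$, the identity $\abs{\grad_{\abs{\sigma}} u}=J^\infty(u)$ holds $\abs{\sigma}$-a.e., and the unit vectors $\d\sigma/\d\abs{\sigma}$ and $\grad_{\abs{\sigma}}u/J^\infty(u)$ are $\abs{\sigma}$-a.e.\ parallel. The first two facts give~\eqref{eq:pw_subdiff-0}. The converse direction is obtained by reading the same chain backwards: assuming $\abs{\sigma}(\Omega)=1$ and~\eqref{eq:pw_subdiff-0}, integrating over $\abs{\sigma}$ and applying \cref{prop:int_by_parts} delivers $\langle -\div\sigma,u\rangle = J^\infty(u)$, so that $-\div\sigma\in\partial J^\infty(u)$ by \cref{prop:subdifferential}.

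The polar decomposition~\eqref{eq:polar_subdiff} is an immediate corollary: the alignment and magnitude identified above yield $\d\sigma/\d\abs{\sigma} = \grad_{\abs{\sigma}}u / J^\infty(u)$ $\abs{\sigma}$-a.e., and thus $\sigma = (\grad_{\abs{\sigma}}u/\norm{\grad u}_{\L^\infty})\abs{\sigma}$.

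The step I expect to be the main obstacle is the support claim~\eqref{eq:sigma_support}, which requires bridging the weak$^*$ definition of $\grad_{\abs{\sigma}} u$ in \cref{def:tangetial-grad} with the pointwise $\limsup$ definition of $\Omegamax(u)$ in \cref{def:Omegamax}. The plan here is: fix an approximating sequence $\{u_n\}$ satisfying~\eqref{eq:conv-def-tangential-grad}; for each $N\in\N$ set $\phi_N(x)\defeq\sup_{m\ge N}\abs{\grad u_m(x)}$, so that $\abs{P_{\abs{\sigma}}(\cdot,\grad u_m(\cdot))}\le\phi_N$ pointwise for $m\ge N$; since the tail has the same weak$^*$ limit $\grad_{\abs{\sigma}}u$ in $\L^\infty_{\abs{\sigma}}(\Omega,\R^n)$ and the set of vector fields whose modulus is $\le\phi_N$ $\abs{\sigma}$-a.e.\ is convex and weak$^*$ closed, this gives $\abs{\grad_{\abs{\sigma}}u}\le\phi_N$ $\abs{\sigma}$-a.e.; letting $N\to\infty$ produces $\abs{\grad_{\abs{\sigma}} u(x)}\le\limsup_n\abs{\grad u_n(x)}$ $\abs{\sigma}$-a.e. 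Combining this with the already established identity $\abs{\grad_{\abs{\sigma}}u}=J^\infty(u)$ $\abs{\sigma}$-a.e.\ and the uniform bound $\abs{\grad u_n}\le J^\infty(u)$ from~\eqref{eq:conv-def-tangential-grad-a} forces $\limsup_n\abs{\grad u_n(x)}=J^\infty(u)$ $\abs{\sigma}$-a.e., which by \cref{def:Omegamax} means precisely that $\abs{\sigma}$-a.e.\ point lies in $\Omegamax(u)$.
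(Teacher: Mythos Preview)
Your argument is correct and, for the equivalence and the polar decomposition, it coincides with the paper's proof: both directions are handled via \cref{prop:int_by_parts} and \cref{prop:subdifferential}, and the collapsing chain of inequalities together with \cref{lem:norm-tangential-grad} is exactly what the paper does.

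The one genuine difference is how you obtain the support statement~\eqref{eq:sigma_support}. The paper inserts the approximating sequence directly into the chain and uses a reverse Fatou step,
\begin{align*}
    \limsup_{n\to\infty}\int_\Omega \abs{\grad u_n}\d\abs{\sigma}
    \;\leq\;
    \int_\Omega \limsup_{n\to\infty}\abs{\grad u_n}\d\abs{\sigma}
    \;\leq\; J^\infty(u),
\end{align*}
so that equality throughout forces $\limsup_n\abs{\grad u_n}=J^\infty(u)$ $\abs{\sigma}$-a.e.\ in one stroke. You instead prove a separate pointwise comparison $\abs{\grad_{\abs{\sigma}}u}\leq\limsup_n\abs{\grad u_n}$ $\abs{\sigma}$-a.e.\ via weak$^*$ closedness of the sets $\{v:\abs{v}\leq\phi_N\text{ a.e.}\}$, and then combine it with $\abs{\grad_{\abs{\sigma}}u}=J^\infty(u)$ from your shorter chain. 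Both routes are valid: the paper's is more economical (one long chain does everything), while yours is more modular and, as a byproduct, isolates the pointwise inequality $\abs{\grad_{\abs{\sigma}}u}\leq\limsup_n\abs{\grad u_n}$, which is of independent interest and not explicitly stated in the paper.
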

\begin{proof}
If $\sigma$ fulfills~\labelcref{eq:pw_subdiff-0} and $\abs{\sigma}(\Omega)=1$, we obtain, using \cref{prop:int_by_parts}, that 
\begin{equation*}
    -\int_\Omega u\d\div\sigma = \int_{\Omega} \nabla_{\abs{\sigma}} u \cdot \frac{\d\sigma}{\d\abs{\sigma}} \d\abs{\sigma} = \int_{\Omega} J^\infty(u) \d\abs{\sigma} = J^\infty(u).
\end{equation*}
By \cref{prop:subdifferential}, this implies $-\div\sigma\in\partial J^\infty(u)$.

Conversely, suppose that $-\div\sigma\in\partial J^\infty(u)$. 
{By \cref{prop:subdifferential} we know $\abs{\sigma}(\Omega)\leq 1$.}
By mollification we can obtain a sequence
$\{u_n\}_{n\in\N} \subset \C_c^\infty(\Omega)$ that satisfies~\labelcref{eq:conv-def-tangential-grad} as in~\cite{bungert2020structural}. 
By the definition of the tangential gradient, for any $\psi \in \L^1_{\abs{\sigma}}(\Omega)$ it holds
\begin{equation*}
    \int_\Omega \psi(x) \cdot \nabla_{\abs{\sigma}} u(x) \d\abs{\sigma}(x) = \lim_{n \to \infty} \int_\Omega \psi(x) \cdot P_{\abs{\sigma}}(x, \nabla u_n(x)) \d\abs{\sigma}(x).
\end{equation*}
Utilizing this for $\psi=\frac{\d\sigma}{\d\abs{\sigma}}\in \L^1_{\abs{\sigma}}(\Omega)$ we get by \cref{prop:int_by_parts}
\begin{align}\label{eq:est-subdiff-ness}
    \begin{split}
    J^\infty(u) &= -\int_\Omega u\d(\div\sigma) \\
    &= \int_{\Omega} \nabla_{\abs{\sigma}} u\cdot \frac{\d\sigma}{\d\abs{\sigma}} \d\abs{\sigma} \\
    &= \lim_{n \to \infty} \int_{\Omega} P_{\abs{\sigma}}(x, \nabla u_n(x)) \cdot \frac{\d\sigma}{\d\abs{\sigma}}(x) \d\abs{\sigma}(x) \\
    &\leq \limsup_{n \to \infty} \int_{\Omega} \abs{P_{\abs{\sigma}}(x, \nabla u_n(x))} \d\abs{\sigma}(x) \\
    &\leq \limsup_{n \to \infty} \int_{\Omega} \abs{\nabla u_n(x)} \d\abs{\sigma}(x) \\
    &\leq \int_{\Omega} \limsup_{n \to \infty}  \abs{\nabla u_n(x)}\d\abs{\sigma}(x) \\
    &\leq \int_{\Omega} J^\infty(u) \d\abs{\sigma} \\
    &= J^\infty(u).
    \end{split}
\end{align}
Therefore, all inequalities are satisfied as equalities and in particular, {since $u\neq 0$, it holds $\abs{\sigma}(\Omega)=1$. Furthermore,}
\begin{equation}\label{eq:parallelity-integral}
    \int_{\Omega} \nabla_{\abs{\sigma}} u\cdot \frac{\d\sigma}{\d\abs{\sigma}} \, \d\abs{\sigma} = \int_{\Omega} J^\infty(u) \, \d\abs{\sigma}.
\end{equation}
By \cref{lem:norm-tangential-grad} we have that
\begin{equation*}
    \nabla_{\abs{\sigma}} u\cdot \frac{\d\sigma}{\d\abs{\sigma}} \leq \abs{\nabla_{\abs{\sigma}} u} \leq J^\infty(u) \quad \text{$\abs{\sigma}$-\text{a.e.},}
\end{equation*} 
hence, {since $u\neq 0$}, equality in~\labelcref{eq:parallelity-integral} is only possible if~\labelcref{eq:pw_subdiff-0} holds.
Furthermore,~\labelcref{eq:est-subdiff-ness} implies that
\begin{equation*}
    \limsup_{n \to \infty}  \abs{\nabla u_n(x)} = J^\infty(u) \quad \abs{\sigma}-\text{a.e.},
\end{equation*}
and therefore~\labelcref{eq:sigma_support} holds.

{For the polar decomposition \labelcref{eq:polar_subdiff} we compute using \cref{lem:norm-tangential-grad}:
\begin{align*}
    \norm{\nabla u}_{\L^\infty}=\nabla_{\abs{\sigma}}u\cdot\frac{\d\sigma}{\d\abs{\sigma}} \leq \abs{\nabla_{\abs{\sigma}}u}\leq \norm{\nabla u}_{\L^\infty},\quad\abs{\sigma}-a.e.
\end{align*}
Hence, equality holds true and $\nabla_{\abs{\sigma}}u$ and $\frac{\d\sigma}{\d\abs{\sigma}}$ are linearly dependent $\abs{\sigma}$-a.e. which implies
\begin{align*}
    \sigma=\frac{\d\sigma}{\d\abs{\sigma}}\abs{\sigma} = \frac{\nabla_{\abs{\sigma}}u}{\norm{\nabla u}_{\L^\infty}}\abs{\sigma}.
\end{align*}}
\end{proof}

In the previous proposition we have proved that calibrations $\sigma$ are concentrated in the set $\Omegamax(u)$ and parallel to the tangential gradient with respect to $\abs{\sigma}$. {We started by defining the conditional gradient  \cref{def:tangetial-grad} using an approximating sequence and then obtained an integration by parts formula in \cref{prop:int_by_parts}, which we later used in \cref{prop:subdiff-pointwise}. An alternative and in some sense complimentary route is to use the pairing measure $\llangle\nabla u,\sigma\rrangle$, which introduced by \v{S}ilhav{\'y} in \cite{vsilhavy2008normal} and for which the integration by parts formula is part of the definition.}

\begin{proposition}[\v{S}ilhav{\'y} \cite{vsilhavy2008normal,vsilhavy2009divergence}]\label{prop:pairing}
Let $u\in\lip_0({\Omega})$ and $\sigma\in\calD\calM(\Omega,\R^n)$.
There exists a unique signed measure $\llangle\nabla u,\sigma\rrangle\in\calM(\Omega)$ such that
\begin{align}
    -\int_\Omega u\d\div\sigma &= \int_\Omega\d\llangle\nabla u,\sigma\rrangle, \\
    \abs{\llangle\nabla u,\sigma\rrangle}(U) &\leq \norm{\nabla u}_{\L^\infty(U)}\abs{\sigma}(U),\qquad\forall U\subset\Omega\text{ open}.
\end{align}
\end{proposition}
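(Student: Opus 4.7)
The plan is to follow \v{S}ilhav\'y's construction and define $\llangle\nabla u,\sigma\rrangle$ via the Riesz representation theorem applied to a suitable linear functional on $C_c(\Omega)$. The guiding heuristic is that $\llangle\nabla u,\sigma\rrangle$ should morally agree with $\nabla u\cdot \sigma$; substituting $\varphi\sigma$ in place of $\sigma$ in the desired identity (1) and using the formal Leibniz rule $\div(\varphi\sigma)=\varphi\div\sigma+\nabla\varphi\cdot\sigma$ suggests setting
\begin{equation*}
L(\varphi)\defeq -\int_\Omega u\varphi\,\d\div\sigma \;-\;\int_\Omega u\nabla\varphi\cdot\d\sigma,\qquad \varphi\in\C_c^\infty(\Omega).
\end{equation*}
Both integrals are well defined since $u\varphi\in\C_c(\Omega)$ and $u\nabla\varphi$ is a compactly supported continuous vector field.

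The analytic crux is to establish the local bound $\abs{L(\varphi)}\leq\norm{\varphi}_\infty\norm{\nabla u}_{\L^\infty(U)}\abs{\sigma}(U)$ for every open $U\Subset\Omega$ and every $\varphi\in\C_c^\infty(U)$. I would prove this by mollification: for smooth $u_\eps$ approximating $u$ on a slightly enlarged neighborhood of $\supp\varphi$, classical integration by parts gives $L_\eps(\varphi)=\int\varphi\,\nabla u_\eps\cdot\d\sigma$, and hence the bound with $\norm{\nabla u_\eps}_{\L^\infty}$ in place of $\norm{\nabla u}_{\L^\infty(U)}$. Since $u_\eps\to u$ uniformly on $\supp\varphi$ and $\limsup_{\eps\downarrow 0}\norm{\nabla u_\eps}_{\L^\infty(\supp\varphi)}\leq \norm{\nabla u}_{\L^\infty(U)}$, passing to the limit gives the local bound for $L$.

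With this bound in hand, $L$ extends by density of $\C_c^\infty(\Omega)$ in $\C_c(\Omega)$ to a continuous linear functional on $\C_c(\Omega)$, which by the Riesz representation theorem is represented by a unique signed Radon measure $\llangle\nabla u,\sigma\rrangle\in\calM(\Omega)$. Taking the supremum of $L(\varphi)$ over $\varphi\in\C_c(U)$ with $\norm{\varphi}_\infty\leq 1$ yields precisely the total variation estimate (2). Uniqueness of the measure is automatic from the Riesz correspondence, provided one fixes the action on test functions via $L$; the two listed properties serve to pin down this action through approximation arguments below.

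Finally, to verify the global identity (1), I would choose cut-offs $\varphi_k\in\C_c^\infty(\Omega)$ with $0\leq\varphi_k\leq 1$, $\varphi_k\nearrow 1$ pointwise, $\supp(\nabla\varphi_k)\subset\{d_\Omega<1/k\}$ and $\abs{\nabla\varphi_k}\leq C/d_\Omega$ (built, e.g., by smoothing a truncation of $k\,d_\Omega$). Since $u\in\lip_0(\Omega)$ satisfies $\abs{u(x)}\leq\norm{\nabla u}_{\L^\infty}\,d_\Omega(x)$, the product $\abs{u\nabla\varphi_k}$ is uniformly bounded and supported in a shrinking strip near $\partial\Omega$; finiteness of $\abs{\sigma}$ then forces $\int u\nabla\varphi_k\cdot\d\sigma\to 0$, while dominated convergence gives $\int u\varphi_k\,\d\div\sigma\to\int u\,\d\div\sigma$ and $\int\varphi_k\,\d\llangle\nabla u,\sigma\rrangle\to\llangle\nabla u,\sigma\rrangle(\Omega)$ (using (2) and finiteness of $\abs{\sigma}$). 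Combining these yields (1). The main obstacle is the mollification step: one must carefully track the Lipschitz constant of $u_\eps$ over neighborhoods that touch $\partial\Omega$, which is where the vanishing-trace assumption $u\in\lip_0(\Omega)$ and the local nature of the bound are both essential.
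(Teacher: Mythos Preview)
Your proposal goes well beyond the paper's own treatment: the paper does not prove this proposition at all but simply cites \v{S}ilhav\'y's original articles for the two assertions. What you have written is essentially a correct sketch of \v{S}ilhav\'y's construction, so there is no alternative route to compare---you have filled in what the paper delegates to the literature.

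One remark worth recording: you are right to flag that properties (1) and (2), as stated, do not by themselves pin down the measure (the global identity fixes only the total mass, and the second condition is only an upper bound). Uniqueness really comes from the defining relation $\int_\Omega\varphi\,\d\llangle\nabla u,\sigma\rrangle=L(\varphi)$ for all $\varphi\in\C_c^\infty(\Omega)$, i.e., from the Riesz correspondence, exactly as you say. The mollification and boundary-cutoff arguments are sound; the only care needed in the mollification step is to first extend $u$ by zero to $\R^n$ (which preserves the Lipschitz constant since $u\in\lip_0(\Omega)$) before convolving, so that $\norm{\nabla u_\eps}_{\L^\infty}\leq\norm{\nabla u}_{\L^\infty}$ holds globally and the local bound follows by restricting to $U$.
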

\begin{proof}
The first statement is a special case of \cite[Thm. 2.3]{vsilhavy2009divergence}.
The second one can be found in \cite[Prop. 5.2]{vsilhavy2008normal}.
\end{proof}

{We can also formulate \cref{prop:subdiff-pointwise}  in the language of the above pairing and obtain a second pointwise characterization of calibrations $\sigma$.}

\begin{proposition}[Pointwise characterization of calibrations, Part 2]\label{prop:subdiff-pointwise-pair}
Let $u\in \lip_0({\Omega})\setminus\{0\}$ and $\sigma\in\calD\calM(\Omega,\R^n)$. 
It holds $-\div\sigma \in \partial J^\infty(u)$ if and only if $\abs{\sigma}(\Omega)=1$ and
\begin{align}
        \llangle\nabla u, \sigma\rrangle &= J^\infty(u) |\sigma|.\label{eq:pw_subdiff_pair}
\end{align}
Here $\llangle\nabla u , \sigma\rrangle\in\calM(\Omega)$ denotes the pairing from \cref{prop:pairing}.
\end{proposition}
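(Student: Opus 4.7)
The plan is to combine the integral characterization of $\partial J^\infty$ from \cref{prop:subdifferential} with the two defining properties of \v{S}ilhav\'y's pairing measure from \cref{prop:pairing}. The core idea is that the first property converts the duality identity defining the subdifferential into an equality of total masses, while the second property gives a bound on $\llangle\nabla u,\sigma\rrangle$ by $J^\infty(u)\abs{\sigma}$ on open sets; these two ingredients must then be tightened into a pointwise equality of measures by a mass-balance argument.

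For the forward direction, I would start from $-\div\sigma\in\partial J^\infty(u)$ and use \cref{prop:subdifferential} to obtain $\abs{\sigma}(\Omega)\leq 1$ together with $\langle -\div\sigma,u\rangle = J^\infty(u)$. The first identity of \cref{prop:pairing} then rewrites this as $\llangle\nabla u,\sigma\rrangle(\Omega)=J^\infty(u)$. The second property, combined with the trivial bound $\norm{\nabla u}_{\L^\infty(U)}\leq J^\infty(u)$, yields $\abs{\llangle\nabla u,\sigma\rrangle}(U)\leq J^\infty(u)\abs{\sigma}(U)$ for every open $U\subset\Omega$. I would then extend this to all Borel sets via outer regularity of finite Radon measures (applied to the Jordan decomposition of the signed pairing) to conclude the signed-measure inequality $\llangle\nabla u,\sigma\rrangle \leq \abs{\llangle\nabla u,\sigma\rrangle} \leq J^\infty(u)\abs{\sigma}$ on $\Omega$.

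Evaluating on $\Omega$ gives the chain $J^\infty(u) = \llangle\nabla u,\sigma\rrangle(\Omega)\leq J^\infty(u)\abs{\sigma}(\Omega)\leq J^\infty(u)$. Since $u\neq 0$ in $\lip_0(\Omega)$ forces $J^\infty(u)>0$, this pins down $\abs{\sigma}(\Omega)=1$. The non-negative measure $J^\infty(u)\abs{\sigma} - \llangle\nabla u,\sigma\rrangle$ then has zero total mass and hence vanishes identically, yielding \labelcref{eq:pw_subdiff_pair}. The converse is essentially a one-liner: if $\abs{\sigma}(\Omega)=1$ and \labelcref{eq:pw_subdiff_pair} holds, then the first property of \cref{prop:pairing} gives $\langle -\div\sigma,u\rangle = \llangle\nabla u,\sigma\rrangle(\Omega) = J^\infty(u)\abs{\sigma}(\Omega) = J^\infty(u)$, and \cref{prop:subdifferential} yields $-\div\sigma\in\partial J^\infty(u)$.

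The one step requiring some care is the upgrade from the open-set inequality provided by \cref{prop:pairing} to the signed-measure inequality on the whole Borel $\sigma$-algebra. This is a standard application of outer regularity of finite Radon measures, but it must be handled on both the positive and negative parts of the Jordan decomposition of $\llangle\nabla u,\sigma\rrangle$ simultaneously. Once this inequality is in place, the rest reduces to the observation that equality of integrals of comparable measures of finite mass forces equality of the measures themselves.
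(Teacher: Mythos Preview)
Your proposal is correct and follows essentially the same route as the paper: both use the open-set bound from \cref{prop:pairing}, extend it to Borel sets by outer regularity, and then close via a mass-balance argument. A minor difference is that the paper cites \cref{prop:subdiff-pointwise} to obtain $\abs{\sigma}(\Omega)=1$ and finishes with a contradiction, whereas you deduce $\abs{\sigma}(\Omega)=1$ directly from the chain $J^\infty(u)=\llangle\nabla u,\sigma\rrangle(\Omega)\leq J^\infty(u)\abs{\sigma}(\Omega)\leq J^\infty(u)$ and then conclude by observing that the non-negative measure $J^\infty(u)\abs{\sigma}-\llangle\nabla u,\sigma\rrangle$ has zero total mass; this makes your argument slightly more self-contained since it avoids the tangential-gradient machinery of \cref{prop:subdiff-pointwise}.
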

\begin{proof}
If $\sigma\in\calD\calM(\Omega,\R^n)$ fulfills~\labelcref{eq:pw_subdiff_pair} and $\abs{\sigma}(\Omega)=1$, we obtain from \cite[Thm. 2.3]{vsilhavy2009divergence}
\begin{equation*}
    -\int_\Omega u\d\div\sigma = \int_{\Omega} \d\llangle\nabla u,\sigma\rrangle = J^\infty(u) \int_{\Omega}  \d\abs{\sigma} = J^\infty(u).
\end{equation*}
By \cref{prop:subdifferential}, this implies $-\div\sigma\in\partial J^\infty(u)$.

Let us now assume that $-\div\sigma\in\partial J^\infty(u)$ for $\sigma\in\calD\calM(\Omega,\R^n)$.
{By \cref{prop:subdiff-pointwise} we know that $\abs{\sigma}(\Omega)=1$.}
According to \cref{prop:pairing} it holds
\begin{align*}
  \abs{\llangle\nabla u , \sigma\rrangle}(U)\leq J^\infty(u)\abs{\sigma}(U),\quad\forall U\subset\Omega\text{ open,}
\end{align*}
which immediately implies
\begin{align*}
    \llangle\nabla u,\sigma\rrangle(U)\leq J^\infty(u)\abs{\sigma}(U),\quad\forall U\subset\Omega\text{ open}.
\end{align*}
Using outer regularity of the measures \cite{tao2010epsilon}, this implies that we have the following inequality on all Borel sets
\begin{align*}
     \llangle\nabla u,\sigma\rrangle \leq J^\infty(u)\abs{\sigma}.
\end{align*}
To show equality, let us assume that there exists $\eps>0$ {and a Borel set $B\subset\Omega$ with $\abs{\sigma}(B)> 0$ such that $\llangle\nabla u,\sigma\rrangle(B)\leq(1-\eps)J^\infty(u)\abs{\sigma}(B)$.
Then it follows
\begin{align*}
    J^\infty(u) 
    &= 
    -\int_\Omega u\d\div\sigma 
    = 
    \int_{\Omega} \d\llangle\nabla u,\sigma\rrangle \leq
    J^\infty(u) \abs{\sigma}(\Omega\setminus B)+
    (1-\eps)\,J^\infty(u)\abs{\sigma}(B) \\
    &\leq (1-\eps\abs{\sigma}(B))\,J^\infty(u).
\end{align*}
Since $u\neq 0$,} this is a contradiction and hence we have shown \labelcref{eq:pw_subdiff_pair}.
\end{proof}

{%
We have made all the necessary preparations to prove out main result, \cref{thm:main}.
\begin{proof}[Proof of \cref{thm:main}]
According to \cref{prop:duality_map,prop:subdifferential} a function $u\in\C_0(\Omega)\setminus\{0\}$ solves $\lambda\Phi_{\C_0(\Omega)}(u)\cap\partial J^\infty(u)\neq \emptyset$ if and only if there exist measures $\mu\in\M(\Omega)$ and $\sigma\in\calD\calM(\Omega,\R^n)$ with $\lambda\mu = -\div\sigma$.
Furthermore, in \cref{prop:duality_map} it was proved that $\mu = \frac{u}{\norm{u}_\infty}\abs{\mu}$.
Therefore, the measure $\nu := \frac{1}{\norm{u}_\infty}\abs{\mu}$ satisfies $\nu(\Omega)=\frac{1}{\norm{u}_\infty}$ and also 
\begin{align*}
    \nu(\Omega\setminus\omegamax(u)) = \frac{1}{\norm{u}_\infty}\abs{\mu}(\Omega\setminus\omegamax(u)) = 0.
\end{align*}
Analogously, \cref{prop:subdiff-pointwise} implies that $\sigma =  \frac{\nabla_{\abs{\sigma}}u}{\norm{\grad u}_{\L^\infty}}\abs{\sigma}$.
Hence, the measure $\tau := \frac{1}{\norm{\grad u}_{\L^\infty}}\abs{\sigma}$ satisfies $\tau(\Omega) = \frac{1}{\norm{\grad u}_{\L^\infty}}$ and also
\begin{align*}
    \tau(\Omega\setminus\Omegamax(u)) = \frac{1}{\norm{\grad u}_{\L^\infty}}\abs{\sigma}(\Omega\setminus\Omegamax(u)) = 0.
\end{align*}
Furthermore, since $\abs{\sigma}$ and $\tau$ differ just by a non-zero constant multiple, it also holds that $\grad_{\abs{\sigma}}u=\grad_{\tau}u$.
Hence, $\lambda\mu = -\div\sigma$ is equivalent to $\lambda\nu u = -\div(\tau\grad_\tau u)$ with the above choices of $\nu$ and $\tau$.

Finally, the statement that $\sigma = \tau\grad_\tau u$ satisfies $\llangle\grad u,\sigma\rrangle = \norm{\grad u}_{\L^\infty}\abs{\sigma}$ is the statement of \cref{prop:subdiff-pointwise-pair}.
This concludes the proof.
\end{proof}
}

\section{Role of Distance Functions}
\label{sec:distance_fct}

In the previous section we have characterized the nonlinear eigenvalue problem \labelcref{eq:eigenvalue_prob_inf} which, in particular, is fulfilled by all minimizers of the Rayleigh quotient \labelcref{eq:rayleigh_inf}.
Now, we study the relations between general minimizers and the distance function, which is always a minimizer of the Rayleigh quotient but no infinity ground state or infinity harmonic potential, in general.

We first recall the well-known fact that the distance function is pointwise maximal among all minimizers of the Rayleigh quotient and show that its gradients are parallel to gradients of general minimizers, where the latter are maximal. 
Then, we construct a inner distance function, which is the distance function to a generalized inball of the domain $\Omega$, and show that it is quasi pointwise minimal among all minimizers.
The simple consequence is the known uniqueness of minimizers on stadium-like domains~\cite{yu2007some}, where the inner and the normal distance function coincide.

Very important for our following arguments is the fact that the distance function $d_\Omega$ is pointwise maximal among all minimizers of $R^\infty$.
For self-containedness we include the proof.
We also show that the high ridge $\highridge$, defined in \labelcref{eq:highridge}, where the distance function attains its maximum, contains the set of maximal points of any other minimizer of the Rayleigh quotient \labelcref{eq:rayleigh_inf}.
\begin{proposition}[Maximality of the distance function]\label{prop:maximality}
Let $u$ be a minimizer of $R^\infty$ with $\norm{\nabla u}_{\L^\infty}=1$, and let $d_\Omega$ denote the distance function of $\partial\Omega$. Then it holds that $\abs{u}\leq d_\Omega$
and
\begin{align}\label{eq:max_set_inclusion}
    \omegamax(u)\subset\omegamax(d_\Omega)=\highridge.
\end{align}
\end{proposition}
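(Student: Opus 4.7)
The plan is to exploit the fact that $u$ is Lipschitz with Lipschitz constant exactly $\norm{\nabla u}_{\L^\infty} = 1$ (since $u \in \lip_0(\Omega)$ with zero boundary values means the Lipschitz constant coincides with the $\L^\infty$-norm of the gradient), combined with the trivial identity $\lambda_\infty = 1/\inradius$ from \labelcref{eq:infty-eigenvalue_inradius}. The entire proof should be elementary and not require any of the subdifferential machinery of \cref{sec:subdiffs}; this is basically a direct calculation.

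First, I would prove the pointwise bound $\abs{u} \leq d_\Omega$. Pick any $x \in \Omega$. Since $u$ vanishes on $\partial\Omega$ and is $1$-Lipschitz, for every $y \in \partial\Omega$ we have
\begin{equation*}
    \abs{u(x)} = \abs{u(x) - u(y)} \leq \norm{\nabla u}_{\L^\infty} \abs{x - y} = \abs{x-y}.
\end{equation*}
Taking the infimum over $y \in \partial\Omega$ yields $\abs{u(x)} \leq d_\Omega(x)$, which is the first claim.

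Next, I would derive the inclusion \labelcref{eq:max_set_inclusion}. The equality $\omegamax(d_\Omega) = \highridge$ is immediate from the definitions \labelcref{eq:highridge} and \cref{def:omegamax} together with the fact that $d_\Omega \geq 0$ and $\norm{d_\Omega}_\infty = \inradius$. For the inclusion, note that since $u$ minimizes $R^\infty$ and $\norm{\nabla u}_{\L^\infty} = 1$, we have $\norm{u}_\infty = 1/\lambda_\infty = \inradius$ by \labelcref{eq:infty-eigenvalue_inradius}. Now, for any $x \in \omegamax(u)$, the definition gives $\abs{u(x)} = \norm{u}_\infty = \inradius$, and combining with the pointwise bound just proved,
\begin{equation*}
    \inradius = \abs{u(x)} \leq d_\Omega(x) \leq \inradius,
\end{equation*}
so $d_\Omega(x) = \inradius$, i.e.\ $x \in \highridge$.

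There is really no obstacle here beyond correctly bookkeeping the normalization constants: the key observation is simply that for zero-boundary Lipschitz functions the Lipschitz constant agrees with $\norm{\nabla u}_{\L^\infty}$, and the distance function is the extremal such function attaining equality in the inequality $|u(x)-u(y)|\leq \norm{\nabla u}_{\L^\infty}|x-y|$ when $y\in\partial\Omega$. The only subtlety worth flagging is that the proposition asserts $\omegamax(d_\Omega)=\highridge$, which is a matter of unraveling definitions and noting that $d_\Omega$ is non-negative so that $|d_\Omega|=d_\Omega$ and its maximum points are exactly the high ridge.
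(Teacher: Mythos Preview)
Your proof is correct and follows essentially the same approach as the paper: both use the $1$-Lipschitz property of $u$ together with $u\vert_{\partial\Omega}=0$ to obtain $\abs{u}\leq d_\Omega$, and then combine this with $\norm{u}_\infty=\norm{d_\Omega}_\infty=\inradius$ to deduce the inclusion of the maximal sets. The only cosmetic difference is that the paper fixes a projection point $x_\Omega\in\partial\Omega$ whereas you take the infimum over all boundary points, and you make the identity $\omegamax(d_\Omega)=\highridge$ explicit while the paper leaves it implicit.
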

\begin{proof}
For $x\in\Omega$ we let $x_\Omega\in\argmin_{y\in\partial\Omega}|y-x|$ denote a projection onto the boundary.
Then using the Lipschitz continuity of $u$ it holds
\begin{align*}
    \abs{u(x)}=\abs{u(x)-u(x_\Omega)}\leq |x-x_\Omega|=d_\Omega(x),
\end{align*}
which proves the first claim.
Since both $u$ and $d_\Omega$ are minimizers it holds 
$$\norm{d_\Omega}_\infty=\norm{u}_\infty=\abs{u(x)}\leq d_\Omega(x)\leq\norm{d_\Omega}_\infty,\quad x\in\omegamax(u)$$
which proves \labelcref{eq:max_set_inclusion}.
\end{proof}
We make another observation: using \cref{prop:maximality} we can show show that gradients of minimizers of the Rayleigh quotient are parallel to gradients of the distance function on $\Omegamax(u)$.

\begin{proposition}[Parallelity of the gradients]\label{prop:parallelity_gradients}
Let $u,v\in\lip_0(\Omega)$ be non-negative minimizers of $R^\infty$ and assume that $\omegamax(v)=\highridge$ (e.g., $v$ could be the distance function \labelcref{eq:distance_fct}). Then it holds
\begin{align}
    {\nabla_{\tau} u\cdot\nabla_{\tau} v=|\nabla_{\tau} u|\,|\nabla_{\tau} v|,\quad\text{$\tau$-\text{a.e.} in $\Omega$},}
\end{align}
where $\tau\in\calM_+(\Omega)$ solves the optimality condition~\labelcref{eq:eigenfunction_div} for $u$.
\end{proposition}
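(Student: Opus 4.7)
My plan is to test the divergence-form eigenvalue equation for $u$ against the second minimizer $v$ via the integration-by-parts formula and extract the parallelity from a chain of sharp inequalities.

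By scale invariance of the claimed identity, I may assume $\norm{\nabla u}_{\L^\infty}=\norm{\nabla v}_{\L^\infty}=1$, so that $\norm{u}_\infty=\norm{v}_\infty=\inradius$ and $\lambda_\infty=1/\inradius$. By \cref{prop:maximality}, $\omegamax(u)\subset\highridge=\omegamax(v)$, so on $\omegamax(u)$ both $u$ and $v$ attain the common value $\inradius$. From \cref{thm:main} applied to $u$, the measures $\nu,\tau\in\calM_+(\Omega)$ satisfy $\lambda_\infty\nu u=-\div(\tau\nabla_\tau u)$ together with $\nu(\Omega)=1/\inradius$, $\tau(\Omega)=1$, and $\nu$ is concentrated on $\omegamax(u)$. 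Moreover, by \cref{prop:subdiff-pointwise}, the pointwise identity $\nabla_\tau u\cdot\tfrac{\d\sigma}{\d\abs\sigma}=1$ for $\sigma=\tau\nabla_\tau u$ together with \cref{lem:norm-tangential-grad} forces $\abs{\nabla_\tau u}=1$ $\tau$-a.e., hence $\abs{\sigma}=\tau$.

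Next I test the eigenvalue equation against $v\in\lip_0(\Omega)$. Using \cref{prop:int_by_parts},
\begin{align*}
    \lambda_\infty \int_\Omega u\, v\,\d\nu
    = -\int_\Omega v\,\d\div\sigma
    = \int_\Omega \nabla_\tau v\cdot\d\sigma
    = \int_\Omega \nabla_\tau v\cdot\nabla_\tau u\,\d\tau.
\end{align*}
On $\supp\nu\subset\omegamax(u)\subset\omegamax(v)$ we have $u=v=\inradius$, so the left-hand side equals $\lambda_\infty\,\inradius^2\,\nu(\Omega)=\lambda_\infty\,\inradius=1$. For the right-hand side, Cauchy--Schwarz together with $\abs{\nabla_\tau u}=1$ $\tau$-a.e. and \cref{lem:norm-tangential-grad} applied to $v$ yields
\begin{align*}
    \nabla_\tau v\cdot\nabla_\tau u \;\leq\; \abs{\nabla_\tau v}\,\abs{\nabla_\tau u} \;\leq\; J^\infty(v)\cdot 1 \;=\; 1\qquad\tau\text{-a.e.},
\end{align*}
so the right-hand side is at most $\tau(\Omega)=1$. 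Equality throughout forces $\nabla_\tau u\cdot\nabla_\tau v=\abs{\nabla_\tau u}\abs{\nabla_\tau v}$ $\tau$-almost everywhere, which is the assertion.

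The only subtle step is the first equality above: it requires integration by parts for the $\lip_0$-function $v$ against the divergence-measure field $\sigma$ associated to the different function $u$. This is exactly the content of \cref{prop:int_by_parts}, whose proof only uses that $v$ admits an approximating sequence satisfying \labelcref{eq:conv-def-tangential-grad}, and this is available for any element of $\lip_0(\Omega)$. Beyond that, everything reduces to the measure-theoretic observation that $\omegamax(u)\subset\omegamax(v)$ makes $u\,v$ constant on $\supp\nu$, which is what produces the sharp constant $1$ on the left-hand side and closes the equality chain.
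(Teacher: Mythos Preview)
Your proof is correct and follows essentially the same route as the paper's: test the divergence-form eigenvalue equation for $u$ against $v$ via \cref{prop:int_by_parts}, use $\omegamax(u)\subset\omegamax(v)$ to evaluate the left-hand side exactly, and close a chain of Cauchy--Schwarz/\cref{lem:norm-tangential-grad} inequalities to force pointwise equality $\tau$-a.e. The only cosmetic differences are that you normalize to $\norm{\nabla u}_{\L^\infty}=\norm{\nabla v}_{\L^\infty}=1$ at the outset and explicitly deduce $\abs{\nabla_\tau u}=1$ $\tau$-a.e.\ (hence $\abs{\sigma}=\tau$), whereas the paper keeps the constants and uses the polar decomposition from \cref{prop:subdiff-pointwise} directly.
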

\begin{proof}
Thanks to \cref{thm:main} there exist measure {$\nu,\tau$ with $\lambda_\infty\nu u=-\div(\tau\nabla_\tau u)$ where $\supp\nu\subset\omegamax(u)\subset\highridge=\omegamax(v)$.
Letting $\mu:=\nu u\in\Phi_{\C_0(\Omega)}(u)$ and $\sigma:=\tau\grad_\tau u\in\partial J^\infty(u)$, we get}
\begin{align*}
    J^\infty(v)
    &=\lambda_\infty\norm{v}_\infty
    =\lambda_\infty\int_\Omega v\d\mu 
    =-\int_\Omega v\d\div\sigma
    =\int_\Omega\nabla_{\abs{\sigma}}v\cdot\d\sigma\\
    &=\int_\Omega\nabla_{\abs{\sigma}}v\cdot\frac{\nabla_{\abs{\sigma}}u}{\norm{\nabla u}_{\L^\infty}}\d\abs{\sigma} 
    \leq \int_\Omega\abs{\nabla_{\abs{\sigma}}v}\frac{\abs{\nabla_{\abs{\sigma}}u}}{\norm{\nabla u}_{\L^\infty}}\d\abs{\sigma}\\
    &\leq J^\infty(v),
\end{align*}
where we used \cref{lem:norm-tangential-grad} for the last inequality.
Hence, $\nabla_{\abs{\sigma}}u\cdot\nabla_{\abs{\sigma}} v-|\nabla_{\abs{\sigma}} u||\nabla_{\abs{\sigma}} v|$ integrates to zero with respect to $\abs{\sigma}$, despite being non-positive. This is only possible, if the expression equals zero $\sigma$-\text{a.e.}
{Using $\abs{\sigma}=\tau\,J^\infty(u)$ yields the desired statement.}
\end{proof}

Next we study the role of another distance function which is essentially pointwise minimal among all minimizers of $R^\infty$.
To this end we define the generalized inball of $\Omega$ as
\begin{align}\label{eq:inball}
    \Omegain\defeq \{x\in\Omega\st\dist(x,\highridge)< \inradius\},
\end{align}
which is a ball if the high ridge $\highridge$ is a singleton and a stadium-like domain otherwise.
Now we define the distance function on $\Omegain$ extended by zero as
\begin{align}
    \din(x)=
    \begin{cases}
    \dist(x,\partial\Omegain),\quad&x\in\Omegain,\\
    0,\quad &x\in\Omega\setminus\Omegain,
    \end{cases}
\end{align}
and we refer to it as inner distance function.

\begin{proposition}
The inner distance function $\din$ fulfills the relation
\begin{align}\label{eq:formula_inner}
    \din(x) = \max(\inradius-\dist(x,\highridge),0),\quad x\in\Omega.
\end{align}
\end{proposition}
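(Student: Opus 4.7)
The plan is to establish the identity by separating into cases and proving two opposite inequalities on $\Omegain$. For $x\in\Omega\setminus\Omegain$ the definition of $\Omegain$ forces $\dist(x,\highridge)\geq\inradius$, so both sides of the claimed equality vanish. Setting $d\defeq\dist(x,\highridge)<\inradius$ for $x\in\Omegain$, the remaining task is to show $\din(x)=\inradius-d$.

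For the lower bound $\din(x)\geq\inradius-d$ I would first observe that the topological boundary $\partial\Omegain\subseteq\R^n$ coincides with the level set $\{y\in\closure\Omega\st\dist(y,\highridge)=\inradius\}$; indeed, every $y\in\partial\Omega$ satisfies $\dist(y,\highridge)\geq\inradius$, because each $z\in\highridge$ has $d_\Omega(z)=\inradius$ and hence $|y-z|\geq\inradius$ for any boundary point $y$. Using the $1$-Lipschitz continuity of $\dist(\cdot,\highridge)$, for any $y\in\partial\Omegain$ one has $|x-y|\geq\dist(y,\highridge)-\dist(x,\highridge)=\inradius-d$, and taking the infimum over $y$ yields the desired estimate.

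For the reverse inequality $\din(x)\leq\inradius-d$, I would construct an explicit point $y_*\in\partial\Omegain$ realising this distance. Picking $z\in\highridge$ with $|x-z|=d$ (which exists by compactness of $\highridge$) and setting $y_*\defeq z+(\inradius/d)(x-z)$ (with an obvious modification by an arbitrary unit direction when $d=0$), a direct computation gives $|x-y_*|=\inradius-d$ and $|y_*-z|=\inradius$; since $d_\Omega(z)=\inradius$ forces $\closure{B(z,\inradius)}\subseteq\closure\Omega$, we have $y_*\in\closure\Omega$. The only nontrivial verification is $\dist(y_*,\highridge)=\inradius$: the upper bound $\dist(y_*,\highridge)\leq|y_*-z|=\inradius$ is immediate, and the matching lower bound follows from the variational inequality $(x-z)\cdot(z'-z)\leq 0$ for all $z'\in\highridge$ (the projection-onto-convex-set optimality condition) inserted into the expansion $|y_*-z'|^2=|y_*-z|^2-2(y_*-z)\cdot(z'-z)+|z'-z|^2$, since $y_*-z$ is a positive multiple of $x-z$.

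The main difficulty is the variational inequality, which relies on $\highridge$ being convex. This is automatic when $\Omega$ is convex, because $d_\Omega$ is then a concave function and $\highridge$ is one of its superlevel sets; this convexity is also implicit in the statement's description of $\Omegain$ as a ball or a stadium-like domain, and it is precisely what makes the projection $z$ of $x$ onto $\highridge$ remain the closest point of $\highridge$ to the extrapolated $y_*$. Once this structural property is granted, all remaining steps are short geometric computations.
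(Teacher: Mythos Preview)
Your approach mirrors the paper's proof: both dispose of the trivial case $x\notin\Omegain$ and then, for $x\in\Omegain$ with $d=\dist(x,\highridge)$, establish the two inequalities. Your lower bound via the $1$-Lipschitz property of $\dist(\cdot,\highridge)$ is the paper's triangle-inequality step in different clothing, and your explicit boundary point $y_*=z+(\inradius/d)(x-z)$ is exactly the paper's $z_\lambda$ at $\lambda=\inradius-d$.

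The one place you diverge is that you are more careful about verifying $y_*\in\partial\Omegain$, i.e., $\dist(y_*,\highridge)=\inradius$. The paper simply asserts ``$z_\lambda\in\partial\Omegain$ iff $\lambda=\inradius-\abs{x-x_0}$'' from $\abs{z_\lambda-x_0}=\inradius$, but that equality only gives $\dist(z_\lambda,\highridge)\leq\inradius$. You correctly observe that the matching lower bound requires the variational inequality $(x-z)\cdot(z'-z)\leq 0$ for all $z'\in\highridge$, which holds precisely when $\highridge$ is convex (hence automatically for convex $\Omega$). Your caution is warranted: without convexity of $\highridge$ both the step and the proposition itself can fail. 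For instance, if $\Omega=B(p_1,r)\cup B(p_2,r)$ with $0<\abs{p_1-p_2}=2a<2r$, then $\highridge=\{p_1,p_2\}$, $\Omegain=\Omega$, and at the midpoint $m$ one computes $\din(m)=\sqrt{r^2-a^2}>r-a=\inradius-\dist(m,\highridge)$. So the convexity hypothesis you flag is not a weakness of your argument but a genuine requirement that the paper leaves implicit.
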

\begin{proof}
For $x\in\Omega\setminus\Omegain$ the identity is trivially true.
Let therefore $x\in\Omegain$ and we have to show that $\dist(x,\partial\Omegain)=\inradius-\dist(x,\highridge)$.
We can find two points $x_0\in\highridge$ and $\bar{x}\in\partial\Omegain$ such that $\dist(x,\highridge)=\abs{x-x_0}$ and $\dist(x,\partial\Omegain)=\abs{x-\bar{x}}$.
Using the triangle inequality, we can establish the inequality
\begin{align*}
    \dist(x,\partial\Omegain) + \dist(x,\highridge) 
    = \abs{x-\bar{x}} + \abs{x-x_0}
    \geq \abs{\bar{x}-x_0}\geq\inradius.
\end{align*}
Let us now consider the point $z_\lambda\defeq x+\lambda\frac{x-x_0}{\abs{x-x_0}}$.
One can easily compute that
\begin{align*}
    \abs{z_\lambda-x_0}=\abs{x-x_0}+\lambda
\end{align*}
and hence $z_\lambda\in\partial\Omegain$ iff $\lambda=\inradius-\abs{x-x_0}$.
For this value of $\lambda$ we can compute
\begin{align*}
    \dist(x,\partial\Omegain) + \dist(x,\highridge) \leq \abs{x-z_\lambda} + \abs{x-x_0}
    = \lambda + \abs{x-x_0} = \inradius.
\end{align*}
Hence, we have established both inequalities and showed \labelcref{eq:formula_inner}.
\end{proof}

Obviously $\din$ is also a minimizer of $R^\infty$ since it holds
\begin{align*}
    R^\infty(\din)=\frac{\norm{\nabla \din}_\infty}{\norm{\din}_\infty} = \frac{1}{\inradius} = \lambda_\infty.
\end{align*}
Notably, if $\highridge$ is not a singleton, $\din$ is not minimal among all minimizers of $R^\infty$ since cone-like functions with tips in $\highridge$ lie below $\din$. 
However, we have the following result:
\begin{proposition}[Quasi-minimality of the inner distance function]
If $\highridge$ is a singleton, then $\din\leq \abs{u}$ for all $u\in\argmin R^\infty$ with $\norm{\nabla u}_{\L^\infty}=1$.
In general, it holds that $\din\leq \abs{u}$ for all $u\in\argmin R^\infty$ that satisfy $\norm{\nabla u}_{\L^\infty}=1$ and $\argmax \abs{u} = \highridge$.
\end{proposition}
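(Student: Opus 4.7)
My plan is to exploit the fact that under the stated hypotheses $u$ is $1$-Lipschitz on $\closure{\Omega}$ and that $|u|$ attains the value $\inradius$ on all of $\highridge$. First I would record that $R^\infty(u)=\lambda_\infty=1/\inradius$ combined with $\norm{\nabla u}_{\L^\infty}=1$ forces $\norm{u}_\infty=\inradius$, while the vanishing boundary condition makes $\norm{\nabla u}_{\L^\infty}=1$ coincide with the Lipschitz constant of $u$ on $\closure{\Omega}$, so that $|u(x)-u(y)|\leq |x-y|$ for all $x,y\in\closure{\Omega}$.

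For the general case, the assumption $\argmax|u|=\highridge$ ensures that every $y\in\highridge$ satisfies $|u(y)|=\inradius$. For an arbitrary $x\in\Omega$ and any such $y$ I would combine the reverse triangle inequality with $1$-Lipschitz continuity to obtain
\begin{align*}
|u(x)|\geq |u(y)|-|u(x)-u(y)|\geq \inradius-|x-y|.
\end{align*}
Taking the infimum over $y\in\highridge$ gives $|u(x)|\geq\inradius-\dist(x,\highridge)$, and combining this with the trivial bound $|u(x)|\geq 0$ produces $|u(x)|\geq\max(\inradius-\dist(x,\highridge),0)$, which by \eqref{eq:formula_inner} equals $\din(x)$.

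In the singleton case $\highridge=\{x_0\}$, the extra assumption is automatic: \cref{prop:maximality} gives $\omegamax(u)\subset\{x_0\}$, and the non-emptiness of $\omegamax(u)$ (since $u$ is continuous on the compact set $\closure{\Omega}$ with $\norm{u}_\infty=\inradius>0$) forces $\omegamax(u)=\{x_0\}=\highridge$, so the general-case argument applies verbatim. No step poses a real obstacle; the statement is essentially the observation that $\din$ is the pointwise smallest $1$-Lipschitz function on $\closure{\Omega}$ that vanishes on $\partial\Omega$ and attains the value $\inradius$ on all of $\highridge$, a requirement that may fail for minimizers of $R^\infty$ whose maximum set is a strict subset of $\highridge$, which is precisely why the extra hypothesis is indispensable beyond the singleton case.
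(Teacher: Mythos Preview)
Your proof is correct and follows essentially the same approach as the paper's: both exploit $1$-Lipschitz continuity together with $|u|=\inradius$ on $\highridge$ and the formula \labelcref{eq:formula_inner}, the only cosmetic difference being that you argue directly while the paper argues by contradiction. You also spell out explicitly why the singleton case reduces to the general one via \cref{prop:maximality}, which the paper leaves implicit.
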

\begin{proof}
Since the first statement is a special case of the second one, we only prove the latter.
To this end assume that $\argmax \abs{u}=\highridge$.
Outside $\Omegain$ nothing needs to be shown so we assume that there is $x\in\Omegain$ with $\abs{u(x)}<\din(x)$. 
Letting $x_0\in\argmin_{y\in\highridge}|y-x|$ be a projection of $x$ onto the closed set $\highridge$, it holds using~\labelcref{eq:formula_inner}
\begin{align*}
    \abs{u(x_0)-u(x)} &\geq \abs{u(x_0)} - \abs{u(x)} \\
    &>\din(x_0)-\din(x) \\
    &= (\inradius-\dist(x_0,\highridge)-(\inradius-\dist(x,\highridge))\\
    &=\dist(x,\highridge)\\
    &=|x-x_0|,
\end{align*}
which again contradicts the fact that $u$ has unit Lipschitz constant.
\end{proof}

Since on stadium-like domains it holds $\Omegain=\Omega$ and hence $d_\Omega=\din$, one obtains uniqueness of minimizers which take their maximum on $\highridge$.
\begin{corollary}[Stadium-like domains]\label{cor:stadium}
Let $\Omega$ be a stadium-like domain, meaning $\Omega=\{x\in\R^n\st\dist(x,\highridge)<\inradius\}$. 
Then there is exactly one minimizer of $R^\infty$ which takes its maximum on $\highridge$ and it is given by the distance function $\dist(\cdot,\partial\Omega)$.
\end{corollary}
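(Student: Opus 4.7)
The plan is to combine the two previous propositions by observing that on a stadium-like domain the inner and outer distance functions coincide, which forces any minimizer to agree with the distance function.

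First I would verify that for a stadium-like domain $\Omega = \{x : \dist(x,\highridge) < \inradius\}$, the identity $\din = d_\Omega$ holds on $\Omega$. The boundary is $\partial\Omega = \{x : \dist(x,\highridge) = \inradius\}$, so for any $x \in \Omega$, the distance $d_\Omega(x) = \dist(x,\partial\Omega)$ equals $\inradius - \dist(x,\highridge)$ (an elementary triangle-inequality argument, essentially the same one used in the proof of formula~\labelcref{eq:formula_inner}). Comparing with \labelcref{eq:formula_inner} gives $d_\Omega(x) = \din(x)$ throughout $\Omega$, noting that $\Omegain = \Omega$ by definition of a stadium-like domain.

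Next, let $u$ be any minimizer of $R^\infty$ with $\norm{\nabla u}_{\L^\infty}=1$ and $\argmax|u| = \highridge$. The maximality statement \cref{prop:maximality} gives the upper bound $\abs{u} \leq d_\Omega$, and the quasi-minimality statement gives the lower bound $\din \leq \abs{u}$. Since the two distance functions coincide, we conclude $\abs{u} = d_\Omega$ pointwise.

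Finally, I would promote this to $u = d_\Omega$ itself (up to the trivial sign ambiguity inherent in the Rayleigh quotient, since $R^\infty(u) = R^\infty(-u)$). Because $u$ is continuous, $u = 0$ on $\partial\Omega$, and $\{d_\Omega > 0\} = \Omega$ is connected, continuity forces $u$ to keep a constant sign and thus $u = \pm d_\Omega$. Fixing the sign by the convention that $u$ attains a positive maximum on $\highridge$ yields uniqueness. No real obstacle is expected here: the entire argument reduces to recognizing that on stadium-like sets the sandwich $\din \leq |u| \leq d_\Omega$ collapses, and the only mildly delicate point is verifying $\din = d_\Omega$ in this geometry.
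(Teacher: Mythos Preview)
Your proposal is correct and follows essentially the same approach as the paper: the paper simply observes (in the sentence preceding the corollary) that on stadium-like domains $\Omegain=\Omega$ and hence $d_\Omega=\din$, so the sandwich from \cref{prop:maximality} and the quasi-minimality proposition collapses. Your write-up adds welcome detail the paper omits, namely the explicit verification of $\din=d_\Omega$ and the connectedness argument fixing the sign, but the underlying idea is identical.
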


{
\begin{remark}[Sign-changing minimizers]\label{rem:sign-changing-minimizers}
If $\Omega$ is not a stadium-like domain, minimizers of $R^\infty$ exist, which change their sign. To construct such a minimizer $\tilde u$, one can set $\tilde u = \din$ on $\Omegain$ and extend it by a sign-changing function in $\Omega \setminus \Omegain$ in such a way that both $\norm{\tilde  u}_\infty = \norm{\din}_\infty$ and $\norm{\grad \tilde u}_\infty = \norm{\grad \din}_\infty$.
\end{remark}
}

\cref{cor:stadium} simplifies parts of the proof of uniqueness for infinity ground states on stadium-like domains from \cite{yu2007some}:
\begin{corollary}[Uniqueness of infinity ground states]
Let $\Omega$ be a stadium-like domain such that $\highridge$ is Lipschitz-connected.
Then any solution of \labelcref{eq:infty-groundstate} coincides with a multiple of the distance function.
\end{corollary}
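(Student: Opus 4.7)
The strategy is to reduce the statement to the immediately preceding \cref{cor:stadium}. Let $u$ be a solution of~\labelcref{eq:infty-groundstate}; after rescaling, assume $\norm{\nabla u}_{\L^\infty}=1$, so that $\norm{u}_\infty = 1/\lambda_\infty = \inradius$, and (as recalled in \cref{sec:special_solutions}) $u$ is automatically a minimizer of $R^\infty$, which may be taken non-negative. \cref{prop:maximality} then delivers the inclusion $\omegamax(u)\subset\highridge$ and the pointwise bound $u\leq d_\Omega$. If we can upgrade the first inclusion to the equality $\omegamax(u)=\highridge$, then \cref{cor:stadium} identifies $u$ with the distance function, and undoing the normalization gives the result.

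To establish $\highridge\subset\omegamax(u)$, I would run a relative clopen argument on the connected set $\highridge$. The set $A\defeq \omegamax(u)\cap\highridge$ is relatively closed in $\highridge$ by continuity of $u$, and it is non-empty because $u$ attains its maximum on $\closure{\Omega}$ at some point, which by \cref{prop:maximality} must lie in $\highridge$. Fix $x_0\in A$ and an arbitrary nearby $x_1\in\highridge$, and use the Lipschitz-connectedness hypothesis to produce a Lipschitz curve $\gamma\colon[0,L]\to\highridge$ joining $x_0$ to $x_1$. Along $\gamma$ the composition $s\mapsto u(\gamma(s))$ is $1$-Lipschitz, starts at $\inradius$, and is bounded above by $d_\Omega\circ\gamma\equiv\inradius$ (since $\gamma$ stays in $\highridge$). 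I would then invoke the viscosity inequality $\abs{\nabla u}\geq \lambda_\infty u$ embedded in~\labelcref{eq:infty-groundstate}: any point $\gamma(s_*)$ where $u$ drops strictly below $\inradius$ would admit a supporting cone test function built from $\inradius - \dist(\cdot,x_0)$ that, combined with $u\leq\inradius$ on $\gamma$, is incompatible with the supersolution condition. This forces $u\circ\gamma\equiv\inradius$, so $x_1\in A$. Thus $A$ is relatively open as well as closed, and connectedness of $\highridge$ yields $A=\highridge$.

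The main obstacle is precisely this last step: turning the heuristic ``no strict drop along $\gamma$'' into a rigorous viscosity comparison. The Lipschitz-connectedness hypothesis is essential here because it guarantees that $\gamma$ stays inside the set where $d_\Omega\equiv\inradius$, so that the upper envelope $u\leq d_\Omega$ is genuinely saturated on the curve; mere topological connectedness would not suffice, since estimates controlling $u$ along $\gamma$ depend on the arc length and on comparison with $d_\Omega$ along the curve. Once the equality $\omegamax(u)=\highridge$ has been secured, \cref{cor:stadium} closes the argument without further work, which is the promised simplification over parts of the proof in~\cite{yu2007some}.
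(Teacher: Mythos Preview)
Your overall strategy coincides with the paper's: reduce to \cref{cor:stadium} by establishing that any solution of~\labelcref{eq:infty-groundstate} satisfies $\argmax\abs{u}=\highridge$. The paper, however, does not attempt to prove this identity from scratch. It simply invokes the result of \cite{yu2007some} that every infinity ground state attains its maximum on the whole high ridge $\highridge$ whenever $\highridge$ is Lipschitz-connected, and then applies \cref{cor:stadium}. The advertised ``simplification'' is only the step \emph{after} that citation: once $\argmax\abs{u}=\highridge$ is known, \cref{cor:stadium} finishes the argument in one line, replacing the remaining comparison arguments in \cite{yu2007some}.

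Your proposal, by contrast, tries to \emph{reprove} the cited fact from \cite{yu2007some} via a clopen argument on $\highridge$ together with a viscosity comparison along Lipschitz curves. As you yourself flag, the step ``any point $\gamma(s_*)$ where $u$ drops strictly below $\inradius$ would admit a supporting cone test function \ldots\ incompatible with the supersolution condition'' is not made rigorous. The viscosity inequality $\abs{\nabla u}\geq\lambda_\infty u$ encoded in~\labelcref{eq:infty-groundstate} tells you that from any point there is \emph{some} direction of steepest ascent with the right slope, but it does not force that direction to lie along your chosen curve $\gamma\subset\highridge$; the cone $\inradius-\dist(\cdot,x_0)$ touches $u$ from above only at $x_0$, not at a hypothetical interior drop point $\gamma(s_*)$, so it is not a valid test function there. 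Turning this heuristic into an actual proof is essentially the content of the relevant lemma in \cite{yu2007some}, and reproducing it here would undo the simplification. So as written, your argument has a genuine gap at exactly the point the paper sidesteps by citation.
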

\begin{proof}
In \cite{yu2007some} it was proved that every ground state takes its maximum on $\highridge$ if the high ridge is Lipschitz-connected. 
Hence, \cref{cor:stadium} immediately gives uniqueness.
\end{proof}

\begin{example}[Distance function of a ball]\label{ex:distance_function}
Let $\Omega=B_1(0)$ be the $n$-dimensional unit ball and $d_\Omega(x)=1-|x|$ be the distance function to $\partial\Omega$. 
Then $d_\Omega$ satisfies the eigenvalue problem \labelcref{eq:eigenfunction_div} with $\lambda_\infty=1$, {$\nu=\delta_0$}, and {$\tau$ being the} absolutely continuous measure {$\d\tau(x)=-\frac{1}{\omega_n\abs{x}^{n-1}}\d x$}. 
Here $\omega_n$ is the surface area of $B_1(0)$.

To see that this is true, we note that by construction $-\div(\tau\grad_\tau d_\Omega)\in\partial J^\infty(d_\Omega)$ as one can easily check.
It remains to check that it equals $\delta_0=\Phi_{\C_0(\Omega)}(d_\Omega)$ as measure. To this end, we take a smooth test function $\varphi\in \C_c^\infty(\Omega)$ with compact support and compute
\begin{align*}
    \langle{-\div(\tau\grad_\tau d_\Omega)},\varphi\rangle
    &=\int_{B_1(0)}\nabla \varphi(x) \cdot {\grad_\tau d_\Omega(x) \d\tau(x)} =-\int_{B_1(0)}\frac{1}{\omega_n}\frac{x}{|x|^{n}}\cdot\nabla\varphi\dx.
\end{align*}
At this point we observe that $x\mapsto -\frac{1}{\omega_n}\frac{x}{|x|^{n}}$ is precisely the gradient of the fundamental solution $\hat{u}$ to the Laplace equation on $\R^n$, given by
\begin{align*}
    \hat{u}(x)=
    \begin{cases}
        -\frac{1}{2\pi}\log|x|,\quad&n=2,\\
        \frac{1}{\omega_n(n-2)}\frac{1}{|x|^{n-2}},\quad&n>2.
    \end{cases}
\end{align*}
Since $\hat{u}$ solves $-\Delta\hat{u}=\delta_0$ in the sense of distributions, we obtain
\begin{align*}
    \langle{-\div(\tau\grad_\tau d_\Omega)},\varphi\rangle=\langle\nabla\hat{u},\nabla\varphi\rangle=\langle\delta_0,\varphi\rangle,\quad\forall \varphi\in \C_c ^\infty(\Omega),
\end{align*}
which concludes the proof.
\end{example}

In this example we had the situation that {a minimizer $u$ satisfies $\tau\grad_\tau u=\nabla\hat{u}$}, where $\hat{u}$ denotes the fundamental solution of the Laplace equation on the whole space.
However, this is certainly not the case on general domains.
On a square, for instance, the gradients of the distance function do not have the same direction as $\hat{u}$.
Still, also for general domains the {rotation-free} component of the measure {$\tau\grad_\tau u$} equals $\nabla\hat{u}$ as we see from the following more general example.
\begin{example}
Let $u$ be an arbitrary solution of the problem \labelcref{eq:eigenfunction_div} with $\lambda=\lambda_\infty$ and $\norm{\nabla u}_{\L^\infty}=1$.
According to \cref{thm:main} there is $\sigma\in \calD\calM(\Omega,\R^n)$ and $\mu\in\M(\Omega)$ supported in $\omegamax(u)\subset\highridge$ such that
\begin{align*}
    -\div\sigma=\lambda_\infty\mu.
\end{align*}
{Indeed, this holds for $\sigma:=\tau\grad_\tau u$ and $\mu:=\nu u$.}
It is known that the high ridge $\highridge$ has finite $\H^{n-1}$-measure \cite{kraft2016measure} and hence the same holds for $\supp\mu$.
We let $\hat{u}$ denote the solution of the problem 
\begin{align*}
    -\Delta\hat{u}=\frac{1}{\inradius\H^{n-1}(\supp\mu)}\H^{n-1}\restr\supp\mu,
\end{align*}
which can be constructed by integrating Green's function of the Laplacian along $\supp\mu$.

We can rewrite the optimality condition for $u$ as
\begin{align*}
    -\div\sigma=\frac{1}{\inradius\H^{n-1}(\supp\mu)}\H^{n-1}\restr_{\supp\mu}.
\end{align*}
Plugging in the fundamental solution we get
\begin{align*}
    -\div\sigma = -\Delta\hat u = -\div\nabla\hat{u}.
\end{align*}
Consequently, it holds
\begin{align*}
    \sigma=\nabla\hat{u}+\rho
\end{align*}
where $\rho\in\calD\calM(\Omega,\R^n)$ is a suitable divergence-measure field with $\div\rho=0$ such that $\abs{\sigma}=1$.
This decomposition of $\sigma$ is similar to what was shown in \cite{bungert2020structural} for a more regular situation, where $\sigma$ is restricted to be an $\L^2$ vector field.
\end{example}

%%%%%%%%%
\section{Relation to Optimal Transport}\label{sec:ot}

In light of the abstract \cref{prop:dual_quotient} we now investigate duality for minimizers of $R^\infty$ given by~\labelcref{eq:rayleigh_inf}.
Here we discover an interesting relationship to an optimal transport problem, generalizing the observations of \cite{champion2008infty} for infinity ground states to arbitrary minimizers of the Rayleigh quotient $\norm{\nabla u}_{\L^\infty}/\norm{u}_\infty$.

To this end we note that according to \cref{def:dual_fctl} the dual functional of $J^\infty$ is given by
\begin{align}
    J_*^\infty(\sg) = \sup\left\lbrace \int_\Omega u\d\mu \st u\in\lip_0({\Omega}),\; \norm{\nabla u}_{\L^\infty}\leq 1\right\rbrace,\quad \mu \in \calM(\Omega),
\end{align}
which is very similar to the Kantorovich--Rubinstein (KR) norm
\begin{align}\label{eq:kr_norm}
    \norm{\mu}_{\kr(\closure{\Omega})} = \sup\left\lbrace \int_\Omega u\d\mu \st u\in\lip(\closure{\Omega}),\; \norm{\nabla u}_{\L^\infty}\leq 1, \;\norm{u}_\infty\leq 1\right\rbrace,\quad\mu\in\calM(\closure{\Omega}).
\end{align}
The dual norm in $\C_0({\Omega})^*=\calM(\Omega)$ is given by the total variation $|\mu|(\Omega)$.
Hence, the dual quotient to $R^\infty$ is given by
\begin{align}\label{eq:dual_quotient_inf}
    R^\infty_*(\mu) = \frac{\abs{\mu}({\Omega})}{J^\infty_*(\mu)},\quad \mu \in \calM(\Omega)\setminus\{0\}.
\end{align}

Let us first understand the role of $J_*^\infty$ which can be interpreted both as an optimal transport distance to the boundary of the domain and as a norm on a quotient space. 
To see this, we first define the $1$-Wasserstein distance between two probability measures $\mu,\nu\in\calP(\closure{\Omega})$ as
\begin{align}
    W^1(\mu,\delta)=\sup\left\lbrace\int_\Omega u\d\mu - \int_\Omega u\d\delta\st u\in\lip(\closure{\Omega}),\; \norm{\nabla u}_{\L^\infty}\leq 1\right\rbrace.
\end{align}
Note that $\norm{\grad u}_{\L^\infty}$ equals the Lipschitz constant of $u$ with respect to the geodesic distance on $\Omega$ (see \cite[Eq. (1.6)]{bhattacharya1989limits}), which is why we call $W^1(\cdot,\cdot)$ a geodesic Wasserstein distance.
\begin{proposition}\label{prop:Wasserstein_projection}
For every $\mu\in\calP({\Omega})$ it holds 
\begin{align}
     J_*^\infty(\mu) = \inf_{\delta\in\calP(\partial\Omega)} W^1(\mu,\delta).
\end{align}
\end{proposition}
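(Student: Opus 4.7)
The plan is to prove both inequalities separately, sandwiching both quantities between the single scalar $\int_\Omega d_\Omega \, d\mu$.

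\textbf{Easy direction} ($\leq$). For any test function $u \in \lip_0(\Omega)$ with $\|\nabla u\|_{\L^\infty} \leq 1$, we view $u$ as an element of $\lip(\closure\Omega)$ by the continuous extension $u = 0$ on $\partial\Omega$. Then for any $\delta \in \calP(\partial\Omega)$ we have $\int_\Omega u \, d\delta = 0$, so
\begin{equation*}
  \int_\Omega u \, d\mu \;=\; \int_\Omega u \, d\mu - \int_\Omega u \, d\delta \;\leq\; W^1(\mu, \delta).
\end{equation*}
Taking the sup over $u$ on the left and the inf over $\delta$ on the right yields $J_*^\infty(\mu) \leq \inf_{\delta \in \calP(\partial\Omega)} W^1(\mu, \delta)$.

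\textbf{Hard direction} ($\geq$). The idea is to exhibit a specific $\delta_0 \in \calP(\partial\Omega)$ and bound $W^1(\mu, \delta_0)$ from above by $J_*^\infty(\mu)$. I would use the distance function as the pivotal object and show the chain
\begin{equation*}
  \inf_{\delta \in \calP(\partial\Omega)} W^1(\mu, \delta) \;\leq\; \int_\Omega d_\Omega \, d\mu \;\leq\; J_*^\infty(\mu).
\end{equation*}
The second inequality is trivial, since $d_\Omega \in \lip_0(\Omega)$ with $\|\nabla d_\Omega\|_{\L^\infty} = 1$ is admissible in the definition of $J_*^\infty(\mu)$. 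For the first, fix a measurable selection $x \mapsto x_\partial$ of the projection onto $\partial\Omega$ (existence by a standard measurable-selection theorem since $\partial\Omega$ is closed and the set-valued projection has closed values) and let $\delta_0 := (x \mapsto x_\partial)_\# \mu \in \calP(\partial\Omega)$. Then the transport plan $\pi := (\id, x_\partial)_\# \mu \in \Pi(\mu, \delta_0)$ has cost
\begin{equation*}
  \int_{\closure\Omega \times \closure\Omega} |x - y| \, d\pi(x, y) \;=\; \int_\Omega |x - x_\partial| \, d\mu(x) \;=\; \int_\Omega d_\Omega(x) \, d\mu(x),
\end{equation*}
and by Kantorovich duality (or directly from the definition, using this admissible plan to bound the sup in the Kantorovich--Rubinstein dual via primal-dual inequality) we get $W^1(\mu, \delta_0) \leq \int_\Omega d_\Omega \, d\mu$.

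\textbf{Main obstacle.} The only subtle point I foresee is the use of Kantorovich duality with the \emph{geodesic} distance (rather than the Euclidean one), since the supremum in the definition of $W^1$ here is over functions with $\|\nabla u\|_{\L^\infty} \leq 1$, which equals the geodesic Lipschitz constant on $\Omega$. Fortunately the bound I need goes only via the elementary direction ``primal cost dominates dual supremum'': for any $u \in \lip(\closure\Omega)$ with $\|\nabla u\|_{\L^\infty} \leq 1$ and any admissible $\pi$,
\begin{equation*}
  \int u \, d\mu - \int u \, d\delta_0 \;=\; \int (u(x) - u(y)) \, d\pi(x,y) \;\leq\; \int |x-y| \, d\pi,
\end{equation*}
since $|u(x)-u(y)|$ is controlled by the geodesic distance, which is in turn bounded by the length of the straight segment from $x$ to $x_\partial$ (this segment lies in $\closure\Omega$ as it realizes $d_\Omega(x)$). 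Taking the sup over $u$ gives $W^1(\mu,\delta_0) \leq \int d_\Omega \, d\mu$, closing the chain of inequalities and finishing the proof.
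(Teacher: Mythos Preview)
Your proof is correct and takes a genuinely different route from the paper. The paper argues via a minimax theorem: it swaps the $\inf_\delta$ and the $\sup_u$ in the definition of $\inf_\delta W^1(\mu,\delta)$ using Sion-type duality, then observes that $\inf_\delta\int u\,d\delta$ forces a shift of $u$ by its boundary maximum, reducing the supremum to one over $\lip_0(\Omega)$ and hence to $J_*^\infty(\mu)$.

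Your argument instead proves the two inequalities directly. The ``easy'' direction is the same trivial inclusion both proofs need. For the other direction you pivot through the explicit value $\int_\Omega d_\Omega\,d\mu$: on the one hand $d_\Omega$ is an admissible competitor in $J_*^\infty(\mu)$, on the other hand pushing $\mu$ forward along a measurable nearest-point projection $x\mapsto x_\partial$ produces a concrete $\delta_0\in\calP(\partial\Omega)$ and a transport plan with exactly this cost. You correctly flag and resolve the only real subtlety, namely that $W^1$ is defined via the geodesic Lipschitz constant: since the segment $[x,x_\partial]$ realises $d_\Omega(x)$ it must lie in $\closure\Omega$ (any earlier exit would give a closer boundary point), so the fundamental-theorem-of-calculus bound $|u(x)-u(x_\partial)|\le |x-x_\partial|$ holds for any $u$ with $\norm{\nabla u}_{\L^\infty}\le 1$.

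What each approach buys: your argument is more elementary (no minimax theorem, only a measurable selection) and more informative, since it identifies an explicit optimal $\delta_0$ and shows that $J_*^\infty(\mu)=\int_\Omega d_\Omega\,d\mu$ for every probability measure $\mu$. The paper's minimax approach is less constructive but more robust to changes in the setting, since it does not rely on the specific geometry of the nearest-point projection.
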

\begin{proof}
Since the set $\{u \in \lip(\closure{\Omega}) \colon \norm{\nabla u}_{\L^\infty}\leq 1\}$ is convex and weakly-* compact, the set $\{\delta \in \calP(\closure{\Omega}) \colon \supp(\delta) \subset \partial\Omega\}$ is convex and the objective function is weakly-* continuous, we apply the Nonsymmetrical Minmax Theorem from \cite[Th. 3.6.4]{Borwein_Zhu} and obtain
\begin{align*}
    \inf_{\delta\in\calP(\partial\Omega)} W^1(\mu,\delta) &= \sup_{\substack{u \in \lip(\closure{\Omega}) \\ \norm{\nabla u}_{\L^\infty}\leq 1}} \inf_{\delta\in\calP(\partial\Omega)}\int_\Omega u\d\mu - \int_\Omega u\d\delta \\
    &=\sup_{\substack{u \in \lip(\closure{\Omega}) \\ \norm{\nabla u}_{\L^\infty}\leq 1}} \int_\Omega u\d\mu - \sup_{\delta\in\calP(\partial\Omega)}\int_\Omega u\d\delta \\
    &= \sup_{\substack{u \in \lip(\closure{\Omega}) \\ \norm{\nabla u}_{\L^\infty}\leq 1}} \int_\Omega u\d\mu - \sup_{x \in \partial \Omega} \abs{u(x)} \\
    &= \sup_{\substack{u \in \lip(\closure{\Omega}) \\ \norm{\nabla u}_{\L^\infty}\leq 1}} \int_\Omega (u-M_u)\d\mu,
\end{align*}
where we abbreviated $M_u\defeq \sup_{x\in\partial\Omega}|u(x)|$ and used the fact that $\mu$ is a probability measure.
Substituting $u \leftrightarrow u-M_u$ in the supremum, we get 
\begin{align*}
    \inf_{\delta\in\calP(\partial\Omega)} W^1(\mu,\delta) 
    = \sup_{\substack{u\in\lip(\closure{\Omega}) \\ \norm{\nabla u}_{\L^\infty} \leq 1 \\ u\vert_{\partial\Omega}\leq 0}} \int_\Omega u \d\mu
    = \sup_{\substack{u\in\lip_0({\Omega}) \\ \norm{\nabla u}_{\L^\infty} \leq 1}} \int_\Omega u \d\mu 
    = J_*^\infty(\mu).
\end{align*}
Here we used the non-negativity of $\mu$ to conclude that the constraint $u\vert_{\partial\Omega}\leq 0$ can be converted to $u\in\lip_0({\Omega})$.
\end{proof}

For the interpretation of $J_*^\infty$ as a norm, we let $\kr(\closure{\Omega})$ denote the KR-space, i.e., the completion of $\calM(\closure\Omega)$ with respect to the norm~\labelcref{eq:kr_norm}.
The dual of $\kr(\closure{\Omega})$ coincides with $\lip(\closure{\Omega})$.
The space $\kr(\partial\Omega)$, being defined in an analogous manner, can be identified with a closed subspace of $\kr(\closure{\Omega})$ of measures that are zero on $\Omega \setminus \partial\Omega$, which allows us to consider the quotient space
\begin{align}\label{eq:def_kr_delta}
    \kr_\partial(\Omega) \defeq  \kr(\closure{\Omega})/\kr(\partial\Omega),
\end{align}
where the equivalence relation is 
\begin{align}
    \mu \sim \nu \iff \mu - \nu\in \kr(\partial\Omega).
\end{align}
With our notation we already indicate that $\kr_\partial(\Omega)$ depends only on $\Omega$ and not its closure.
The canonical norm on $\kr_\partial(\Omega)$ is given by 
\begin{align}
    \norm{\mu}_{\kr_\partial(\Omega)}\defeq \inf_{\nu\in \kr(\partial\Omega)}\norm{\mu-\nu}_{\kr(\closure{\Omega})}
\end{align}
and we have the following result:
\begin{proposition}\label{prop:dual_KR_delta}
The dual of $\kr_\partial(\Omega)$ is given by $\lip_0({\Omega})$, i.e.,
\begin{align}
    \left(\kr_\partial(\Omega)\right)^* = \lip_0({\Omega}).
\end{align}
\end{proposition}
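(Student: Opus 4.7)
The plan is to invoke the standard Banach-space duality for quotients by closed subspaces: for any Banach space $X$ with closed subspace $Y$, there is a natural isometric isomorphism $(X/Y)^* \cong Y^\perp$, where $Y^\perp \defeq \{\phi \in X^* \st \phi|_Y = 0\}$ is the annihilator of $Y$. Since $\kr(\partial\Omega)$ has already been identified as a closed subspace of $\kr(\closure{\Omega})$, and since $\kr(\closure{\Omega})^* = \lip(\closure{\Omega})$, this yields
\begin{align*}
    (\kr_\partial(\Omega))^* \cong \kr(\partial\Omega)^\perp \subset \lip(\closure{\Omega}).
\end{align*}

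The remaining task is to identify the annihilator $\kr(\partial\Omega)^\perp$ with $\lip_0(\Omega)$. A function $u\in\lip(\closure{\Omega})$ belongs to $\kr(\partial\Omega)^\perp$ if and only if $\int_{\closure{\Omega}} u \d\mu = 0$ for every measure $\mu\in\calM(\closure{\Omega})$ supported on $\partial\Omega$; this is because such measures are dense in $\kr(\partial\Omega)$ by construction (the KR-space is defined as the completion of $\calM(\partial\Omega)$) and the pairing $\mu\mapsto\int u\d\mu$ is continuous with respect to the KR-norm. Testing against Dirac masses $\delta_x$ for arbitrary $x\in\partial\Omega$ then forces $u(x)=0$ on all of $\partial\Omega$, i.e.\ $u\in\lip_0(\Omega)$. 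Conversely, any $u\in\lip_0(\Omega)$ clearly annihilates every measure supported on $\partial\Omega$, so the equivalence $\kr(\partial\Omega)^\perp = \lip_0(\Omega)$ holds as sets of Lipschitz functions, and the identification is easily seen to preserve the norm via the quotient-norm duality.

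The main point to check carefully is the density argument justifying that the pairing $\langle u,\mu\rangle$ extends continuously from $\calM(\partial\Omega)$ to the completion $\kr(\partial\Omega)$ and that its vanishing on $\calM(\partial\Omega)$ suffices. This is an immediate consequence of how the KR-norm is defined---for any $u\in\lip(\closure{\Omega})$ with $\norm{\nabla u}_{\L^\infty},\norm{u}_\infty\leq C$, one has $\abs{\int u\d\mu}\leq C\norm{\mu}_{\kr(\closure{\Omega})}$ by \labelcref{eq:kr_norm}---so the essential content really is the standard quotient-duality theorem together with the pointwise identification via Dirac masses; everything else is formal.
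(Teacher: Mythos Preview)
Your proof is correct and follows exactly the same approach as the paper: invoke the standard isometric isomorphism $(X/Y)^*\cong Y^\perp$ for a closed subspace $Y$ of a Banach space $X$, and then identify the annihilator of $\kr(\partial\Omega)$ inside $\kr(\closure{\Omega})^*=\lip(\closure{\Omega})$ with $\lip_0(\Omega)$. The paper's proof is considerably more terse---it simply asserts the annihilator identification without spelling out the Dirac-mass and density arguments you provide---so your version is a faithful but more detailed rendering of the same idea.
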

\begin{proof}
Since $\kr_\partial(\Omega)$ defined in~\labelcref{eq:def_kr_delta} is a quotient, its dual space coincides with the annihilator of $\kr(\partial\Omega)$ in $\left(\kr(\closure{\Omega})\right)^*=\lip(\closure{\Omega})$, which is given by $\lip_0({\Omega})$.
\end{proof}

\begin{proposition}
$J_*^\infty(\mu)$ is an equivalent norm on $\kr_\partial(\Omega)$. Moreover, if $r_\Omega \leq 1$, then for any $\mu\in\calM(\Omega)$ it holds that
\begin{align}
    J_*^\infty(\mu) = \norm{\mu}_{\kr_\partial(\Omega)}.
\end{align}
\end{proposition}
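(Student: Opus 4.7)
The plan is to reduce the statement to a comparison of two norms on $\lip_0(\Omega)$ via duality. By \cref{prop:dual_KR_delta}, $(\kr_\partial(\Omega))^* = \lip_0(\Omega)$. Because $\kr_\partial(\Omega) = \kr(\closure{\Omega})/\kr(\partial\Omega)$ is a quotient, the dual norm on $\lip_0(\Omega)$ is the one inherited from $\lip(\closure{\Omega})$, namely $\norm{u}_* \defeq \max(\norm{u}_\infty,\lip(u))$. Using the isometric embedding $\kr_\partial(\Omega) \hookrightarrow (\kr_\partial(\Omega))^{**}$, I would write the quotient norm as
\[
    \norm{\mu}_{\kr_\partial(\Omega)} = \sup\left\{\int_\Omega u \d\mu \st u\in\lip_0(\Omega),\ \max(\norm{u}_\infty,\lip(u))\leq 1\right\},
\]
which sits next to $J_*^\infty(\mu) = \sup\{\int u\,\d\mu \st u \in \lip_0(\Omega),\ \lip(u)\leq 1\}$ for easy comparison.

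Next I would use the boundary condition to compare $\lip(u)$ with $\max(\norm{u}_\infty,\lip(u))$ on $\lip_0(\Omega)$. For any $u\in\lip_0(\Omega)$ and $x\in\Omega$, choosing $x_\Omega\in\argmin_{y\in\partial\Omega}\abs{x-y}$ gives $\abs{u(x)} = \abs{u(x)-u(x_\Omega)}\leq \lip(u)\,d_\Omega(x)\leq r_\Omega\lip(u)$, so $\norm{u}_\infty\leq r_\Omega\lip(u)$. Combined with the trivial bound $\lip(u)\leq\max(\norm{u}_\infty,\lip(u))$, this yields
\[
    \lip(u)\ \leq\ \max(\norm{u}_\infty,\lip(u))\ \leq\ \max(r_\Omega,1)\,\lip(u).
\]
Passing these inequalities through to the dual (the admissible sets for the two suprema become nested) gives the sandwich $\norm{\mu}_{\kr_\partial(\Omega)}\leq J_*^\infty(\mu)\leq \max(r_\Omega,1)\norm{\mu}_{\kr_\partial(\Omega)}$, proving equivalence. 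When $r_\Omega\leq 1$ the outer factor collapses to $1$, which forces $J_*^\infty(\mu) = \norm{\mu}_{\kr_\partial(\Omega)}$.

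To formally claim $J_*^\infty$ is a norm on $\kr_\partial(\Omega)$, I would additionally verify that it is well-defined on the quotient: if $\mu-\nu\in\kr(\partial\Omega)$, then by density $\mu-\nu$ is a limit of boundary measures, and since any $u\in\lip_0(\Omega)$ vanishes on $\partial\Omega$ and has bounded KR-action, $\int u\,\d(\mu-\nu)=0$ by continuity. Absolute homogeneity and subadditivity are immediate from the supremum definition, and positive definiteness is inherited from $\norm{\cdot}_{\kr_\partial(\Omega)}$ via the equivalence. The only place where care is required is in justifying the bidual representation of $\norm{\cdot}_{\kr_\partial(\Omega)}$ as a supremum over $\lip_0(\Omega)$-functions with $\max$-unit norm—i.e., using that $\kr_\partial(\Omega)$ embeds isometrically into its bidual—but this is standard once \cref{prop:dual_KR_delta} is in hand. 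The rest is a transparent comparison of norms, so I do not expect any serious obstacle.
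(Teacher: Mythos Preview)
Your proposal is correct and follows essentially the same route as the paper: both express $\norm{\mu}_{\kr_\partial(\Omega)}$ via \cref{prop:dual_KR_delta} as a supremum over $u\in\lip_0(\Omega)$ with the $\max$-constraint, then use the key inequality $\norm{u}_\infty\leq r_\Omega\norm{\nabla u}_{\L^\infty}$ to compare the constraint sets and obtain the sandwich with factor $\max(r_\Omega,1)$. The paper carries out the rescaling directly inside the supremum rather than phrasing it as a norm comparison on $\lip_0(\Omega)$ followed by dualization, and it does not separately verify well-definedness on the quotient (which is implicit), but these are cosmetic differences.
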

\begin{proof}
Owing to \cref{prop:dual_KR_delta} we can express the norm on $\kr_\partial(\Omega)$ by duality as follows 
\begin{align*}
    \norm{\mu}_{\kr_\partial(\Omega)} = \sup_{\substack{u\in\lip_0({\Omega}) \\ \norm{\nabla u}_{\L^\infty}\leq 1 \\ \norm{u}_\infty \leq 1}} \int_\Omega u \d\mu \leq \sup_{\substack{u\in\lip_0({\Omega}) \\ \norm{\nabla u}_{\L^\infty}\leq 1}} \int_\Omega u \d\mu = J_*^\infty(\mu).
\end{align*}
It is obvious that for any $u\in\lip_0({\Omega})$ with $\norm{\nabla u}_{\L^\infty}\leq 1$ it holds that $\norm{u}_\infty\leq r_\Omega$. Let $t \defeq  \max\{1,r_\Omega\}$. Then we have
\begin{align*}
    J_*^\infty(\mu) &= \sup_{\substack{u\in\lip_0({\Omega}) \\ \norm{\nabla u}_{\L^\infty}\leq 1}} \int_\Omega u \d\mu = t \sup_{\substack{u\in\lip_0({\Omega}) \\ \norm{\nabla u}_{\L^\infty}\leq 1}} \int_\Omega \frac{1}{t} u \d\mu 
    = t \sup_{\substack{u\in\lip_0({\Omega}) \\ \norm{\nabla u}_{\L^\infty}\leq 1 \\ \norm{u}_\infty \leq t}} \int_\Omega \frac{1}{t} u \d\mu \\
    &= t \sup_{\substack{u\in\lip_0({\Omega}) \\ \norm{\nabla u}_{\L^\infty}\leq \frac{1}{t} \\ \norm{u}_\infty \leq 1}} \int_\Omega  u \d\mu 
    \leq t \sup_{\substack{u\in\lip_0({\Omega}) \\ \norm{\nabla u}_{\L^\infty}\leq 1 \\ \norm{u}_\infty \leq 1}} \int_\Omega  u \d\mu 
    = t \norm{\mu}_{\kr_\partial(\Omega)},
\end{align*}
hence the equivalence. If $r_\Omega \leq 1$, we get that $t=1$ and
\begin{align*}
\norm{\mu}_{\kr_\partial(\Omega)}=J_*^\infty(\mu),
\end{align*}
which completes the proof.
\end{proof}
\begin{remark}
If $r_\Omega \geq 1$, we can define equivalent $\kr$ norms as follows (cf.~\labelcref{eq:kr_norm})
\begin{align*}
    \norm{\mu}_{\kr(\closure{\Omega})} &= \sup\left\lbrace \int_\Omega u\d\mu \st u\in\lip(\closure{\Omega}),\; \norm{\nabla u}_{\L^\infty}\leq 1, \;\norm{u}_\infty\leq r_\Omega \right\rbrace,\quad\mu\in\calM(\Omega), \\
    \norm{\mu}_{\kr(\closure{\Omega})} &= \sup\left\lbrace \int_\Omega u\d\mu \st u\in\lip(\closure{\Omega}),\; \norm{\nabla u}_{\L^\infty}\leq 1, \;\norm{u\vert_{\partial\Omega}}_\infty\leq 1 \right\rbrace,\quad\mu\in\calM(\Omega).
\end{align*}
In both cases we get that $\norm{\mu}_{\kr_\partial(\Omega)}=J_*^\infty(\mu)$ regardless of the value of $r_\Omega$.
\end{remark}

Analysing the minimizers of $R_*^\infty$ is fairly easy since they can be explicitly computed.

\begin{proposition}\label{prop:minimizers_dual_rayleigh}
The minimizers of $R^\infty_*$ are given by all non-zero multiples of $\mu\in\calP(\closure{\Omega})$ with $\supp(\mu)\subset\highridge$.
\end{proposition}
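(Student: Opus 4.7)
The plan is to prove the two inclusions separately, using \cref{prop:dual_quotient} to pin down the value of $\inf R^\infty_*$. Since $R^\infty$ attains its infimum (e.g.\ at the distance function $d_\Omega$), \cref{prop:dual_quotient} gives $\inf_{\mu \in \calM(\Omega)\setminus\{0\}} R^\infty_*(\mu) = \lambda_\infty = 1/\inradius$. Hence $\mu$ is a minimizer of $R^\infty_*$ if and only if $R^\infty_*(\mu) = 1/\inradius$, i.e.\ $J^\infty_*(\mu) = \inradius\,|\mu|(\Omega)$. Since $R^\infty_*$ is absolutely one-homogeneous, the set of minimizers is a cone and we may restrict attention to $\mu$ with $|\mu|(\Omega)=1$.

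For the easy inclusion, let $\mu$ be a probability measure with $\supp(\mu) \subset \highridge$. Using the admissible test function $u = d_\Omega \in \lip_0(\Omega)$ with $\norm{\nabla d_\Omega}_{\L^\infty} = 1$, I would bound
\begin{equation*}
J^\infty_*(\mu) \geq \int_\Omega d_\Omega \d\mu = \inradius,
\end{equation*}
using that $d_\Omega \equiv \inradius$ on $\highridge$. The reverse bound $J^\infty_*(\mu) \leq \inradius$ comes from the elementary observation that any admissible $u \in \lip_0(\Omega)$ with $\norm{\nabla u}_{\L^\infty} \leq 1$ satisfies the pointwise estimate $|u(x)| \leq d_\Omega(x) \leq \inradius$, obtained by projecting $x$ onto $\partial\Omega$. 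Therefore $R^\infty_*(\mu) = 1/\inradius = \lambda_\infty$, and the same conclusion extends to any non-zero scalar multiple by homogeneity.

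For the reverse inclusion, I would take a minimizer $\mu$ with $|\mu|(\Omega)=1$, so that $J^\infty_*(\mu) = \inradius$. A maximizing sequence for $J^\infty_*(\mu)$ is uniformly bounded by $\inradius$ and uniformly $1$-Lipschitz, so by Arzelà--Ascoli (with the compactness of $\closure\Omega$) it admits a uniformly convergent subsequence whose limit $u^\star \in \lip_0(\Omega)$ with $\norm{\nabla u^\star}_{\L^\infty}\leq 1$ attains the supremum. Using the Hahn decomposition $\mu = \mu_+ - \mu_-$ and the pointwise estimate $|u^\star|\leq d_\Omega\leq \inradius$, the identity
\begin{equation*}
\inradius = \int_\Omega u^\star \d\mu_+ - \int_\Omega u^\star \d\mu_- \leq \inradius\,\mu_+(\Omega) + \inradius\,\mu_-(\Omega) = \inradius
\end{equation*}
forces $u^\star \equiv \inradius$ $\mu_+$-a.e.\ and $u^\star \equiv -\inradius$ $\mu_-$-a.e. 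Since $|u^\star(x)| = \inradius$ implies $d_\Omega(x)=\inradius$, i.e.\ $x\in\highridge$, both $\mu_+$ and $\mu_-$ are concentrated on $\highridge$.

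The main obstacle is then to rule out that both $\mu_+$ and $\mu_-$ are non-zero, so that $\mu$ is a signed (not just a scaled) probability measure. The intended argument is geometric: if $x\in\supp\mu_+$ and $y\in\supp\mu_-$, the Lipschitz constraint gives $2\inradius = |u^\star(x) - u^\star(y)|\leq |x-y|$, while both $x$ and $y$ lie in $\highridge$, so their closed balls $B_{\inradius}(x), B_{\inradius}(y) \subset \Omega$ must be essentially disjoint. One then shows (e.g.\ via the connectivity properties of $\highridge$ exploited in \cref{sec:distance_fct}, or by contradicting the maximality of $d_\Omega$ on $\highridge$) that this configuration is incompatible with $\mu$ being a minimizer. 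Once $\mu$ has a single sign, it is, up to normalization, a probability measure on $\highridge$, completing the proof.
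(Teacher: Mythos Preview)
Your approach diverges from the paper's in a significant way. The paper dispenses with the sign issue in one line: it observes that $J_*^\infty(\mu)\leq J_*^\infty(|\mu|)$ (since for any admissible $u$, the function $|u|$ is again admissible and $\int u\,\d\mu\leq\int|u|\,\d|\mu|$), so one may restrict the maximization of $J_*^\infty$ to probability measures from the outset. After that, the paper simply notes $J_*^\infty(\mu)\leq\inradius$ for all $\mu\in\calP(\Omega)$ and $J_*^\infty(\mu)\geq\int d_\Omega\,\d\mu=\inradius$ when $\supp\mu\subset\highridge$. There is no need for \cref{prop:dual_quotient}, Arzel\`a--Ascoli, or an explicit maximizer $u^\star$; your machinery for the reverse inclusion (Hahn decomposition, attainment of the supremum) is correct but considerably heavier than what the paper uses.

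The genuine gap in your proposal is the final paragraph. Your argument that $\mu_+$ and $\mu_-$ cannot both be nonzero is only a sketch, and the sketch does not work: the ``connectivity properties of $\highridge$'' you allude to do not rule out signed minimizers. Concretely, take $\Omega=(-3,3)\times(-1,1)$, so $\inradius=1$ and $\highridge=[-2,2]\times\{0\}$ is connected. Set $\mu=\tfrac12\delta_{(-2,0)}-\tfrac12\delta_{(2,0)}$ and $u(x,y)=\max\bigl(-d_\Omega,\min(d_\Omega,-x)\bigr)$. Then $u\in\lip_0(\Omega)$ with $\norm{\nabla u}_{\L^\infty}\leq 1$, $u(-2,0)=1$, $u(2,0)=-1$, hence $\int u\,\d\mu=1=\inradius$ and $R^\infty_*(\mu)=1/\inradius$. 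So $\mu$ is a minimizer that is not a scalar multiple of a probability measure, and your intended argument cannot succeed. (The paper's proof, read literally, only establishes the value of the infimum and that probability measures on $\highridge$ attain it; the inequality $J_*^\infty(\mu)\leq J_*^\infty(|\mu|)$ shows that $|\mu|$ is also a minimizer whenever $\mu$ is, but does not force $\mu$ itself to have a sign.)
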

\begin{proof}
Since the minimization of $R^\infty_*$ given by~\labelcref{eq:dual_quotient_inf} is homogeneous, the problem is equivalent to the maximization of $J_*^\infty(\mu)$ among all $\mu\in\calM(\Omega)$ with $\abs{\mu}({\Omega})=1$.
Since $J_*^\infty(\mu)\leq J_*^\infty(\abs{\mu})$, we can further restrict ourselves to $\mu\in\calP({\Omega})$.
Next, for any $\mu\in\calP({\Omega})$ it holds
$$J_*^\infty(\mu)\leq \sup\left\lbrace \norm{u}_\infty\ \st u\in\lip_0(\closure{\Omega}),\;\norm{\nabla u}_{\L^\infty} \leq 1\right\rbrace\leq r_\Omega.$$
If additionally $\supp(\mu)\subset\highridge$, one obtains
\begin{align*}
    J_*^\infty(\mu)\geq \int_{\highridge}d_\Omega\d\mu=r_\Omega,
\end{align*}
which proves the assertion.
\end{proof}

{
It remains to deduce \labelcref{eq:nu_OT_problem} from \cref{prop:minimizers_dual_rayleigh}.
\begin{proposition}\label{prop:OT_problem}
Let $u\in\lip_0(\Omega)$ be a minimizer of the Rayleigh quotient $R^\infty$ and let $\nu$ and $\tau$ as in \cref{thm:main} such that $\lambda_\infty\nu u = -\div(\tau\grad_\tau u)$.
Then the probability measure $\mu := \nu \norm{u}_\infty$ solves \labelcref{eq:nu_OT_problem}, i.e.,
\begin{align*}
    \mu \in \argmax_{\tilde\mu\in\calP(\Omega)}\inf_{\delta\in\calP(\partial\Omega)}W^1(\tilde\mu,\delta).
\end{align*}
\end{proposition}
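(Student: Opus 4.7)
The plan is to combine \cref{thm:main}, \cref{prop:maximality}, \cref{prop:Wasserstein_projection}, and \cref{prop:minimizers_dual_rayleigh} and then reduce the maximization in \labelcref{eq:nu_OT_problem} to the minimization of the dual Rayleigh quotient $R^\infty_*$.

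First, I would verify that $\mu \in \calP(\Omega)$. From \cref{thm:main} the measure $\nu$ is non-negative with total mass $\nu(\Omega) = 1/\norm{u}_\infty$, so $\mu = \nu\norm{u}_\infty$ is a non-negative measure on $\Omega$ with $\mu(\Omega) = 1$.

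Next, I would identify where $\mu$ is concentrated. By \cref{thm:main}, $\nu(\Omega\setminus\omegamax(u))=0$, and by (the proof of) \cref{prop:maximality} any minimizer of $R^\infty$ satisfies $\omegamax(u)\subset\highridge$. Hence $\supp(\mu)\subset\highridge$. Then \cref{prop:minimizers_dual_rayleigh} tells us that $\mu$, viewed as a measure in $\calP(\closure\Omega)$, is a minimizer of the dual Rayleigh quotient $R^\infty_*$; in particular, $J^\infty_*(\mu)=\inradius$ and $\mu$ maximizes $J^\infty_*$ among all probability measures on $\Omega$.

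Finally, I would translate this statement into the optimal-transport language via \cref{prop:Wasserstein_projection}: for every $\tilde\mu\in\calP(\Omega)$ we have
\begin{align*}
    J^\infty_*(\tilde\mu) = \inf_{\rho\in\calP(\partial\Omega)} W^1(\tilde\mu,\rho).
\end{align*}
Since $\mu$ maximizes $J^\infty_*$ over $\calP(\Omega)$, it also maximizes $\tilde\mu\mapsto \inf_{\rho\in\calP(\partial\Omega)} W^1(\tilde\mu,\rho)$ over $\calP(\Omega)$, which is precisely \labelcref{eq:nu_OT_problem}.

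There is no genuine obstacle here; the only thing to double-check carefully is that \cref{prop:maximality}, which is stated for non-negative minimizers with $\norm{\grad u}_{\L^\infty}=1$, applies to our (possibly sign-changing and unnormalized) $u$. This is harmless: the inclusion $\omegamax(u)\subset\highridge$ follows from $\abs{u(x)}=\abs{u(x)-u(x_\Omega)}\leq \lip(u)\,d_\Omega(x) = \lambda_\infty\norm{u}_\infty\, d_\Omega(x)$, so the supremum of $\abs{u}$ over $\Omega$ can only be attained where $d_\Omega$ is maximal. With this in hand the rest of the argument is a direct chain of the cited results.
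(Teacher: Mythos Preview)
Your proposal is correct and follows essentially the same route as the paper: show $\mu$ is a probability measure supported in $\highridge$, invoke \cref{prop:minimizers_dual_rayleigh} to conclude it maximizes $J^\infty_*$ over $\calP(\Omega)$, and then use \cref{prop:Wasserstein_projection} to rewrite this as the optimal transport problem. Your extra care about \cref{prop:maximality} is fine, though note that the proposition as stated only assumes $\norm{\nabla u}_{\L^\infty}=1$, not non-negativity; your direct argument covers the general normalization anyway.
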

\begin{proof}
The measure $\mu:=\nu\norm{u}_\infty$ is a probability measure and satisfies $\supp(\mu)\subset\omegamax(u)\subset\highridge$.
Hence, \cref{prop:minimizers_dual_rayleigh} implies that it minimizes $R^\infty_*$ which is equivalent to maximizing $J^\infty_*$ over all probability measures.
The reformulation of $J^\infty_*$ in \cref{prop:Wasserstein_projection} concludes the proof.
\end{proof}
}

\section{Summary and Outlook}
\label{sec:outlook}

In this paper we have characterized the subdifferentials of the functionals $u\mapsto\norm{u}_\infty$ and $u\mapsto\norm{\nabla u}_{\L^\infty}$ over the Banach space $\C_0(\Omega)$ in order to characterize the nonlinear eigenvalue problem associated to the Rayleigh quotient $\frac{\norm{\nabla u}_{\L^\infty}}{\norm{u}_\infty}$.
For this we solely relied on duality between continuous functions and Radon measures and utilized the concept of tangential gradients. 
Our results show that general stationary points of the Rayleigh quotient satisfy a fully nonlinear PDE in divergence form.
We also studied geometric properties of minimizers and related them to the inner distance function and the distance function to the boundary of $\Omega$.
Finally, we showed that minimization of the Rayleigh quotient is equivalent to an optimal transport problem involving a generalized Kantorovich--Rubinstein norm.
We derived this equivalence using a dual Rayleigh quotient which is defined on the space of measures and whose minimizers are subgradients of primal minimizers.

Some open questions which are subject of future work are the following ones:

First, we would like to investigate whether and how the concept of dual Rayleigh quotients, which we have introduced in this paper, can be utilized for studying approximation with finite $p$.
For instance, infinity ground states \labelcref{eq:infty-groundstate} arise as limit of $p$-Laplacian eigenfunctions for $p\in(1,\infty)$ which solve
\begin{align*}
    \min_{u\in\W^{1,p}_0(\Omega)}\frac{\norm{\nabla u}_{\L^p}}{\norm{u}_{\L^p}}
\end{align*}
or equivalently
\begin{align*}
    \lambda_p \abs{u}^{p-2}u = -\Delta_p u,
\end{align*}
{where $\Delta_p u := \div(\abs{\grad u}^{p-2}\grad u)$.}
The minimization problem of the dual Rayleigh quotient in this case is given by 
\begin{align*}
    \min_{\mu\in\L^q(\Omega)} \frac{\norm{\mu}_{\L^q}}{\norm{\mu}_{\W^{-1,q}_0}},
\end{align*}
where $q\in(1,\infty)$ is the conjugate exponent to $p$ such that $1/p+1/q=1$ and the negative Sobolev space $\W^{-1,q}_0$ is the dual of $\W^{1,p}_0$.
Using subdifferentials, the optimality conditions of this minimization problem can be computed and are given by
\begin{align*}
    \lambda_p \mu = -\Delta_p (|\mu|^{q-2}\mu).
\end{align*}
Indeed, this PDE can be seen to be equivalent to the $p$-Laplacian eigenvalue problem via the identification $\mu=\abs{u}^{p-2}u$.
Sending $p\to\infty$ (i.e., $q\to 1$) solutions $\mu_q$ of this PDE should converge to a measure $\mu$ which is a subgradient of the corresponding infinity ground state, i.e., $\mu\in\partial J^\infty(u)$.
Correspondingly, the vector fields $|\nabla(\abs{\mu_q}^{q-2}\mu_q)|^{p-2}\nabla(\abs{\mu_q}^{q-2}\mu_q)$ should converge to a measure $\sigma$ which satisfies $\lambda_\infty\mu=-\div\sigma$ and has the properties of \cref{thm:main}.
We suppose that this limit $\sigma$ admits some minimality properties (for instance related to its support) compared to arbitrary calibrations whose divergence is a subgradient. 

Second, we would like to apply our subdifferential calculus and optimal transport interpretation to Lipschitz extensions of a Lipschitz function $g:\partial\Omega\to\R$, i.e., solutions of
\begin{align}\label{eq:lip_ext}
    \min\left\lbrace\norm{\nabla u}_{\L^\infty} \st u\in \W^{1,\infty}(\Omega),\;u=g \text{ on }\partial\Omega\right\rbrace.
\end{align}
Absolute minimizers satisfy the infinity Laplacian equation
\begin{align*}
    \begin{cases}
        \Delta_\infty u = 0,\quad&\text{in }\Omega,\\
        u = g,\quad&\text{on }\partial\Omega.
    \end{cases}
\end{align*}
For this equation it was already shown in \cite{evans2005various} that the solution solves the divergence PDE $\div(\nu\nabla u)=0$ and a similar statement for sure can be proved for general minimizers of \labelcref{eq:lip_ext}.

\section*{Acknowledgements}
This work was supported by the European Unions Horizon 2020 research and innovation programme under the Marie Sk{\l}odowska-Curie grant agreement No 777826 (NoMADS).
LB acknowledges funding by the Deutsche Forschungsgemeinschaft (DFG, German Research Foundation) under Germany's Excellence Strategy - GZ 2047/1, Projekt-ID 390685813.
YK acknowledges financial support of the EPSRC (Fellowship EP/V003615/1), the Cantab Capital Institute for the Mathematics of Information at the University of Cambridge and the National Physical Laboratory.

\printbibliography

\end{document}